\newtheorem{theorem}{Theorem}[section]
\newtheorem{proposition}[theorem]{Proposition}
\newtheorem{definition}[theorem]{Definition}
\newtheorem{corollary}[theorem]{Corollary}
\newtheorem{remark}[theorem]{Remark}
\newfont{\bb}{msbm10 at 10pt}
\newfont{\bbp}{msbm10 at 8pt}
\def\b{\hbox{\bb B}}
\def\r{\hbox{\bb R}}
\def\R{\hbox{\bb R}}
\def\h{\hbox{\bb H}}
\def\s{\hbox{\bb S}}
\def\S{\hbox{\bb S}}
\def\man{\mathcal{M}}
\newcommand{\metL}[2]{\ll #1,#2 \gg }
\newcommand{\meta}[2]{\langle #1,#2 \rangle }
\newcommand{\abs}[1]{\left\vert #1 \right\vert}
\newcommand{\set}[1]{\left\{#1\right\}}
\newcommand{\ov}[1]{\overline{#1}}
\newcommand{\De}{\Delta}
\begin{document}

\title[Escobar's type Theorems]{Escobar's Type Theorems for elliptic fully nonlinear degenerate equations}

%    Information for first author
\author[Abanto]{D. P. Abanto}
\address{Instituto Nacional de Matem\'atica Pura e Aplicada, Rio de Janeiro - Brazil}
\email{dimas@impa.br}

%    Information for second author
 \author[Espinar]{J.  M. Espinar}
 \address{Instituto Nacional de Matem\'atica Pura e Aplicada, Rio de Janeiro - Brazil}
 \email{jespinar@impa}
% \thanks{Support information for the second author.}

\thanks{The authors were partially supported by CNPq-Brazil. The second author is partially supported is partially supported by Spanish MEC-FEDER Grant
MTM2013-43970-P; CNPq-Brazil Grants 405732/2013-9 and 14/2012 - Universal, Grant
302669/2011-6 - Produtividade; FAPERJ Grant 25/2014 - Jovem Cientista de Nosso Estado.}

%    General info
\subjclass[2010]{Primary 53Cxx, 58Jxx; Secondary 35Pxx.}

\date{\today}

%\dedicatory{This paper is dedicated to our advisors.}

\keywords{Conformal equations, Yamabe problem with boundary, Escobar Problem,  hyperbolic gauss map}

\begin{abstract}
{In this paper we prove non-existence and classification results for elliptic fully nonlinear elliptic degenerate conformal equations on certain subdomains of the sphere with prescribed constant mean curvature along its boundary. We also consider non-degenerate equations. Such subdomains are the hemisphere (or a geodesic ball in $\mathbb{S}^m$), punctured balls and annular domains.

Our results extend those of Escobar in \cite{Esc} when $m\geq 3$, and Hang-Wang in \cite{HaWa} and Jimenez in \cite{J} when $m=2$.}
\end{abstract}

\maketitle

% text
%%%%%%%%%%%%%%%%%%%%%%%%%%%%%%%%%%%%%%%%%%%
\section{Introduction}\label{intro}

Let $(\man^n, g_0)$ be a compact orientable Riemannian manifold with smooth boundary and dimension $n\geq 3$. Let us denote by  $R(g_0)$  its scalar curvature and by $h(g_0)$ its boundary mean curvature with respect to the outward unit normal vector field. If $g=u^{\frac{4}{n-2}}g_0$ is a  metric  conformal to $g_0$ then its scalar curvature and
boundary mean curvature  are related by the following nonlinear elliptic partial differential equation of  critical Sobolev exponent in terms of the positive function $u$
\begin{equation}\label{pde}
   \left\{ \begin{array}{lrc}
  \De_{g_0} u - \frac{n-2}{4(n-1)}R(g_0) u +
  \frac{n-2}{4(n-1)}R(g)u^{\frac{n+2}{n-2}}=0 &  \textrm{ in } \man, \\ \\
  \frac{\partial u}{\partial \eta}+\frac{n-2}{2}h(g_0) u -
  \frac{n-2}{2}h(g)u^{\frac{n}{n-2}}=0 & \textrm{ on } \partial \man ,
   \end{array}  \right.
  \end{equation}
where $\Delta_{g_0}$ is the Laplacian with respect to the metric $g_0$ and $\eta$ is the outward unit normal vector field along $\partial \man$. The problem of existence of solutions of (\ref{pde}) when $R(g)$ and $h(g)$ are constants is referred as the \index{Yamabe problem}\emph{Yamabe problem} which was completely solved when $\partial \man = \emptyset$ in a sequence of works, beginning with H. Yamabe himself ~\cite{Yam}, followed by N. Trudinger ~\cite{Tru} and T. Aubin ~\cite{Aub}, and finally by R. Schoen ~\cite{Scho}.

When the manifold $(\man ,g_0)$ is complete but not compact, the existence of a conformal metric solving the Yamabe Problem does not hold in general, as we can see in the work of Zhiren \cite{Zhi}.

In the case $\man$ is compact with nonempty  boundary  almost all the cases were solved by the works of J. Escobar  ~\cite{Esc,Esc2,Esc3,Esc4},  continued by F. Marques ~\cite{Ma} among others. We will also refer to this problem as the {\it Escobar Problem}.

By far the most important is the case where $\man$ is the closed unit Euclidean ball or equivalently,  the closed hemisphere $\ov{\mathbb S^n_+}$ endowed with the standard round metric $g_0$ and $R(g)$ is a constant. Regarding to the existence of solutions, J. Escobar proved the following

\begin{quote}
{\bf Theorem \cite{Esc4}.} {\it Let $ \Omega \subset \r^{m}$, $m > 6$, be a bounded domain with smooth boundary. There exists a smooth metric g conformally related to the Euclidean metric such that the scalar curvature of g is zero and the mean curvature of the boundary with respect to the metric g is (positive) constant.}
\end{quote}

Also, J. Escobar proved:

\begin{quote}
{\bf Theorem \cite{Esc2}.} {\it Any bounded domain in the Euclidean space $\r^{m}$, with smooth boundary and $m \geq 3$, admits a metric conformal to the Euclidean metric having (non-zero) constant scalar curvature and minimal boundary.}
  \end{quote}

Regarding to the classification of solutions to the Escobar Problem, in the case that $(\man ,g_0)$ is the closed Euclidean ball $\overline{\b ^{m}}$, J. Escobar showed that the solution to the Yamabe Problem with Boundary must have constant sectional curvatures. Also, he proved that the space of solutions in the Euclidean ball is empty when the scalar curvature is zero and the mean curvature is a non-positive constant \cite{Esc}.

The existence of solutions to the Yamabe problem on non-compact manifolds $(\man,g_{0})$ with compact boundary was proved for a large class of manifolds in the work of F. Schwartz \cite{Schwa}. He proved that the Riemannian manifolds that are \textit{positive} and their ends are \textit{large} have a conformal metric of zero scalar curvature and constant mean curvature on its boundary (see \cite{Schwa} for details). Even more, F. Schwartz proved the following:
\begin{quote}
{\bf Theorem \cite{Schwa}.} {\it
Any smooth function $f$ on $\partial \man$ can be realized as the mean curvature of a complete scalar flat metric conformal to $g_{0}$.}
\end{quote}

The Yamabe Problem opened the door to a rich subject in the last few years: \emph{the study of conformally invariant equations}. More precisely, \emph{given a smooth functional $f(x_{1} , \ldots , x_{m})$, does there exist a conformal metric $g = e^{2\rho}g_0$ in $\man$ such that the eigenvalues $\lambda _i$ of its Schouten tensor satisfy
$ f(\lambda _{1} , \ldots , \lambda _{m} ) = c  \, \text{ in } \man ?$}

Given a Riemannian manifold $(\man ^m, g)$, $m \geq 3$, the Schouten tensor of $g$ is given by
$$ {\rm Sch}(g) := \frac{1}{m-2}\left( {\rm Ric}(g) -
\frac{R(g)}{2(m-1)}g\right)$$where ${\rm Ric}(g)$ and $ R(g)$ are the Ricci tensor and the scalar curvature function of $g$ respectively.

Note that $f(x_{1} , \ldots , x_{m} ) = x_{1} +\cdots + x_{m}$ reduces to the Yamabe Problem. It is of special interest to consider $f(\lambda )\equiv \sigma _k (\lambda )^{1/k}$, $\lambda=(\lambda_{1},\ldots,\lambda_{m})$, where $\sigma _k (\lambda )$ is the $k-$th elementary symmetric polynomial of its arguments $\lambda_{1},\ldots,\lambda_{m}$ and set it to be a constant, i.e.,
$ \sigma _k (\lambda ) = {\rm constant}$, such problem is known as the $\sigma _k - $Yamabe Problem. This is an active research topic and has interactions with other fields as Mathematical General Relativity \cite{Bra,Gurs}.

Interesting problems arise in this context of conformally invariant equations. One of them is the classification of complete conformal metrics satisfying a Yamabe type equation on a subdomain of the sphere, in the line of  Y.Y. Li and collaborators \cite{YLi06,YLi,LiLi1,LiLi2,LiLi3,YLiLNgu14}. Also, it is interesting to find non-trivial solutions of conformal metrics on subdomains of the sphere prescribing the scalar curvature in the interior, or other elliptic combination of the Schouten tensor, and the mean curvature of the boundary. Such problem is related to the Min-Oo conjecture when we consider the scalar curvature inside. S. Brendle, F.C. Marques and A. Neves \cite{SBreFMarANev10} showed the existence of such non-trivial metric in the hemisphere, however such metric is not conformal to the standard one. In other words, could one find conditions on the interior and the boundary that imply that such conformal metric is unique (see \cite{YLiLNgu14,Spie})? In this work we will focus in the case $\man $ is a subdomain of the $m-$dimensional sphere $\s^{m}$.

Let us explain in more detail the meaning of a fully non-linear conformally invariant elliptic equation. Originally, these type of equations are second order elliptic partial differential equations in $\r^{m}$. The problem is to find a function $u>0$ satisfying an identity of the type
\begin{equation*}
  \mathcal{F}(\cdot,u,\nabla u, \nabla^{2} u)=c,
\end{equation*}where $c$ is a constant.

Such kind of equation is called {\it conformally invariant} if for all M\"{o}bius transformation $\psi$ in $\r^{m}$ and any positive function $u\in C^{2}\left( \r^{m} \right)$, it holds
\begin{equation}\label{ref}
  \mathcal{F}\left(\cdot,u_{\psi},\nabla u_{\psi},\nabla^{2} u_{\psi} \right)=\mathcal{F}\left( \cdot,u,\nabla u,\nabla^{2} u \right)\circ\psi,
\end{equation}
where $u_{\psi}$ is defined by
\begin{equation*}
  u_{\psi}:=\left|J\psi \right|^{\frac{m-2}{2m}}u\circ\psi,
\end{equation*}and $J\psi$ is the Jacobian of $\psi$. For more details see \cite{LiLi1}.

One can check that if there is a smooth positive function $u:\mathbb{R}^{m}\to\mathbb{R}$ such that
\begin{equation*}
  \mathcal{F}\left( \cdot,u,\nabla u,\nabla^{2} u \right)=c,
\end{equation*}
then, from (\ref{ref}), we have that
\begin{equation*}
  \mathcal{F}\left(\cdot,u_{\psi},\nabla u_{\psi},\nabla^{2} u_{\psi} \right)=c
\end{equation*}for any M\"{o}bius transformation in $\r^{m}$.

Aobing Li and YanYan Li proved a fundamental relation between solutions of this type of equations and the eigenvalues of the Schouten tensor of a conformal metric related to such solution. Specifically:
\begin{quote}
{\bf Theorem \cite{LiLi1}.} {\it Let $\mathcal{F}(\cdot,u,\nabla u,\nabla^{2}u)$ be conformally invariant on $\r^{m}$. Then
  \begin{equation*}
  \mathcal{F}\left( \cdot,u,\nabla u,\nabla^{2} u \right)= \mathcal{F}\left( 0,1,0,-\frac{m-2}{2}A^{u} \right),
    \end{equation*}
    where
    \begin{equation*}
    A^{u} := -\frac{2}{m-2}u^{-\frac{m+2}{m-2}}\nabla^{2} u+ \frac{2m}{(m-2)^{2}} u^{-\frac{ 2m}{m-2}}\nabla u\otimes \nabla u -\frac{2}{(m-2)^{2}} u^{- \frac{2m}{m-2} }\left|\nabla u\right|^{2}I
    \end{equation*}
     and I is the $m\times m$ identity matrix. Moreover, $\mathcal{F}(0,1,0,\cdot)$ is invariant under orthogonal conjugation, i.e.,
     \begin{equation*}
     \mathcal{F}\left( 0,1,0,-\frac{m-2}{2} O^{-1}AO \right)=\mathcal{F}\left( 0,1,0,-\frac{m-2}{2} A \right) \quad \forall A\in\mathcal{S}^{m\times m},\ O\in O(m),
     \end{equation*}}
     where $\mathcal{S}^{m\times m}$ is the set of $m\times m$ symmetric matrices .
\end{quote}

Thus the behavior of $\mathcal{F}\left( \cdot,u,\nabla u,\nabla^{2} u \right)$ is determined by the matrix $A^{u}$, such matrix is nothing but the Schouten tensor of the conformal metric $g= u^{\frac{4}{n-2}}g_{Eucl}$. Then, in order to define a {\it conformally invariant equation}, we use functions $F\in C^{1}\left( U\right)\cap \in C^{0}\left( \ov{U} \right)$, where $U$ is an open subset of $\mathcal{S}^{m\times m}$, such that the following conditions hold:
\begin{enumerate}
  \item for all $O\in O(m)$: $O^{-1}AO\in U$ for all $A\in U$,
  \item for all $t>0$: $tA\in U$ for all $A\in U$,
  \item for all $P\in\mathcal{P}$: $P\in U$, where $\mathcal{P}\subset\mathcal{S}^{m\times m}$ is the set of $m\times m$ positive definite symmetric matrices,
  \item for all $P\in \mathcal{P}$: $A+P \in U$ for all $A\in U$,
  \item $0\in\partial U$.
\end{enumerate}

Also, the second order differential equation will be {\it elliptic} if the function $F$ satisfies
\begin{enumerate}
  \item for all $O\in O(m)$: $F\left(O^{-1}AO\right)=F(A)$ for all $A\in U$,
  \item $F>0$ in $U$,
  \item $F|_{\partial U}=0$,
  \item for every $M\in U$:
  \begin{equation*}
    \left( \frac{\partial F}{\partial M_{ij}}\right)\in \mathcal{P}.
  \end{equation*}
\end{enumerate}

The above conditions on $(F,U)$ allow us to simplify the function to a functional acting on the eigenvalues of the Schouten tensor, i.e., on the eigenvalues of $A^u$. In order to make this explicit, let us define the following subsets:
\begin{align*}
  \Gamma_{m} =& \{x\in\r^{m}: x_{i}>0, i=1,\ldots,m\}, \\
  \Gamma_{1}= & \left\{ x\in\r^{m} : x_{1}+\cdots+x_{m}>0\right\} .
\end{align*}

Let $\Gamma\subset\r^{m}$ be a symmetric open convex cone and  $f\in C^{1}\left(\Gamma\right)\cap C^0\left(\ov{\Gamma}\right)$ such that
\begin{enumerate}
  \item $ \Gamma_{m} \subset \Gamma \subset \Gamma_{1} $,
  \item $f$ is symmetric,
  \item $f>0$ in $\Gamma$,
  \item $f|_{\partial\Gamma}=0$,
  \item $f$ is homogeneous of degree 1,
  \item for all $x\in\Gamma$ it holds $\nabla f(x)\in\Gamma_{m}$.
\end{enumerate}

Now, we will see how to obtain the open set $U\subset\r^{m}$ and the function $F:\ov{U}\to\r$ satisfying the above properties from the data $\Gamma $ and $f$. The pair $(f, \Gamma)$ is called {\it elliptic data}. From $f:\ov{\Gamma}\to\mathbb{R}$ we define
\begin{equation*}
  U=\left\{ A\in\mathcal{S}^{m\times m}: \lambda(A)\in\Gamma \right\},
\end{equation*} where $\lambda(A)=(\lambda_{1},\ldots,\lambda_{m})$ are the eigenvalues of $A$. Since $\Gamma$ is symmetric it is well defined. Also we define
\begin{equation*}
  F(A)=f(\lambda(A)).
\end{equation*}

Observe that the function $F:U\to\mathbb{R}$ is in $C^{1}(U)$ and it can be continuously extended to $\ov{U}$ such that $F|_{\partial U}=0$. Then, this function $F:\ov{U}\to\r$ and the set $U$ satisfy the properties listed above.

Hence, the problem with elliptic data $(f, \Gamma )$ for conformal metrics in a domain $\Omega\subset\s^{m}$ is to find a conformal metric $g=e^{2\rho}g_{0}$ to the standard metric $g_{0}$ such that
\begin{equation*}
  f(\lambda (g))=c \quad\text{in}\quad\Omega ,
\end{equation*}where $\lambda (g)=(\lambda_1,\ldots, \lambda_m )$ is composed by the eigenvalues of the Schouten tensor of $g=e^{2\rho}g_{0}$ and $c$ is a constant. We can also distinguish two cases, when $c>0$ is a positive constant, without loss of generality we can consider $c=1$, the problem is called \textit{non-degenerate}. When the constant satisfies $c=0$, the problem is called \textit{degenerate}.

By the stereographic projection, any domain of $\r ^m$ corresponds to a domain in $\s ^m$. Moreover, the stereographic projection is conformal, hence, any conformal equation in a domain of $\r ^m$ can be seen as a conformal equation in the corresponding domain in $\s ^m$, and vice-versa. Therefore, henceforth we will consider conformally invariant equations in subdomains of the sphere $(\s ^m , g_0)$ endowed with its standard metric.

Now, take $g=e^{2\rho}g_0$  in  $\Omega \subseteq \mathbb{S}^m$. The Yamabe problem for $R(g)=1$  and $h(g)=c$, where $h(g)$ is the boundary mean curvature with respect to the outward unit normal vector field, is equivalent to find a smooth function $\rho$ on $\Omega$ such that

\begin{equation}\label{yamabe2}
\left\{ \begin{array}{lcl}
  \lambda_1 + \dots + \lambda_m = \frac{1}{2(m-1)} ,  &\text{ in }  &\Omega , \\
h(g)=c , & \text{ on } & \partial \Omega.
\end{array}\right.
\end{equation}

Posed in this form, (\ref{yamabe2}) can be generalized to other functions of the eigenvalues of the Schouten tensor. For instance, one may consider the $\sigma_k$-Yamabe problem on $\mathbb S^m_+$  considering the  $k$-symmetric function of the eigenvalues of the Schouten tensor \cite{Sheng,Sheng2}. In this work we are interested in the fully nonlinear case of this problem, in the line opened by A. Li and Y.Y. Li \cite{LiLi1,LiLi2,LiLi3}. Namely, given $(f,\Gamma)$  an elliptic data and, $b \geq 0$ and  $c\in \r$, find $\rho \in C^{2,\alpha} (\overline {\mathbb{S}^m_+})$ so that $g=e^{2\rho}g_0 $ is a solution of
\begin{equation}\label{p1}
\left\{\begin{array}{rl}
f(\lambda(g))= b ,& \lambda(g)\in\Gamma \text{ in } \mathbb{S}^m_+,\\ \\

h(g) = c	, & \textrm{ on } \partial \mathbb{S}^m_+ .
\end{array} \right.
\end{equation}

M.P. Cavalcante and J.M. Espinar \cite{CE} have shown by geometric methods that
\begin{quote}
{\bf Theorem \cite{CE}.} {\it If $g=e^{2\rho}g_{0}$ is a conformal metric in $\overline{\s^{m}_{+}}$ that satisfies
  \begin{equation*}
    \left\{
     \begin{array}{rll}
         f(\lambda (g))&=1, & \text{in}\quad\s^{m}_{+}, \\
         h(g)&=c ,& \text{on}\quad\partial \s^{m}_{+},
     \end{array}
    \right.
  \end{equation*}then, there is a conformal diffeomorphism $\Phi:\s^{m}\to\s^{m}$, preserving $\s^{m}_{+}$, such that $g=\Phi^{*}\left( g_{0}   {\big|}_{\overline{\s^{m}_{+}}}\right)$.}
\end{quote}

Using analytic methods, A. Li and Y.Y. Li \cite{LiLi3} proved the result above.
Nevertheless, M.P. Cavalcante and J.M. Espinar went further and they dealt with annular domains, as J. Escobar did \cite{Esc} for the scalar curvature, in the fully nonlinear elliptic case. Let us denote by ${\bf n} \in \mathbb{S}^m_+ \subset \mathbb{S}^m$ the north pole and let $r < \pi/2$. Denote by $ B_{r} ( {\bf n} ) $ the geodesic ball in $\mathbb{S}^n$ centered at ${\bf n} $ of radius $r$. Note that, by the choice of $r$, $\partial \mathbb{S}^m_+ \cap \partial B_{r} ( {\bf n} ) = \emptyset$.

Denote by $\mathbb A (r)= \mathbb{S}^m_+ \setminus \overline{ B_{r} ( {\bf n} )}$ the annular region determined by $\mathbb{S}^m_+$ and $ B_{r} ({\bf n} )$. Note that the mean curvature of $\partial  B_{r} ({\bf n}) $ with respect to $g_0$ and the inward orientation along $\partial \mathbb A (r) $ is a constant $h(r)$ depending only on $r$. Let us consider the problem of finding a conformal metric in $\mathbb A (r)$ satisfying an elliptic condition in the interior and whose boundary components, $\partial  B_{r} ( {\bf n} ) $ and $\partial \mathbb{S}^m_+$, are minimal.

In other words, given $(f, \Gamma)$ an elliptic data, find $\rho \in C^\infty (\mathbb A (r))$ so that the metric $g=e^{2\rho}g_0 $ satisfies
\begin{equation}\label{p2}
\left\{\begin{array}{rl}
f(\lambda(g))= 1,& \quad\text{in}\quad \mathbb{A}(r),\\ \\

h(g)= 0, & \quad\textrm{on}\quad \partial B_{r} ({\bf n} )\cup \partial \mathbb{S}^m_+ .
\end{array} \right.
\end{equation}

In the above situation, M.P. Cavalcante and J.M. Espinar obtained:
\begin{quote}
{\bf Theorem \cite{CE}.} {\it Let $\rho \in C^{2,\alpha} (\overline{\mathbb{A}(r)})$ be a solution of \eqref{p2}. Then, $g= e^{2\rho}g_0$ is rotationally symmetric metric in $\overline{\mathbb{A}(r)}$. }
\end{quote}

This work is organized as follows. To facilitate access, the sections are rendered as self-contained as possible. In Section \ref{SectLocal}, we first review the local relationship between horospherically concave hypersurfaces in $\h^{m+1}$ and conformal metrics in $\s^{m}$. We begin by giving the definition of the Hyperbolic Gauss map for an oriented immersed hypersurface in $\mathbb{H}^{m+1}$. In such definition, we use the boundary at infinity of the Hyperbolic space, also called ideal boundary of $\mathbb{H}^{m+1}$, that is, the sphere $\s^{m}$. There are sufficient and necessary conditions for the hyperbolic Gauss map to be a local diffeomorphism. One of these conditions is related to the regularity of the light cone map of an oriented hypersurface. Other conditions are related to the principal curvatures of the given oriented hypersurface. Then, we define one of the important objects in our study, horospherically concave hypersurfaces in $\mathbb{H}^{m+1}$. These hypersurfaces are oriented and they have the property that its Hyperbolic Gauss map is a local diffeomorphism. The importance of this class of hypersurfaces is that, locally, we can give a conformal metric over the image of the Hyperbolic Gauss map (conformal to the standard metric $g_{0}$ in the sphere $\s^{m}$). Suppose that $g=e^{2\rho}g_{0}$ is this conformal metric, $\rho\in C^{2,\alpha}\left( \Omega\right)$, where $\Omega$ is a small open set that is contained in the image of the Hyperbolic Gauss map, the function $\rho$ has a geometric interpretation that is related to tangent horospheres to the original hypersurface. In the Poincar\'{e} ball model, $\rho$ is the signed hyperbolic distance between the tangent horosphere and the origin of the Poincar\'{e} ball model.

We have to recall now the Local Representation Theorem:

\begin{quote}
{\bf Local Representation Theorem \cite{EGM}.} {\it Let $\phi: \Omega \subseteq \s ^{m}\to \h^{m+1}$ be a piece of horospherically concave hypersurface with Gauss map $G(x)=x$. Then, it holds

\begin{equation*}
\phi = \frac{e^{\rho}}{2}\left( 1+ e^{-2\rho} \left( 1+ |\nabla\rho|^2 \right)\right) (1,x) + e^{-\rho} (0, -x +\nabla\rho).
\end{equation*}

Moreover, the eigenvalues $\lambda_i$ of the Schouten tensor of the horospherical metric $g = e^{2\rho}g_{0}$ and the principal curvatures $\kappa_i$ of $\phi$ are related by
\begin{equation*}
\lambda _i = \frac{1}{2} -\frac{1}{1+ \kappa _i} .
\end{equation*}

Conversely, given a conformal metric $g= e^{2\rho} g_{0}$ defined on a domain of the sphere $\Omega \subseteq \s ^{m}$ such that the eigenvalues of its Schouten tensor are all less than $1/2$, the map $\phi$ given above defines an immersed, horospherically concave hypersurface in $\h ^{m+1}$ whose Gauss map is $G(x)=x$ for $x\in \Omega$ and whose horospherical metric is the given metric $ g$.}
\end{quote}

\begin{remark}
In the above Local Representation Theorem we are using the Hyperboloid Model for $\h ^{m+1}$. However, we will use along this work other models for $\h ^{m+1}$ as the Poincar\'{e} Model or the Klein Model.
\end{remark}

By the Local Representation Theorem, one can see that the function $\rho$ is all we need to recover the original hypersurface. Such theorem is of great importance because we can obtain horospherically concave hypersurfaces with injective Gauss map from conformal metrics defined in domains $\Omega$ of the sphere $\s^{m}$ if we impose certain conditions. Such conformal metric is called the horospherical metric of the horospherically concave hypersurface in $\mathbb{H}^{m+1}$.

Hence, given a subdomain $\Omega\subset \mathbb{S}^{m}$ and $\rho\in C^{2,\alpha}\left(\Omega\right)$, consider the conformal metric $g=e^{2\rho}g_{0}$, then the question is: what can we say about the hypersurface given by the representation formula? It is known (cf. \cite{BEQ,EGM}) that if we impose certain conditions on the given conformal metric $g=e^{2\rho}g_{0}$, we realize a horospherically concave hypersurface with injective map Gauss. Our first result says that such horospherically concave hypersurface is proper. Specifically,

\begin{quote}
{\bf Theorem \ref{Ch3:proper}.} {\it Given $\rho\in C^{1}\left( \Omega \right)$, the map $\phi:\Omega\to\mathbb{H}^{m+1}$ is proper if, and only if, $ \left|\rho \right|_{1,\infty}(x)\to\infty$ when $x\to p$ for every $p\in\partial\Omega$.}
\end{quote}

Using this theorem, we can give a condition on a complete conformal metric that guaranties that the associated map is proper.

\begin{quote}\label{pro20}
\textbf{Theorem \ref{Ch3:Theo3}.} \textit{Let $g=e^{2\rho}g_{0}$ be a complete metric in $\Omega$, such that $\sigma=e^{-\rho}$ is the restriction of a continuous function that is defined in $\overline{\Omega}$. Then $\phi:\Omega\to\mathbb{H}^{m+1}$ is a proper map.}
\end{quote}

In the following, we make use of the parallel flow of a horospherically concave hypersurface, this flow is defined using the opposite to the canonical orientation of the hypersurface. More precisely, let $\eta$ be the canonical orientation along $\phi$, then for every $t>0$, we define the map $\phi_{t}:\Omega\to\h^{m+1}$ as
\begin{equation*}
  \phi_{t}(x)=\gamma\left( t,\phi(x),-\eta(x) \right)\quad\forall x\in\Omega,
\end{equation*}
where $\gamma\left( \cdot,\phi(x),-\eta(x) \right)$ is the geodesic in the Hyperbolic space $\h^{m+1}$ passing through $\phi(x)$ and has velocity $-\eta(x)$ at that point.

In fact, the map $\phi_{t}$ is a horospherically concave hypersurface in the Hyperbolic space $\h^{m+1}$ for every $t>0$. The horospherical metric of $\phi_{t}:\Omega\to\h^{m+1}$ is the conformal metric $g_{t}=e^{2t}g$, where $g$ is the horospherical metric of $\phi$. It is remarkable that the property of properness is invariant under the parallel flow (cf. Proposition \ref{Ch3:Prop5}).

Using the Local Representation Theorem we can say that the horospherically concave hypersurfaces that we get using the parallel flow correspond to dilations of the horospherical metric of the original horospherically concave hypersurface.

Another important issue about horospherically concave hypersurfaces in $\h ^{m+1}$ is embeddedness. We will see that if we impose some extra conditions on the conformal metric, then we get embeddedness along the parallel flow.

\begin{quote}
{\bf Theorem \ref{Ch3:theo33}.} {\it Let $\rho\in C^{2,\alpha}\left(\Omega\cup \mathcal{V}_{1}\right)$ be such that $\sigma=e^{-\rho}\in C^{2,\alpha}\left( \Omega\cup \mathcal{V}_{1}\right)$ satisfies:
   \begin{enumerate}
     \item $\sigma\cdot\sigma$ can be extended to a $C^{1,1}$ function on $\overline{\Omega}$.
     \item $\left< \nabla\sigma,\nabla\sigma \right>$ can be extended to a Lipschitz function on $\overline{\Omega}$.
   \end{enumerate}

Then, there is $t_{0}>0$ such that for all $t>t_{0}$ the map $\phi_{t}:\Omega\cup\mathcal{V}_{1}\to\mathbb{H}^{m+1}$ associated to $\rho_{t}=\rho+t$ is an embedded horospherically concave hypersurface.}
\end{quote}

Also, it is natural to relate analytic conditions along the boundary of a complete conformal metric with boundary (cf. Definition \ref{Def:CompletMetri}) and the boundary of the associated horospherically concave hypersurface, this is our next step. If we impose the condition that the horospherical metric $g=e^{2\rho}g_{0}$ has constant mean curvature along the boundary $\partial B_{r}(p)$, then we get information about the location of the boundary of the horospherically concave hypersurface, in fact, we will see that the boundary lies in an equidistant hypersurface (Proposition \ref{Ch3:Prop10}). Moreover, using the parallel flow, we obtain that the horospherically hypersurfaces is contained in one of the components in the Hyperbolic space $\h^{m+1}$ determined by such equidistant hypersurface where its boundary is contained (cf. Theorem \ref{Ch3:prop38}). Finally, we see how the parallel flow affects to elliptic problems for conformal metrics. Hence, we will be ready to prove our main results on conformal metrics by means of horospherically concave hypersurfaces in $\h^{m+1}$.

In what follows, we will deal with degenerate and non-degenerate elliptic problems for conformal metrics on either closed balls, punctured balls or compact annuli in the sphere $\S^{m}$.

As said above, J. Escobar proved in \cite{Esc} that, for the Yamabe problem with boundary in $\ov{\b^{m}}$, if the scalar curvature is zero then the mean curvature can not be negative. In Section \ref{SectBall}, we generalize such result to fully nonlinear degenerate  conformally invariant equations, that is,

\begin{quote}
{\bf Theorem \ref{Ch4:Theo.4.4}.} {\it Let $ (f , \Gamma )$ be an elliptic data for conformal metrics and let $c\leq 0$ be a constant. Then, there is no  conformal metric $g=e^{2\rho}g_{0}$ in $\overline{\mathbb{S}^{m}_{+} }$, where $\rho\in C^{2,\alpha}\left(\overline{ \mathbb{S}^{m}_{+} }\right)$, such that
    \begin{equation*}
      \left\{
               \begin{array}{ccccl}
                 f(\lambda(g))       &=& 0 & \text{ in } & \ov{ \mathbb{S}^{m}_{+} },          \\
                 h(g) &=& c & \text{ on } & \partial \mathbb{S}^{m}_{+},
               \end{array}
      \right.
    \end{equation*}where $\lambda(g)=(\lambda_{1},\ldots,\lambda_{m})$ is composed by the eigenvalues of the Schouten tensor of $g=e^{2\rho}g_{0}$.}
\end{quote}

The previous theorem can be extended to $m$-dimensional compact, simply-connected, locally conformally flat manifolds $(\man,g_{0})$ with umbilic boundary $\partial \man$ and $R(g_{0})\geq 0 $ on $\man$, using a result of F. M. Spiegel \cite{Spie},

\begin{quote}
{\bf Theorem \ref{Ch4:Theo.4.1.3}.} \textit{Set $(f,\Gamma)$ an elliptic data for conformal metrics and $c\leq 0$ a constant. Let $(\man,g_{0})$ be a $m$-dimensional compact, simply-connected, locally conformally flat manifold with umbilic boundary and $R(g_{0})\geq 0 $ on $\man$. Then, there is no conformal metric $g=e^{2\rho}g_{0}$, $\rho\in C^{2,\alpha}\left( \man \right)$, such that
  \begin{equation*}
      \left\{
               \begin{array}{ccccl}
                 f( \lambda(g) )       &=& 0 & \text{ in } &  \man ,          \\
                 h(g) &=& c & \text{ on } & \partial \man ,
               \end{array}
      \right.
    \end{equation*}where $\lambda(g)=(\lambda_{1},\ldots,\lambda_{m})$ is composed by the eigenvalues of the Schouten tensor of the metric $g=e^{2\rho}g_{0}$.}
\end{quote}

Also, using \cite{Spie}, we can extend the result of Cavalcante-Espinar \cite{CE} to locally conformally flat manifolds. Specifically,

\begin{quote}
\textbf{Theorem \ref{Ch4:Theo.4.1.4}.} \textit{Set $(f,\Gamma)$ an elliptic data for conformal metrics and $c\leq 0$ a constant. Let $(\man,g_{0})$ be a $m$-dimensional compact, simply-connected, locally conformally flat manifold with umbilic boundary and $R(g_{0})\geq 0 $ on $\man$. If there exists a conformal metric $g=e^{2\rho}g_{0}$, $\rho\in C^{2,\alpha}\left( \man \right)$, such that
  \begin{equation*}
      \left\{
               \begin{array}{ccccl}
                 f( \lambda(g) )       &=& 1 & \text{ in } &  \man ,          \\
                 h(g) &=& c & \text{ on } & \partial \man ,
               \end{array}
      \right.
    \end{equation*}where $\lambda(g)=(\lambda_{1},\ldots,\lambda_{m})$ is composed by the eigenvalues of the Schouten tensor of the metric $g=e^{2\rho}g_{0}$, then $\man$ is isometric to a geodesic ball in the standard sphere $\s ^{m}$.}
\end{quote}

In Section \ref{punct}, we study the degenerate case in the punctured geodesic ball with minimal boundary. Observe that, up to a conformal diffeomorphism acting on $\s^m$, we can consider the punctured geodesic ball as the punctured northern hemisphere. We obtain
\begin{quote}
\textbf{Theorem \ref{Ch4:Theo.4.4.11}.}\textit{Let $g=e^{2\rho}g_{0}$ be a conformal metric in $\ov{ \s^{m}_{+} }\setminus\{{\bf n}\}$ that is solution of the following degenerate elliptic problem:
\begin{equation*}
  \left\{
               \begin{array}{cccc}
                 f(\lambda(g))       &=& 0 & \quad\text{in } \ov{ \s^{m}_{+} }\setminus\{{\bf n}\},          \\
                 h(g) &=& 0 & \quad\text{on }\partial \s^{m}_{+},
               \end{array}
      \right.
  \end{equation*}then $g$ is rotationally invariant.}
\end{quote}

Now, if we assume that there is a solution $g=e^{2\rho}g_{0}$ in $\ov{\s^{m}_{+}}\setminus\{{\bf n}\}$ of
      \begin{equation*}
      \left\{
                   \begin{array}{cccc}
                     f(\lambda(g))       &=& 0 & \quad\text{in } \ov{ \S^{m}_{+} }\setminus \{{\bf n}\} ,          \\
                     h(g) &=& 0 & \quad\text{on }\partial \S^{m}_{+},
                   \end{array}
      \right.
     \end{equation*}
such that $\sigma=e^{-\rho}$ can be extended to a $C^{2}$ function $\tilde{\sigma}$ on $\ov{ \S^{m}_{+} }$ with $\tilde{\sigma}({\bf n})=0$. Such solution is called \textit{punctured solution}. In particular, Theorem \ref{Ch4:Theo.4.4.11} says that a punctured solution is rotationally symmetric. Finally, in the non-degenerate case in the punctured closed geodesic ball, we have
\begin{quote}
{\bf Theorem \ref{Ch4:Theo.4.4.12}.} \textit{Let $g=e^{2\rho}g_{0}$ be a conformal metric in $\ov{\s^{m}_{+}}\setminus\{{\bf n}\}$ that is solution of the following non-degenerate elliptic problem:
  \begin{equation*}
  \left\{
               \begin{array}{cccc}
                 f(\lambda(g))       &=& 1 & \quad\text{in } \ov{\s^{m}_{+}}\setminus\{{\bf n}\},          \\
                 h(g) &=& 0 & \quad\text{on }\partial \s^{m}_{+}, \\
               \end{array}
      \right.
  \end{equation*}then $g$ is rotationally invariant.}
\end{quote}

Next, in Section \ref{CompactAnnulus}, we deal with degenerate problems in the compact annulus $\ov{ \mathbb{A}(r) }$, $0<r<\pi/2$. We first observe that every solution to the degenerate problem in $\ov{ \mathbb{A}(r) }$ with minimal boundary is rotationally invariant. Also, if there is a solution of such problem then it is unique up to dilations.
\begin{quote}
  \textbf{Theorem \ref{Ch4:Theo.4.3.2}.} \textit{Set $r\in(0,\pi/2)$. If there is a solution $g=e^{2\rho}g_{0}$ of the following problem
  \begin{equation*}
  \left\{
               \begin{array}{cccc}
                 f(\lambda(g))       &=& 0 & \quad\text{in } \ov{ \mathbb{A}(r)},          \\
                 h(g) &=& 0 & \quad\text{on }\partial \mathbb{A}(r),
               \end{array}
      \right.
\end{equation*}then $g$ is rotationally invariant and unique up to dilations.}
\end{quote}

Moreover, under the existence of a punctured solution, we can prove:

\begin{quote}
{\bf Theorem \ref{Ch4:Theo.4.3.5}.} \textit{Set $r\in(0,\pi/2)$. If the degenerate elliptic data $(f,\Gamma)$ admits a punctured solution, then there is no solution of the following degenerate elliptic problem:
  \begin{equation*}
  \left\{
               \begin{array}{cccc}
                 f(\lambda(g))       &=& 0 & \quad\text{in } \ov{ \mathbb{A}(r) },          \\
                 h(g) &=& 0 & \quad\text{on }\partial \mathbb{A}(r).
               \end{array}
      \right.
\end{equation*}}
\end{quote}

In Section \ref{CompleteAnnulus}, we focus on different boundary conditions on the annulus. At one boundary component we will impose mild conditions on the metric and at the other we will impose constancy of the mean curvature of the conformal metric.

Our next result will say that any conformal metric $g=e^{2\rho}g_{0}$ in $$\mathbb{A}(r,\pi/2]:= \set{x \in \s ^m \, : \, \, r < d_{\s^m}(x,{\bf n}) \leq \pi /2}$$ satisfying certain property at its end and solution of a degenerate problem with non-negative constant mean curvature on its boundary, has unbounded Schouten tensor. In other words, we establish non-existence result for degenerate (and non-degenerate) elliptic equations in $\mathbb{A}(r,\pi/2]$. Specifically,

\begin{quote}
{\bf Theorem \ref{Ch4:Theo.4.3.6}.} {\it Let $r\in(0,\pi/2)$, $c\geq0$ be a non-negative constant and $g=e^{2\rho}g_{0}$ be a conformal metric in $\mathbb{ A }\left(r,\frac{\pi}{2}\right]$ that is solution of the following degenerate elliptic problem:
\begin{equation*}
  \left\{
               \begin{array}{cccc}
                 f(\lambda(g))       &=& 0 & \quad\text{in } \mathbb{A}(r,\pi/2],          \\
                 h(g) &=& c & \quad\text{on }\partial \s^{m}_{+}.
               \end{array}
      \right.
  \end{equation*}

If $e^{2\rho}+|\nabla\rho|^{2}:\mathbb{A}(r,\pi/2]\to\r$ is proper then $\lambda(g)$ is unbounded.}
\end{quote}

In the non-degenerate case, we have,

\begin{quote}
{\bf Theorem \ref{Ch4:Theo.4.3.7}.} {\it Let $0<r<\pi/2$, $c\in\r$ be a constant and $g=e^{2\rho}g_{0}$ be a conformal metric in $\mathbb{ A }\left(r,\frac{\pi}{2}\right]$ that is solution of the following non-degenerate elliptic problem:
\begin{equation*}
  \left\{
               \begin{array}{cccc}
                 f(\lambda(g))       &=& 1 & \quad\text{in } \mathbb{A}(r,\pi/2],          \\
                 h(g) &=& c & \quad\text{on }\partial \s^{m}_{+}, \\
                 \displaystyle{\lim_{x\to q}\rho(x)} & = & +\infty & \quad\forall\: q\in\partial B_{r}({\bf n}).
               \end{array}
      \right.
  \end{equation*}

Set $\sigma=e^{-\rho}$, if $|\nabla\sigma|^{2}$ is Lipschitz then $\nabla^{2} ( \sigma^{2} )$ is unbounded.}
\end{quote}

In the last Section \ref{2dim}, we see that we can extend the definition of the Schouten tensor to conformal metrics to the standard one in domains of the sphere $\s^{2}$. So, we can extend the notion of eigenvalues of the Schouten tensor and we can also speak of elliptic problems for conformal metrics in domains of the sphere $\s^{2}$. There, we observe that the Yamabe Problem reduces to the classical Liouville Problem. Hence, fully nonlinear equations for conformal metrics in domains on $\s^{2}$ can be regarded as a generalization of the Liouville Problem and we obtain analogous results in the two dimensional case. It is remarkable that there is a solution to the Yamabe Problem on the compact annulus with zero scalar curvature and minimal boundary, however, in dimension higher does not exist such solution. In fact, Theorem \ref{Ch4:Theo.4.3.5} is an extension of Escobar's Theorem (cf. \cite{Esc}), that is, if $m\geq 3$, then the Yamabe Problem on the compact annulus does not have solution with scalar curvature equals to zero and minimal boundary, however, this result can not be extended to $m=2$.

\bigskip
\section{Hypersurfaces via conformal metrics}\label{SectLocal}

In this section we first briefly sketch how a conformal metric defined in a subdomain of the sphere gives rise to an immersion into the Hyperbolic space and its geometric consequences (cf. \cite{BEQ,BEQ2,CE,Esp,EGM}). Second, we will extend some of the results contained in the works listed above to match our purposes.

Let $(\mathbb S ^m , g_0)$ be the standard $m-$sphere. Let $ \Omega\subset \mathbb S^n$  be a relatively compact domain and $g=e^{2\rho}g_0$ be a $C^\infty$ conformal metric on $ \Omega$. Assume that
$$
{\rm Sch}_g(p)<\frac{1}{2} \text{ for all } p\in  \Omega,
$$
that is, each eigenvalue of the Schouten tensor is less than $1/2$. Observe that we only need to assume that ${\rm Sch}_g < + \infty $ since we can always achieve this condition by a dilation $g_t=e^{2t}g$.

Denote by $\mathbb L ^{m+2}$ the standard Lorentz-Minkowski space, i.e,
$\mathbb L ^{m+2} = (\r^{m+2} , \metL{}{})$, where $\metL{}{}$ is the standard Lorentzian  metric given by
$$ \metL{}{} = - dx_0 ^2 + \sum _{i=1}^{m+1} dx_i^2 .$$

In this model one can consider
\begin{equation*}
\begin{array}{rcl}
\mathbb H ^{m+1} &=& \{  x \in \mathbb L ^{m+2} \, : \, \, \metL{x}{x}=-1 , \, x_0 >0\} ,\\
\mathbb S ^{m+1}_1 &=& \{  x \in \mathbb L ^{m+2} \, : \, \, \metL{x}{x}= 1\} , \\
\mathbb N ^{m+1}_+ &=& \{  x \in \mathbb L ^{m+2} \, : \, \, \metL{x}{x}= 0 , \, x_0 >0\} ,
\end{array}
\end{equation*}that is, the Hyperbolic Space, the deSitter Space and the positive Light Cone respectively.

Following \cite{BEQ,EGM}, one can construct a representation of $( \Omega , g)$ as an immersion
$\phi: \Omega \to \mathbb H^{m+1} \subset (\mathbb{L}^{m+2} , \meta{}{})$, endowed with a canonical orientation  $\eta :\Omega  \to \mathbb{S}^{m+1}_1 \subset \mathbb L ^{m+2}$, given by

\begin{equation}\label{phi}
\phi(x) = \frac{e^\rho}{2}\big(1+e^{-2\rho}(1+ |\nabla \rho  |^2)\big)(1,x)+e^{-\rho}(0,-x+\nabla\rho)
\end{equation} and whose Hyperbolic Gauss map is given by  $G(x)=x$.  In other words, one can construct a horospherically concave hypersurface $\Sigma=\phi(\Omega)$  with boundary $\partial\Sigma = \phi(\partial \Omega)$. Here, $| \, \cdot \, |$ and $ \nabla  \rho$ represent the norm and the gradient with respect to $g_0$.

Recall that the hyperbolic Gauss map is defined as follows. Let $x \in \Omega$ be a point in our domain and consider $p := \phi (x) \in \mathbb H ^{m+1}$ and $v := -\eta (x) \in T_p \mathbb H ^{n+1}$. Then, $G : \Omega \to \mathbb S ^m $ is defined by
$$ G( x ) := \lim _{t \to + \infty} \gamma _{p,v} (t)  \in \mathbb S ^m , $$where $\gamma : \mathbb R \to \mathbb H ^{m+1}$ is the complete geodesic parametrized by arc-length in $\mathbb H ^{m+1}$ passing through $p$ in the direction $v$.

\begin{remark}
{\it Note that, from \eqref{phi}, if $\rho \in C^{k+1} (\Omega)$ then the immersion $\phi $ is $C^k$, and also, the First and Second Fundamental Forms are $C^{k-1}$. It is worth noting that we only need $\rho \in C^2 (\Omega)$.}
\end{remark}

\medskip

We recall that  an immersion is horospherically concave if and only if the principal curvatures at any point are bigger than $-1$ with respect to the prescribed orientation $\eta $ (the inward orientation for a totally umbilical sphere). Also $ g :=  \metL{d\psi}{d\psi} = e^{2\rho} g_0 $ is a Riemmanian metric, being $\psi := \phi - \eta : \Omega \to \mathbb N ^{m+1}_+$ the light cone map, and it satisfies
$$\psi = e^{\rho}(1,x), \, x\in  \Omega ,$$that is, $g$ is nothing but the First Fundamental Form of $\psi$. Moreover, the principal curvatures, $\kappa _i $, of
$\Sigma $ and the eigenvalues, $\lambda _i$, of the Schouten tensor of $g$ are related by
\begin{equation}\label{lambdakappa}
\lambda _ i = \frac{1}{2} - \frac{1}{1+ \kappa _i} .
\end{equation}

\begin{remark}
The above representation formula \eqref{phi} can be seen as the hyperbolic analog to the representation formula for convex ovaloid in $\r ^{m+1}$ (cf. \cite{Fi1,Fi2})

Note that the above representation formula is local in nature, that is, the Hyperbolic Gauss map is (locally) a diffeomorphism (onto its local image) and, therefore, one can use \eqref{phi} to (locally) parametrize any horospherically concave ovaloid.
\end{remark}

Throughout this work we will use different models of the Hyperbolic space. We recall now the representation formula \eqref{phi} in the different models:

\begin{itemize}
\item Poincar\'{e} ball model: Given $x\in \Omega$, the representation formula \eqref{phi} is
\begin{equation}\label{Poincformul}
  \varphi_{P}(x)=\frac{1-e^{-2\rho(x)}+\left|\nabla e^{-\rho}(x) \right|^{2}}{ \left( 1+e^{-\rho(x)} \right)^{2}+\left|\nabla e^{-\rho}(x) \right|^{2}}x-\frac{1}{ \left( 1+e^{-\rho(x)} \right)^{2}+\left|\nabla e^{-\rho}(x) \right|^{2}}\nabla\left( e^{-2\rho}\right)(x).
\end{equation}

\item Klein model: Given $x\in \Omega$, the representation formula \eqref{phi} is
\begin{equation*}
  \varphi(x)=\frac{1-e^{-2\rho(x)}+\left|\nabla e^{-\rho}(x) \right|^{2} }{ 1+e^{-2\rho(x)} +\left|\nabla e^{-\rho}(x) \right|^{2} }x-\frac{1}{ 1+e^{-2\rho(x)} +\left|\nabla e^{-\rho}(x) \right|^{2}}\nabla\left( e^{-2\rho}\right)(x).
\end{equation*}

Also, set $\sigma=e^{-\rho}$, then the representation formula \eqref{phi} in the Klein model can be written as
\begin{equation}\label{Kleinformul}
  \varphi _K (x)=\frac{1-\sigma^{2}(x)+\left|\nabla \sigma(x)\right|^{2}}{ 1+\sigma^{2}(x) +\left|\nabla \sigma(x)\right|^{2}}x-\frac{1}{ 1+\sigma^{2}(x) +\left|\nabla \sigma(x)\right|^{2}}\nabla\left( \sigma^{2}\right)(x).
\end{equation}
\end{itemize}

\subsection{Properness}

Here, we study how the behavior of $\phi: \Omega\to\mathbb{H}^{m+1}$ depends on $\rho$. We characterize the properness of the associated horospherically concave hypersurface in terms of the behavior of $\rho$ along the boundary.

\begin{theorem}\label{Ch3:proper}
Given $\rho\in C^{1}\left( \Omega \right)$, the map $\phi:\Omega\to\mathbb{H}^{m+1}$ is proper if, and only if, $ \left|\rho \right|^2_{1,\infty}(x)\to\infty$ when $x\to p$, for every $p\in\partial\Omega$. Here,
$$ \left|\rho \right|^2_{1,\infty}(x) = \rho ^2 (x) + \abs{\nabla\rho (x)}^2.$$
\end{theorem}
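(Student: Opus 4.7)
The plan is to reduce properness of $\phi$ to the blow-up of a single explicit scalar function extracted from the representation \eqref{phi}, after which the statement becomes a pointwise calculation.

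First, since $\ov{\Omega}$ is compact in $\s^m$, properness of $\phi:\Omega\to\h^{m+1}$ is equivalent to the condition that $\phi(x_n)$ leaves every compact subset of $\h^{m+1}$ whenever $x_n\to p\in\partial\Omega$. Working in the hyperboloid model, a sequence $\{\xi_n\}\subset\h^{m+1}\subset\l^{m+2}$ leaves every compact set if and only if its timelike coordinate $(\xi_n)_0$ tends to $+\infty$, since on the hyperboloid $\xi_0^2=1+\sum_{i=1}^{m+1}\xi_i^2$. So the theorem reduces to showing that $\phi_0(x_n)\to+\infty$ along every $x_n\to p\in\partial\Omega$ if and only if $|\rho|_{1,\infty}^2(x)\to\infty$ as $x\to p$.

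Reading the zeroth component directly off \eqref{phi} yields
\begin{equation*}
\phi_0(x)=\frac{e^{\rho(x)}}{2}\Bigl(1+e^{-2\rho(x)}\bigl(1+|\nabla\rho(x)|^2\bigr)\Bigr)=\cosh\rho(x)+\tfrac{1}{2}\,e^{-\rho(x)}\,|\nabla\rho(x)|^2,
\end{equation*}
a sum of two non-negative terms, and this identity is valid for $\rho\in C^1(\Omega)$. For the direction $(\Leftarrow)$ I would argue by contradiction: if $|\rho|_{1,\infty}^2(x)\to\infty$ but $\phi_0(x_n)$ stayed bounded along some $x_n\to p$, then $\cosh\rho(x_n)$ would be bounded, so $\rho(x_n)$ would be bounded; this forces $e^{-\rho(x_n)}$ to be bounded below by a positive constant, so the bound on the second summand yields a bound on $|\nabla\rho(x_n)|^2$, contradicting $|\rho|_{1,\infty}^2(x_n)\to\infty$. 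The direction $(\Rightarrow)$ is immediate: if $|\rho|_{1,\infty}^2(x_n)$ stays bounded along some $x_n\to p$, then both $\rho(x_n)$ and $|\nabla\rho(x_n)|$ are bounded, hence so is $\phi_0(x_n)$, contradicting properness.

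There is no real analytic obstacle; the proof is a one-line computation combined with a short case analysis. The one delicate point worth flagging is that neither summand of $\phi_0$ individually controls $|\rho|_{1,\infty}^2$: when $\rho\to+\infty$ the weight $e^{-\rho}$ annihilates any contribution of $|\nabla\rho|^2$ to the second term, while when $\rho$ stays bounded the first term $\cosh\rho$ carries no information about $|\nabla\rho|$. The combined expression $\cosh\rho+\tfrac12\,e^{-\rho}|\nabla\rho|^2$ is precisely the one that sees both regimes simultaneously, which is what the case split in $(\Leftarrow)$ exploits.
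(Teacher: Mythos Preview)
Your proof is correct and follows essentially the same approach as the paper: both reduce properness to the blow-up of the scalar $\cosh\rho+\tfrac{1}{2}e^{-\rho}|\nabla\rho|^2$ (the paper obtains twice this quantity via $|\varphi_K|^2$ in the Klein model, whereas you read it off directly as the time coordinate $\phi_0$ in the hyperboloid model), and then argue its equivalence with the blow-up of $\rho^2+|\nabla\rho|^2$. Your explicit case split for that last equivalence is a bit more detailed than the paper, which simply asserts it.
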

\begin{proof}
From \eqref{Kleinformul}, the map associated to $\rho$, $\varphi _K:\Omega\to \left( \mathbb{B}^{m+1} , g_{K} \right)$, is given by
\begin{equation*}
  \varphi _K(x)=x-\frac{2\sigma^{2}(x)}{1+\sigma^{2}(x)+\left| \nabla \sigma(x)\right|^{2}}x
             -\frac{2\sigma(x)}{1+\sigma^{2}(x)+\left| \nabla \sigma(x)\right|^{2}}\nabla\sigma(x),
\end{equation*}
for all $x\in\Omega$, where $\sigma(x)=e^{-\rho(x)}$. Taking the Euclidean norm of $\varphi _K$ we obtain
\begin{equation*}
\left|\varphi _K(x)\right|^{2}=1-\left(\frac{2\sigma(x)}{1+\sigma(x)^{2}+|\nabla\sigma(x)|^{2}}\right)^{2} \text{ for every } x\in\Omega .
\end{equation*}

Hence, $\varphi _K$ is proper if, and only if,
\begin{equation*}
\lim_{x\to p}\left(\frac{1}{\sigma(x)}+\sigma(x)+\frac{|\nabla\sigma(x)|^{2}}{\sigma(x)}\right)=+\infty \text{ for all }  p\in\partial\Omega ,
\end{equation*}which is equivalent to
\begin{equation*}
\lim_{x\to p}\left(2\cosh\left(\rho(x)\right)+\frac{|\nabla \rho(x)|^{2}}{e^{\rho(x)}}\right)=+\infty  \text{ for all }  p\in\partial\Omega .
\end{equation*}

Finally, that is equivalent to
\begin{equation*}
\lim_{x\to p}\left[\rho(x)^{2}+|\nabla \rho(x)|^{2}\right]=+\infty  \text{ for all }  p\in\partial\Omega .
\end{equation*}
\end{proof}

In particular, we have
\begin{corollary}\label{Ch3:proper2}
If $\rho:\Omega\to\mathbb{R}$ is a proper smooth function then $\phi:\Omega\to\mathbb{H}^{m+1}$ is proper.
\end{corollary}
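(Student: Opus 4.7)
The plan is to deduce the corollary directly from Theorem \ref{Ch3:proper}: it suffices to show that properness of $\rho$ as a map $\Omega \to \mathbb{R}$ forces $\rho(x)^2 \to \infty$ as $x \to p$ for every $p \in \partial\Omega$, since then the trivial inequality $|\rho|_{1,\infty}^2(x) \geq \rho(x)^2$ gives the required blow-up and the theorem applies.

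First I would unpack the definition of properness for $\rho$: for every compact $K \subset \mathbb{R}$, the preimage $\rho^{-1}(K)$ is compact in $\Omega$. Fix any $p \in \partial\Omega$ and let $(x_n) \subset \Omega$ be a sequence with $x_n \to p$. The key step is to argue by contradiction that $|\rho(x_n)| \to \infty$. If not, some subsequence (still denoted $x_n$) satisfies $|\rho(x_n)| \leq M$ for all $n$, hence $x_n \in \rho^{-1}([-M,M])$, a compact subset of $\Omega$. Since compact subsets of $\Omega$ are closed in $\Omega$ and $(x_n)$ is a sequence in a compact metric space, it has a subsequence converging to a point of $\rho^{-1}([-M,M]) \subset \Omega$. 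But $x_n \to p \in \partial\Omega \subset \mathbb{S}^m \setminus \Omega$, contradicting uniqueness of limits in $\mathbb{S}^m$.

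Thus $\rho(x_n)^2 \to \infty$ along any sequence $x_n \to p$, so $\lim_{x \to p} \rho(x)^2 = +\infty$. Consequently
\begin{equation*}
\lim_{x \to p}\left(\rho(x)^2 + |\nabla\rho(x)|^2\right) = +\infty \quad \text{for every } p \in \partial\Omega,
\end{equation*}
and Theorem \ref{Ch3:proper} yields that $\phi : \Omega \to \mathbb{H}^{m+1}$ is proper. There is no serious obstacle here; the corollary is essentially a one-line consequence of the characterization, the only mild point being the careful contradiction argument showing that boundedness of $\rho$ along a boundary-approaching sequence violates properness.
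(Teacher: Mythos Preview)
Your proof is correct and follows exactly the approach the paper intends: the corollary is stated without proof as an immediate consequence of Theorem \ref{Ch3:proper}, and your argument simply makes explicit the one observation needed, namely that properness of $\rho$ forces $\rho(x)^2\to\infty$ (hence $|\rho|_{1,\infty}^2(x)\to\infty$) along any sequence approaching $\partial\Omega$.
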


\begin{remark}
In \cite{BEQ}, the authors proved that if $\abs{\rho}_{1,\infty}^2 (x)$ diverges at the boundary then $\phi$ is proper.
\end{remark}

Also, as a consequence of the proof of the above theorem, we obtain another condition on $\rho$ that makes $\phi$ proper when $g=e^{2\rho}g_{0}$ is complete.

\begin{theorem}\label{Ch3:Theo3}
Let $g=e^{2\rho}g_{0}$ be a complete metric in $\Omega$ such that $\sigma=e^{-\rho}$ is the restriction of a continuous function defined in $\overline{\Omega}$. Then $\phi:\Omega\to\mathbb{H}^{m+1}$ is a proper map.
\end{theorem}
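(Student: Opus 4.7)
The plan is to invoke Theorem \ref{Ch3:proper}: $\phi$ is proper provided $\rho(x)^{2}+|\nabla\rho(x)|^{2}\to\infty$ as $x\to p$ for every $p\in\partial\Omega$. I will establish the stronger conclusion $\rho(x)\to +\infty$, which amounts to showing that the continuous extension $\tilde\sigma$ of $\sigma=e^{-\rho}$ to $\overline{\Omega}$ satisfies $\tilde\sigma\equiv 0$ on $\partial\Omega$. The completeness hypothesis on $g$ is used only here.

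To prove this I argue by contradiction. Suppose $\tilde\sigma(p_{0})=\delta>0$ for some $p_{0}\in\partial\Omega$. By continuity of $\tilde\sigma$, there is a small geodesic $g_{0}$-ball $B\subset\s^{m}$ around $p_{0}$ on which $\tilde\sigma$ is bounded away from $0$ and from $+\infty$. Equivalently, $\rho$ is uniformly bounded on $B\cap\Omega$, and therefore the two metrics are uniformly equivalent there: $c_{1}\,g_{0}\leq g\leq c_{2}\,g_{0}$ on $B\cap\Omega$ for some constants $0<c_{1}<c_{2}$.

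Now pick a rectifiable arc $\gamma:[0,1)\to B\cap\Omega$ with $\gamma(s)\to p_{0}$ as $s\to 1^{-}$ (e.g.\ a short curve in $\Omega$ accumulating at $p_{0}$, which exists since $p_{0}$ is a boundary point of an open set in $\s^{m}$). Its $g_{0}$-length is automatically finite by compactness of $\s^{m}$, so the bound $g\leq c_{2}\,g_{0}$ forces its $g$-length to be finite as well. In particular $\{\gamma(s_{n})\}$ is Cauchy in $(\Omega,d_{g})$ for any $s_{n}\nearrow 1$. Completeness of $g$ (via Hopf--Rinow) forces convergence to a point of $\Omega$, but the $g_{0}$-limit is $p_{0}\in\partial\Omega$, a contradiction. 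Hence $\tilde\sigma\equiv 0$ on $\partial\Omega$, so $\rho(x)\to +\infty$ as $x\to p$ for every $p\in\partial\Omega$, and Theorem \ref{Ch3:proper} applies.

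The main obstacle is the construction of the arc $\gamma$ inside $B\cap\Omega$ ending at $p_{0}$; for a domain whose boundary is even mildly regular (e.g.\ locally path-connected at $p_{0}$) this is trivial, but in full generality one would prefer to replace it by a direct Cauchy argument, connecting a sequence $x_{n}\to p_{0}$ in $\Omega$ by short paths within $B\cap\Omega$ and showing the resulting $g$-lengths tend to zero. Everything else is a routine metric comparison combined with Theorem \ref{Ch3:proper}.
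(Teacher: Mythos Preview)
Your argument is correct and matches the paper's approach: both establish $\rho(x)\to+\infty$ at every $p\in\partial\Omega$ (equivalently $\tilde\sigma\equiv 0$ on $\partial\Omega$) and then invoke Theorem \ref{Ch3:proper}. The paper's version is terser---it simply asserts that completeness forces $\limsup_{x\to p}\rho(x)=+\infty$ and then uses the continuous extension of $\sigma$ to upgrade the $\limsup$ to a genuine limit---so your contradiction argument via comparable metrics is exactly the justification the paper leaves implicit, and the technicality you flag about constructing the arc $\gamma$ is one the paper does not address either.
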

\begin{proof}
Since $g=e^{2\rho}g_{0}$ is a complete metric in $\Omega\subset \s ^{m}$, we have $\limsup_{x\to p} \rho(x)=+\infty$ for all $p\in\partial\Omega $, that is equivalent to $\liminf_{x\to p} \left[-\rho(x)\right]=-\infty$ for all $p\in\partial\Omega $. Let $H:\overline{\Omega}\to\mathbb{R}$ the continuous extension of $\sigma:\Omega\to\mathbb{R}$, then

\begin{equation*}
H(p)=\lim_{x\to p}\sigma(x)=\liminf_{x\to p} \sigma(x)=0 \text{ for all } p\in\partial\Omega .
\end{equation*}

Thus, $\lim_{x\to p}\rho(x)=+\infty$ for all $p\in\Omega $, which implies that
\begin{equation*}
\lim_{x\to p}\left[\rho(x)^{2}+|\nabla \rho(x)|^{2}\right]=+\infty \text{ for all }  p\in\partial\Omega ,
\end{equation*}that is, $\phi:\Omega\to\mathbb{H}^{m+1}$ is proper.
\end{proof}

\subsection{Invariance of the properness}\label{subsect3.1}

An interesting relation between conformal metrics and horospherically concave hypersurface is how they are related by dilations and geodesic flow. Let us explain this in more detail. We assume that $\phi:\Omega\to\mathbb{H}^{m+1}$ is an horospherically concave hypersurface in $\mathbb{H}^{m+1}$. When we move the hypersurface $\phi:\Omega\to\mathbb{H}^{m+1}$ using the unit normal vector field $-\eta$, we have a family of horospherically concave hypersurfaces $\left\{\phi_{t}:\Omega\to\mathbb{H}^{m+1}\, : \, \, t>0\right\}$. For every $t>0$,
\begin{equation*}
  \phi_{t}(x)=\cosh(t)\phi(x)-\sinh(t)\eta(x)\quad\text{for all }x\in\Omega ,
\end{equation*}i.e.,
\begin{equation*}
  \phi_{t}(x)=\frac{e^{t+\rho(x)}}{2}\left[1+e^{-2(t+\rho(x))} (1+\left| \nabla\rho(x) \right|^{2}) \right] \left( 1,x \right)
  +e^{-(t+\rho(x))}\left( 0,-x+\nabla\rho(x) \right) ,
\end{equation*}
for all $x\in\Omega$. Then the map $\phi_{t}:\Omega\to\mathbb{H}^{m+1}$ is well-defined with horospherical metric $g=e^{2t}g=e^{2(t+\rho)}g_{0}$.
That is, the map $\phi_{t}:\Omega\to\mathbb{H}^{m+1}$ is just obtained from the conformal metric $g_{t}=e^{2t}g$ by the representation formula. Since the eigenvalues of the Schouten tensor of $g_{t}$ are just the dilation by a factor of $e^{-2t}$ of the eigenvalues of the Schouten tensor of $g$, i.e., given $x\in\Omega$ and let $\lambda_{1},\ldots,\lambda_{m}$ be the eigenvalues of the Schouten tensor of $g$ at the point $x$, then the eigenvalues of the Schouten tensor of $g_{t}$ at the point $x\in\Omega$ are
\begin{equation}\label{Ch3:paraleig}
  \lambda_{i,t}=e^{-2t}\lambda_{i}\leq \lambda_{i}<\frac{1}{2}\quad\text{ for all }i=1,\ldots,m,
\end{equation}
then the map $\phi_{t}:\Omega\to\mathbb{H}^{m+1}$ is a horospherically concave hypersurface, and clearly its horospherical metric is $g_{t}=e^{2t}g=e^{2(t+\rho)}g_{0}$. In conclusion, if we take $t>0$, the conformal metric $g_{t}=e^{2t}g$ give rise to a horospherically concave hypersurface $\phi_{t}:\Omega\to\mathbb{H}^{m+1}$ with the natural orientation $\eta_{t}$ given by
\begin{equation}\label{Ch3:Eq:natorient}
  \eta_{t}(x)=\phi_{t}(x)-e^{t+\rho(x)}(1,x)\quad\text{for all }x\in\Omega.
\end{equation}

Then, one observation is the following:

\begin{proposition}\label{Ch3:Prop5}
Assume that $\phi:\Omega\to\mathbb{H}^{m+1}$ is proper, then $\phi_{t}:\Omega\to\mathbb{H}^{m+1}$ is also proper for every $t\in\mathbb{R}$.
\end{proposition}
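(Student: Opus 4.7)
The plan is to reduce the statement directly to the characterization of properness provided by Theorem~\ref{Ch3:proper}. Observe that $\phi_t$ is the map associated (via the representation formula) to the conformal factor $\rho_t:=\rho+t$, since $g_t=e^{2t}g=e^{2(\rho+t)}g_0$. By Theorem~\ref{Ch3:proper}, $\phi_t$ is proper if and only if
\[
\rho_t^2(x)+|\nabla\rho_t(x)|^2 \;=\; (\rho(x)+t)^2+|\nabla\rho(x)|^2 \;\longrightarrow\;\infty
\]
as $x\to p$ for every $p\in\partial\Omega$. Since $\phi$ is proper, the same theorem gives that $A(x):=\rho^2(x)+|\nabla\rho(x)|^2\to\infty$ as $x\to p\in\partial\Omega$, so it remains to control the cross term appearing after expanding the square.

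The key algebraic estimate is the obvious bound $|\rho(x)|\le\sqrt{A(x)}$, which I would use to write
\[
(\rho(x)+t)^2+|\nabla\rho(x)|^2 \;=\; A(x)+2t\rho(x)+t^2 \;\ge\; A(x)-2|t|\sqrt{A(x)}+t^2 \;=\; \bigl(\sqrt{A(x)}-|t|\bigr)^2.
\]
Since $A(x)\to+\infty$ as $x\to p\in\partial\Omega$, the right-hand side also diverges, and hence so does $\rho_t^2(x)+|\nabla\rho_t(x)|^2$. Applying Theorem~\ref{Ch3:proper} in the reverse direction then shows that $\phi_t:\Omega\to\h^{m+1}$ is proper, completing the proof.

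I do not expect any real obstacle here: the content of the proposition is essentially the invariance of the quantity $|\rho|_{1,\infty}^2$, up to lower-order corrections, under the translation $\rho\mapsto\rho+t$. The only small subtlety is that this invariance is not pointwise but asymptotic at $\partial\Omega$, which is exactly what the one-line estimate $(\sqrt{A}-|t|)^2\le A+2t\rho+t^2$ captures. Note that the argument is valid for any $t\in\r$ (not just $t>0$), consistent with the statement of the proposition.
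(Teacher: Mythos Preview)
Your proof is correct. The paper actually states Proposition~\ref{Ch3:Prop5} as a bare ``observation'' and provides no proof at all, so there is nothing to compare against in terms of approach; your argument via Theorem~\ref{Ch3:proper} and the elementary estimate $(\rho+t)^2+|\nabla\rho|^2\ge(\sqrt{A}-|t|)^2$ is clean and complete.

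It may be worth noting a second, more geometric justification that the authors likely had in mind: by construction $\phi_t(x)$ is the point at signed geodesic distance $t$ from $\phi(x)$ along the normal geodesic, so $d_{\h^{m+1}}(\phi_t(x),\phi(x))=|t|$ for all $x\in\Omega$. Hence for any compact $K\subset\h^{m+1}$ one has $\phi_t^{-1}(K)\subset\phi^{-1}(K')$, where $K'$ is the closed $|t|$-neighborhood of $K$, itself compact; properness of $\phi$ then forces $\phi_t^{-1}(K)$ to be compact. This argument bypasses Theorem~\ref{Ch3:proper} entirely and works for any normal parallel flow, which is perhaps why the paper felt no proof was necessary. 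Your approach, by contrast, stays purely on the conformal side and exploits the characterization already established, which is equally valid and arguably more in keeping with the analytic spirit of the surrounding section.
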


\subsection{From immersed to embedded}

So far we have seen that the geodesic flow preserves the regularity of a horospherically concave hypersurface. Now, we study how an immersed horospherically concave hypersurface becomes embedded under the geodesic flow. We start by defining the meaning of a complete conformal metric with boundary in our situation.

\begin{definition}\label{Def:CompletMetri}
  Let $\Omega\subset \mathbb{S}^{m}$ be an open domain such that $\partial\Omega=\mathcal{V}_{1}\cup \mathcal{V}_{2}$ where $\mathcal{V}_{1}$ and $\mathcal{V}_{2}$ are disjoint compact submanifolds. We say that a conformal metric $g=e^{2\rho}g_{0}$, $\rho\in C^{2,\alpha}\left( \Omega\cup \mathcal{V}_{1} \right)$, is complete with boundary if given a divergent curve $\gamma:[0,1)\to\Omega$ then either
  \begin{itemize}
    \item $\displaystyle{\lim_{t\to1}\gamma(t)\in\mathcal{V}_{1}}$ and $\int_{0}^{1}\left| \gamma'(t) \right|_{g}dt<+\infty$, or
    \item $\displaystyle{\lim_{t\to1}\gamma(t)\in\mathcal{V}_{2}}$ and $\int_{0}^{1}\left| \gamma'(t) \right|_{g}dt=+\infty$.
  \end{itemize}
  In other words, $g$ is a complete metric on the manifold with boundary $\Omega\cup \mathcal{V}_{1}$.
\end{definition}

\begin{remark}
In the above definition, $\mathcal{V}_{2}$ can contain points, that is, it is permitted submanifolds that have dimension zero, or, even $\mathcal{V}_{2}$ could be empty.
\end{remark}

The next theorem shows that if we impose some extension condition on certain functions related to $\sigma=e^{-\rho}$ we can move along the geodesic flow and then we get an embedded hypersurface.

\begin{theorem}\label{Ch3:theo33}
Let $\rho\in C^{2,\alpha}\left(\Omega\cup \mathcal{V}_{1}\right)$ be such that $\sigma=e^{-\rho}\in C^{2,\alpha}\left( \Omega\cup \mathcal{V}_{1}\right)$ satisfies:
   \begin{enumerate}
     \item $\sigma\cdot\sigma$ can be extended to a $C^{1,1}$ function on $\overline{\Omega}$.
     \item $\left< \nabla\sigma,\nabla\sigma \right>$ can be extended to a Lipschitz function on $\overline{\Omega}$.
   \end{enumerate}

Then, there is $t_{0}>0$ such that for all $t>t_{0}$ the map $\phi_{t}:\Omega\cup\mathcal{V}_{1}\to\mathbb{H}^{m+1}$ associated to $\rho_{t}=\rho+t$ is an embedded horospherically concave hypersurface.
\end{theorem}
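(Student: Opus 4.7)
The plan is to work in the Klein ball model, using formula (\ref{Kleinformul}). For $\rho_t=\rho+t$ we have $\sigma_t=e^{-t}\sigma$, and substituting into (\ref{Kleinformul}) a short computation gives
\begin{equation*}
\varphi_{K,t}(x) - x \;=\; -\frac{2\,e^{-2t}}{1+e^{-2t}\bigl(\sigma^{2}+|\nabla\sigma|^{2}\bigr)}\,\bigl(\sigma^{2}(x)\,x + \sigma(x)\nabla\sigma(x)\bigr),
\end{equation*}
so that $\varphi_{K,t}(x) = x + e^{-2t}H_t(x)$ with
\begin{equation*}
H_t(x) \;=\; -\,\frac{\sigma^{2}(x)\,x + \tfrac{1}{2}\nabla(\sigma^{2})(x)}{1+e^{-2t}\bigl(\sigma^{2}(x)+|\nabla\sigma(x)|^{2}\bigr)}.
\end{equation*}

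The two extension hypotheses are precisely what is needed to control $H_t$ on $\overline{\Omega}$ uniformly in $t$. Condition (1) says $\sigma^{2}\in C^{1,1}(\overline{\Omega})$, so both $\sigma^{2}$ and $\nabla(\sigma^{2})=2\sigma\nabla\sigma$ extend to Lipschitz functions on $\overline{\Omega}$; condition (2) says $|\nabla\sigma|^{2}$ extends Lipschitz. The numerator of $H_t$ is therefore Lipschitz on $\overline{\Omega}$, and the denominator is Lipschitz, bounded, and bounded away from zero (by $1$), with Lipschitz constant that goes to $0$ as $t\to\infty$. A standard quotient-rule estimate then yields a constant $L$, independent of $t\ge 0$, with $\mathrm{Lip}_{\overline{\Omega}}(H_t)\le L$.

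Granted this, injectivity for $t$ large follows from the usual small-perturbation-of-the-identity argument: for any $x,y\in\overline{\Omega}$,
\begin{equation*}
|\varphi_{K,t}(x)-\varphi_{K,t}(y)| \;\ge\; |x-y| - e^{-2t}\,|H_t(x)-H_t(y)| \;\ge\; \bigl(1-e^{-2t}L\bigr)|x-y|.
\end{equation*}
Choose $t_{0}>0$ with $e^{-2t_{0}}L<1$. Then for every $t>t_{0}$ the extension of $\varphi_{K,t}$ to $\overline{\Omega}$ is injective; in particular $\phi_{t}$ is injective on $\Omega\cup\mathcal{V}_{1}$. Since (\ref{Ch3:paraleig}) ensures that $\phi_{t}$ remains a horospherically concave immersion (the Schouten eigenvalues of $g_{t}$ are $e^{-2t}\lambda_{i}<1/2$), the map $\phi_{t}:\Omega\cup\mathcal{V}_{1}\to\mathbb{H}^{m+1}$ is an embedded horospherically concave hypersurface.

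The main point where care is required is the uniform Lipschitz control of $H_t$ on \emph{all} of $\overline{\Omega}$, including $\mathcal{V}_{2}$ where $\rho$ itself need not extend. The whole force of hypotheses (1) and (2) is to guarantee that, even though $\sigma$ and $\nabla\sigma$ individually may degenerate on $\mathcal{V}_{2}$, the symmetric combinations $\sigma^{2}$, $\sigma\nabla\sigma$ and $|\nabla\sigma|^{2}$ entering the Klein representation formula are genuinely Lipschitz up to $\partial\Omega$; this is what lets the perturbation estimate be uniform and produces a single threshold $t_{0}$ that works simultaneously at every boundary point.
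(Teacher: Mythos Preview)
Your approach is essentially the paper's: work in the Klein model, recognize $\varphi_{K,t}$ as a Lipschitz perturbation of the inclusion map using the $C^{1,1}$ and Lipschitz extensions of $\sigma^{2}$ and $|\nabla\sigma|^{2}$, and deduce injectivity for large $t$. Where the paper extends the perturbed map to all of $\mathbb{S}^{m}$ and invokes a topological result of Fukui--Nakamura, you give the direct bilipschitz inequality $|\varphi_{K,t}(x)-\varphi_{K,t}(y)|\ge(1-e^{-2t}L)|x-y|$, which is more elementary and perfectly adequate here.

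There is, however, one genuine gap. Your appeal to \eqref{Ch3:paraleig} to conclude that $\phi_{t}$ is a horospherically concave immersion tacitly assumes the eigenvalues $\lambda_{i}$ of ${\rm Sch}(g)$ are bounded above on $\Omega$; otherwise no single threshold $t_{0}$ makes $e^{-2t}\lambda_{i}<\tfrac12$ hold uniformly. Since $\Omega\cup\mathcal{V}_{1}$ is not compact (the component $\mathcal{V}_{2}$ of the boundary is missing), this boundedness is not automatic from $\rho\in C^{2,\alpha}(\Omega\cup\mathcal{V}_{1})$. The paper fills this in by invoking the identity
\[
g^{-1}{\rm Sch}(g)+\tfrac{1}{2}|\nabla\sigma|^{2}\,{\rm Id}+\langle\nabla\sigma,\cdot\rangle\nabla\sigma=\tfrac{1}{2}\sigma^{2}\,{\rm Id}+\nabla^{2}(\sigma^{2})\quad\text{in }\Omega,
\]
and observing that hypotheses (1) and (2) force every term on the right and the last two terms on the left to extend boundedly to $\overline{\Omega}$, hence the Schouten eigenvalues are bounded. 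You should include this step; without it the horospherical-concavity (and hence immersion) part of your conclusion is not established.
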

\begin{proof}
Let $\zeta:\mathbb{S}^{m}\to\mathbb{R}$ be a $C^{1,1}$-extension of $\sigma^{2}$ such that $\zeta>-\dfrac{1}{3}$, and $l\in C^{\infty}\left(\mathbb{S}^{m} \right)$ be a Lipschitz extension of $\left| \nabla\sigma \right|^{2}$ such that also $l>-\dfrac{1}{3}$. Then, for $t>0$, we have the following Lipschitz extension of $\varphi_{t}$ (we work on the Klein model):
  \begin{equation*}
    \Phi_{t}(x)=x-2\frac{e^{-2t}\zeta(x)}{ 1+e^{-2t}\left[\zeta(x)+l(x)\right] }x-\frac{e^{-2t}}{1+e^{-2t}\left[\zeta(x)+l(x)\right] }\nabla \zeta (x),\quad x\in\mathbb{S}^{m}.
  \end{equation*}

Since $\left\{\Phi_{t}\right\}_{t>0}$ converges to the inclusion $\mathbb{S}^{m}\hookrightarrow\mathbb{R}^{m+1}$ uniformly on $\s ^{m}$, there is $t_{0}>0$ such that for every $t>t_{0}$, the map $\Phi_{t}$ is embedded. Then there is $t_{0}>0$ such that for every $t>t_{0}$ the map $\varphi_{t}$ is embedded (cf. \cite{FukNak}).

Also, from the equation:
\begin{equation*}
  g^{-1}\text{Sch}(g)+\frac{1}{2}|\nabla\sigma|^2Id+\left<\nabla\sigma,\cdot \right>\nabla\sigma=\frac{1}{2}\sigma^{2}Id+\nabla^{2}\sigma^{2}\quad\text{in }\Omega
\end{equation*}
and the hypothesis we have that the eigenvalues of the Schouten tensor of $g=e^{2\rho}g_{0}$ are bounded in $\Omega$, so, we can choose $t_{0}>0$ large, such that for every $t>t_{0}$, the map $\phi_{t}:\Omega\to\h^{m+1}$ (in the Hyperboloid model) is a horospherically concave hypersurface (see \eqref{Ch3:paraleig}). This concludes the proof.
\end{proof}

Let us see the difference between the Klein and Poincar\'{e} models with a simple example. Let $\Sigma $ be the horospherically concave hypersurface associated to the function $e^{-\rho}=\sigma:\Omega=\{(x,y,z)\in\s^{2}\, : \,\,|z|<\cos(\pi/4)\}\to\mathbb{R}$ given by
\begin{equation*}
  \sigma(x,y,z)=\frac{2}{2+\sqrt{2}}\left( \sqrt{1-z^{2}}-\cos\left( \pi/4 \right)\right).
\end{equation*}

 \begin{figure}[!h]
   \subfloat[Poincar\'{e} model: The surface is transversal to the ideal boundary.]{\includegraphics[width=0.4\textwidth]{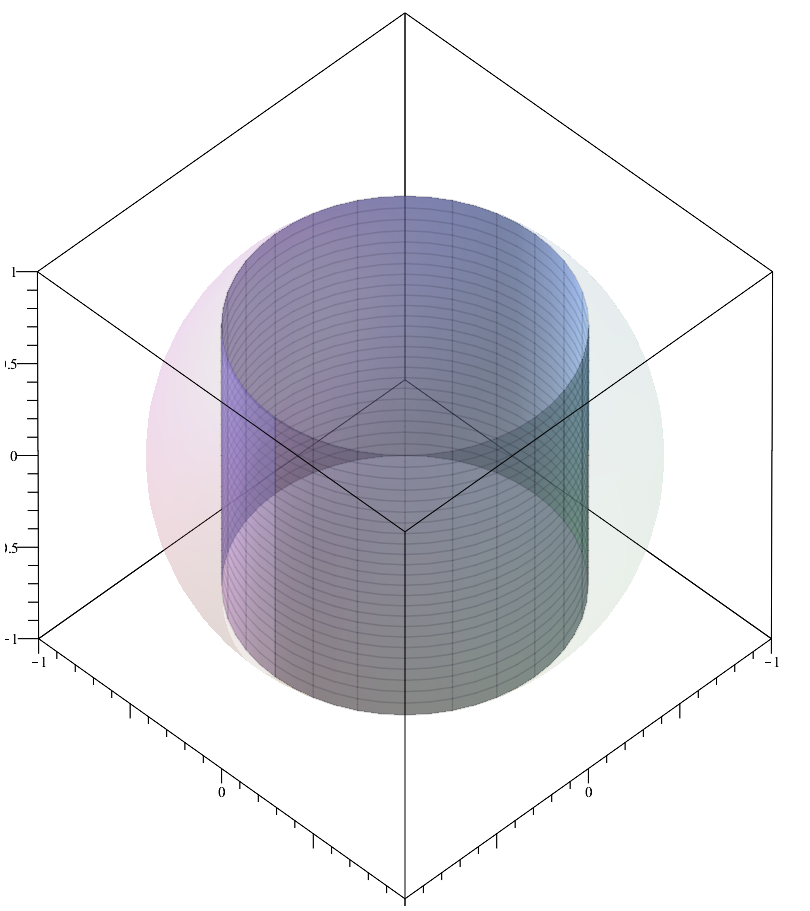}\label{f1}}
   \hfill
   \subfloat[Klein model: The surface is tangential to the ideal boundary.]{\includegraphics[width=0.4\textwidth]{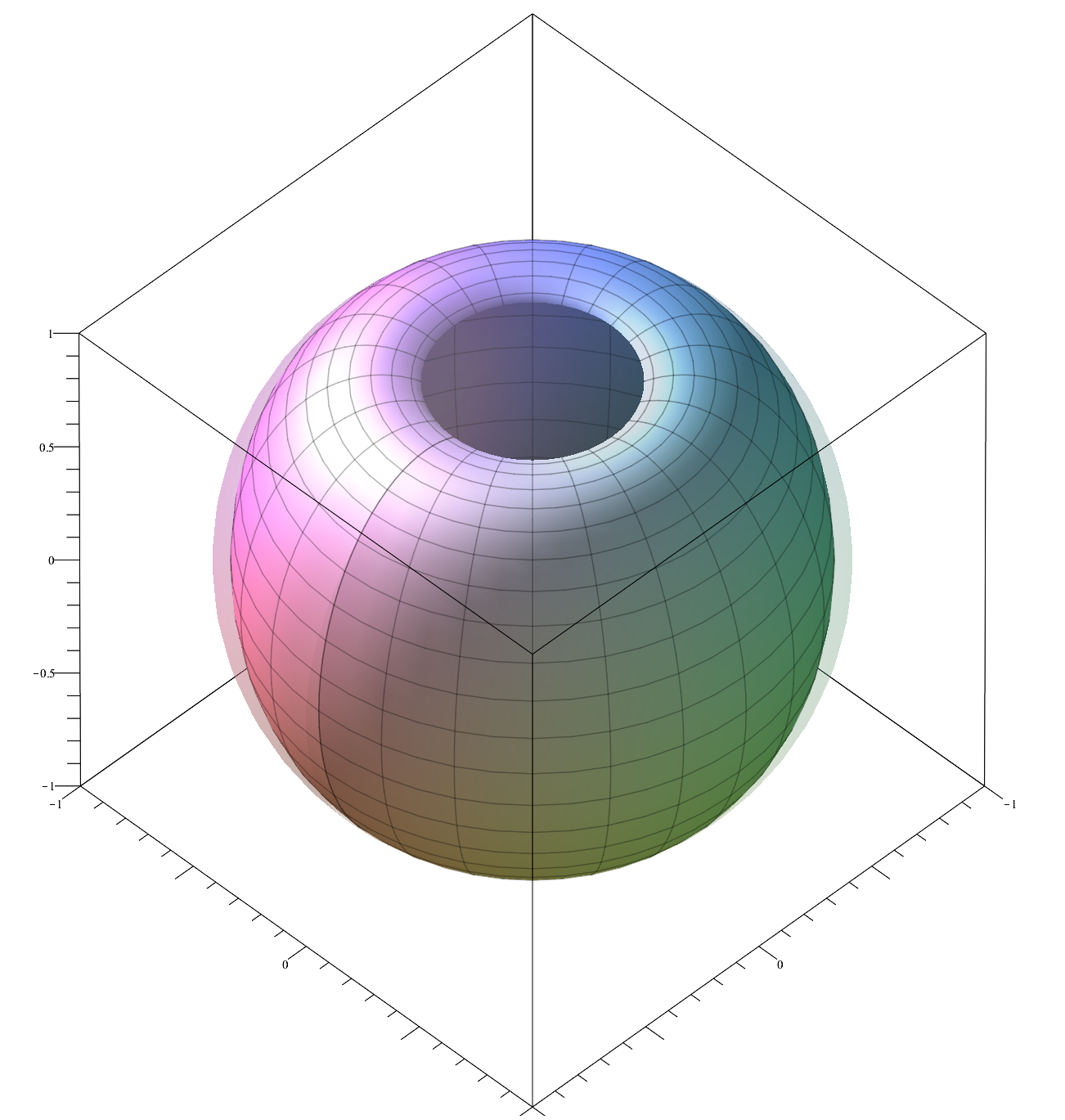}\label{f2}}
   \caption{Rotational surface}\label{fig0}
\end{figure}

One can easily observe (cf. Figure \ref{f1}), that in the Poincar\'{e} model $\Sigma$ is transversal to the ideal boundary. Nevertheless, in the Klein model, $\Sigma$ is tangential to the ideal boundary (cf. Figure \ref{f2}).

As we can see, the function $\sigma$ can be smoothly extended to $\s^{2}\setminus\{\pm e_{3}\} $, in fact, Figure \ref{f2} is a smooth extension of the surface in $\r^{3}$ in order to see the tangency of the surface with $\s^{2}$, the ideal boundary of $\h^{3}$.

\subsection{Conditions along the boundary}\label{Sec3.4}

Let $\Omega\subset\mathbb{S}^{m}$ be a domain such that $\partial\Omega=\mathcal{V}_{1}\cup\mathcal{V}_{2}$ and $\rho\in C^{2,\alpha}\left( \Omega \cup \mathcal{V}_{1}\right)$. Assume that $g:= e^{2\rho}g_0$  is complete in $\Omega \cup \mathcal{V}_{1}$ (see Definition \ref{Def:CompletMetri}). We study how conditions on either $\rho$ along $\mathcal{V}_{1}$, or geometric conditions on $\mathcal{V}_{1}$, influence the boundary of $\Sigma$.

In general, if $(\man,g_{0})$ is a Riemannian manifold with boundary $\partial \man$, $\nu$ is a unit normal vector field along $\partial \mathcal M$, $g=e^{2\rho}g_{0}$ is a conformal metric to $g_{0}$, $h_{0}$ the mean curvature of $\partial \man$ with respect to the metric $g_{0}$ and the unit normal vector field $\nu$, $h(g)$ the mean curvature of $\partial \man$ with respect to the metric $g$ and the normal vector field $\nu_{g}= e^{-\rho}\nu$, then
\begin{equation}\label{Ch3:equaMean}
   e^{\rho}\cdot h(g)+\frac{\partial \rho}{\partial \nu}=h_{0}  \text{ on }\partial \man,
 \end{equation}

Take $\sigma=e^{-\rho}$, then we have the following relation
\begin{equation}\label{Ch3:equaMean2}
   \frac{\partial \sigma}{\partial \nu}+h_{0}\cdot \sigma =h(g)  \text{ on } \partial \man,
 \end{equation}

If we consider the scaled metric $g_{t}=e^{2t}g$ in $\man$, where $t\in\r$, then the mean curvature of $\partial\man$ with respect to the unit normal vector field $\nu_{t}= e^{-t} \nu_{g}$ is
\begin{equation}\label{Ch3:Eq:meacurflow}
  h(g_{t})=e^{-t}h(g)\quad\text{on }\partial\man.
\end{equation}

Therefore, if $\partial\man$ is compact then $h(g_{t})$ goes to $0$ when $t$ goes to infinity. In our case $\man=\Omega\cup \mathcal{V}_{1}$ and $\partial \man=\mathcal{V}_{1}$.

Given $p\in\mathbb{S}^{m}$ and $r\in\left( 0,\dfrac{\pi}{2} \right]$, the geodesic ball of $\mathbb{S}^{m}$ centered at $p$ and radius $r$ is given by
  \begin{equation*}
    B_{r}(p)=\left\{ q\in \mathbb{S}^{m} \, : \, \, d_{\mathbb{S}^{m}}\left(q,p\right)<r \right\}
  \end{equation*}
  and its inward unit normal along $\partial B_{r}(p)$ is given by
  \begin{equation*}
     \nu(x)=\csc(r)p-\cot(r)x , \, x\in\partial B_{r}(p).
  \end{equation*}

We will see that any geodesic ball $B_{r}(p)$ has associated a unique totally geodesic hypersurface $E(a,0)\subset\mathbb{H}^{m+1}$, here we use the Hyperboloid model, such that
  \begin{equation*}
    \partial B_{r}(p)=\partial_{\infty}E(a,0),
  \end{equation*}
  where $a\in\mathbb{L}^{m+2}$ is a spacelike unit vector and $E(a,0)$ is defined by
  \begin{equation*}
    E(a,0)=\left\{y\in\mathbb{H}^{m+1} \, :\, \,  \ll y,a \gg=0 \right\}.
  \end{equation*}

We can explicitly get the vector $a$ from the center $p$ and the radius $r$, and vice-versa. Specifically, if $a = (a_0 ,\bar a)$, $\ll a,a\gg=1$, then
  \begin{equation*}
    p=\frac{1}{|\overline{a}|}\overline{a} \text{ and } \cot(r)=a_{0}, \,  r\in\left(0 , \frac{\pi}{2}\right).
  \end{equation*}

Now, we will study the boundary $\phi\left( \partial B_{r}(p) \right)$ of the associated horospherically hypersurface $\phi$ to $\rho$ when the boundary of $ B_{r}(p)$ has constant mean curvature with respect to the metric $g=e^{2\rho}g_{0}$. Consider a complete conformal metric $g=e^{2\rho}g_{0}$ in a domain $\Omega\cup \mathcal{V}_{1}\subset \overline{ B_{r}(p) }$ such that $\partial B_{r}(p)\subset \mathcal{V}_{1}$. Let $h(g)$ be the mean curvature of $\partial B_{r}(p)$ with respect to $g$ and the inward unit normal vector field $\nu_{g}= e^{-\rho}\nu$ along $\partial B_{r}(p)$, and $h_{0}=\cot(r)$.

Let $\phi: \Omega \cup \mathcal{V}_1\to\mathbb{H}^{m+1}$ be the associated horospherically concave hypersurface to the complete conformal metric $g$ in $\Omega\cup \mathcal{V}_1$. Then, a straightforward computation shows
  \begin{equation*}\label{Eq:BounInEqui}
    \ll\phi(x),h_{0}\left(1,x\right)+\left(0,\nu(x)\right)\gg=-h(g)  \text{ along }  \partial B_{r}(p),
  \end{equation*}where $\nu$ is the inward unit normal vector field along $\partial B_{r}(p)$ with respect to the standard metric $g_{0}$. Assume that $h(g)=c$, then
\begin{equation}\label{Ch3:MeanCte}
\ll\phi(x),h_{0}\left(1,x\right)+\left(0,\nu(x)\right)\gg=-c\text{ for all } x\in\partial B_{r}(p),
\end{equation}where
\begin{equation*}
\nu(x)=\csc(r)p-cot(r)x\text{ for all } x\in\partial B_{r}(p).
\end{equation*}

Set $a=h_{0}\left(1,x\right)+\left(0,\nu(x)\right)$, since $h_{0}=\cot(r)$, we have
  \begin{equation}\label{Eq:LikeSpace}
     a=\left(\cot(r),\frac{1}{\sin(r)}p\right) \text{ for all } x\in\partial B_{r}(p),
  \end{equation}i.e., $a$ only depends of $p$ and $r$.
  \begin{remark}
  In the particular case that $r=\pi/2$ and $p={\bf n}$, the north pole, we have
  \begin{equation*}\label{Eq:LikeSpace}
     a=\left(0,\ldots,0,1 \right)=\left( 0,e_{m+1} \right).
  \end{equation*}
  \end{remark}

  Then, from (\ref{Ch3:MeanCte}) and (\ref{Eq:LikeSpace}), we have that
\begin{equation*}
\phi\left( \partial \b_{r}(p)\right)\subset E(a,-c)=\left\{y\in\mathbb{H}^{m+1} \, : \, \, \ll y, a\gg=-c\right\}
  \end{equation*}which is an equidistant hypersurface to $E(a,0)$. In the case that $a=(0,\ldots,0,1)$, we just denote $E(-c)=E(a,-c)$. Summarizing, we have (see \cite[Claim E]{CE}):

\begin{proposition}\label{Ch3:Prop10}
Under the conditions above, assuming that $\mathcal{V}_{1}$ contains a component which is the boundary of a geodesic ball $\partial B_{r}(p)$, $p\in\mathbb{S}^{m}$, $r\in(0,\pi/2]$, and $h(g)=c$  along $\partial B_{r}(p)$,  then
\begin{equation*}
\phi\left(\partial B_{r}(p)\right) \subset E(a,-c),
\end{equation*}where $E(a,-c)$ is the totally geodesic hypersurface equidistant to $E(a,0)$ given by
\begin{equation*}
E(a,-c)=\left\{ y\in\mathbb{H}^{m+1} \, : \, \, \ll y,a \gg=-c \right\}
\end{equation*}
and $a=(\cot(r),\csc(r)p)$.
\end{proposition}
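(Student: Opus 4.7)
The plan is to verify directly the pointwise identity
\[
\ll \phi(x), a \gg = -c \quad \text{for every } x \in \partial B_r(p),
\]
which is exactly the membership $\phi(x) \in E(a,-c)$. Granting this identity, the conclusion is immediate because the vector $a = (\cot(r), \csc(r)p)$ is independent of the boundary point, so the same equidistant hypersurface contains the whole image $\phi(\partial B_r(p))$. The proof splits into a kinematic rewriting of $a$, a Lorentzian inner-product computation using the representation formula \eqref{phi}, and an application of the mean-curvature transformation rule \eqref{Ch3:equaMean2}.

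First I would observe that, for \emph{any} $x \in \partial B_r(p)$, the spacelike vector $a$ can be rewritten as
\[
a = h_0(1,x) + (0,\nu(x)),
\]
where $h_0 = \cot(r)$ and $\nu(x) = \csc(r)p - \cot(r)x$ is the inward unit normal along $\partial B_r(p)$ in $\mathbb{S}^m$. Indeed, $h_0 x + \nu(x) = \csc(r)p$ regardless of $x$. This rewriting is adapted to the representation formula \eqref{phi}, because both summands $(1,x)$ and $(0,\nu(x))$ pair cleanly with the two natural pieces of $\phi(x)$.

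Next I would plug \eqref{phi} into $\ll \phi(x),a\gg$. Splitting $\phi(x) = A(1,x) + e^{-\rho}(0,-x+\nabla \rho)$ with $A = \tfrac{e^\rho}{2}(1+e^{-2\rho}(1+|\nabla\rho|^2))$, and using the three orthogonality facts $|x|^2 = 1$, $x\cdot \nu(x) = 0$ (since $\nu$ is tangent to $\mathbb{S}^m$ at $x$), and $x \cdot \nabla \rho = 0$ (since $\nabla\rho \in T_x\mathbb{S}^m$ is the spherical gradient), the coefficient of $A$ collapses to zero and one is left with
\[
\ll \phi(x), a \gg \;=\; e^{-\rho(x)}\bigl(-h_0 + \partial_\nu \rho(x)\bigr).
\]

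Finally, the boundary relation \eqref{Ch3:equaMean2} applied to $\sigma = e^{-\rho}$ (so that $\partial_\nu \sigma = -e^{-\rho}\partial_\nu\rho$) gives $-e^{-\rho}\partial_\nu\rho + h_0 e^{-\rho} = h(g)$, i.e.\ $e^{-\rho}(-h_0 + \partial_\nu\rho) = -h(g)$. Substituting yields $\ll\phi(x),a\gg = -h(g)(x) = -c$ by the hypothesis $h(g) \equiv c$, which is the desired identity. The only mildly delicate point is the sign bookkeeping when converting between $\rho$ and $\sigma$ in \eqref{Ch3:equaMean2} and keeping track of the cross-terms in the Lorentzian pairing; everything else is formal once the rewriting $a = h_0(1,x)+(0,\nu(x))$ is in place.
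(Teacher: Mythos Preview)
Your proof is correct and follows essentially the same approach as the paper. The paper establishes the identity $\ll\phi(x),h_{0}(1,x)+(0,\nu(x))\gg=-h(g)$ as a ``straightforward computation'' in the discussion preceding the proposition, then observes that $a=h_{0}(1,x)+(0,\nu(x))=(\cot(r),\csc(r)p)$ is independent of $x$; you carry out exactly this computation explicitly, supplying the details (the vanishing of the $A$-coefficient via $\ll(1,x),(1,x)\gg=0$ and $x\cdot\nu=0$, and the identification of the remaining term via \eqref{Ch3:equaMean2}) that the paper leaves implicit.
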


We can say even more, in fact, $\Sigma=\phi\left(\Omega\right)$ makes a constant angle with $E(a,-c)$ along $\phi\left( \partial B_{r}(p)\right)$. Without loss of generality and for simplicity, we will assume $\mathcal{V}_{1}=\partial B_{r}(p)$. A unit normal vector field along $E(a,-c)$ is given by $N(y)=\frac{1}{\sqrt{1+c^{2}}}\left(a-cy\right)$, for all $y\in E(a,-c)$. Since $\partial\Sigma\subset E(a,-c)$, we have the following result (see also \cite[Claim D]{CE}):

\begin{proposition}\label{proposition3.5}
Under the above conditions, it holds
\begin{equation*}
\ll N , \eta \gg=\frac{-c}{\sqrt{1+c^{2}}} \text{ along }\phi\left(\partial B_{r}(p)\right).
\end{equation*}

In other words, the angle $\alpha$ between $\Sigma$ and $E(a,-c)$ along $\phi\left(\partial B_{r}(p)\right)$ is constant and it satisfies
\begin{equation*}
\cos(\alpha)=-\frac{c}{\sqrt{1+c^{2}}}.
\end{equation*}
\end{proposition}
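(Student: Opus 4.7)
The plan is to reduce everything to a short Lorentzian calculation using the identity $\eta = \phi - \psi$ coming from the definition of the light cone map $\psi := \phi - \eta = e^{\rho}(1,x)$, together with the equidistant location of $\phi(\partial B_r(p))$ provided by Proposition \ref{Ch3:Prop10}.

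First I would compute $\ll \psi, a \gg$ along $\partial B_r(p)$. Since $\psi(x) = e^{\rho(x)}(1,x)$ and $a = (\cot(r), \csc(r)\, p)$, a direct expansion gives
\begin{equation*}
\ll \psi(x), a \gg \;=\; e^{\rho(x)}\bigl(-\cot(r) + \csc(r)\langle x,p \rangle\bigr).
\end{equation*}
For $x \in \partial B_r(p)$ we have $\langle x,p\rangle = \cos(r)$, hence $-\cot(r) + \csc(r)\cos(r) = 0$ and therefore $\ll \psi, a \gg \equiv 0$ on $\partial B_r(p)$.

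Second, I would combine this with Proposition \ref{Ch3:Prop10}, which gives $\ll \phi, a \gg = -c$ on $\partial B_r(p)$. Writing $\eta = \phi - \psi$, the Lorentzian bilinearity yields
\begin{equation*}
\ll \eta, a \gg \;=\; \ll \phi, a \gg - \ll \psi, a \gg \;=\; -c \quad \text{along } \partial B_r(p).
\end{equation*}
Third, I would exploit that $\eta$ is a unit normal to $\Sigma$ inside $\mathbb{H}^{m+1}$, in particular tangent to $\mathbb{H}^{m+1}$ at $\phi$, so $\ll \phi, \eta \gg = 0$. Plugging into the formula $N = \frac{1}{\sqrt{1+c^2}}(a - c\, y)$ for $y \in E(a,-c)$ and evaluating along $\phi(\partial B_r(p))$:
\begin{equation*}
\ll N, \eta \gg \;=\; \frac{1}{\sqrt{1+c^2}}\bigl(\ll a, \eta \gg - c \ll \phi, \eta \gg\bigr) \;=\; \frac{-c}{\sqrt{1+c^2}},
\end{equation*}
which is the claimed identity, and the angle formula follows immediately.

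There is no real obstacle here: the only nontrivial input is the explicit normal $\eta = \phi - e^{\rho}(1,x)$ to the horospherically concave hypersurface (equivalently, the definition of the light cone map), and the observation that $(1,x)$ is lightlike and orthogonal to $a$ whenever $x$ lies on the geodesic sphere of radius $r$ around $p$. Care must be taken that the formula for $N$ and the sign of $\cos(\alpha)$ are consistent with the chosen orientation $\eta$ (the inward orientation when the umbilical case is a sphere); once this convention is fixed, the computation above gives the stated value unambiguously.
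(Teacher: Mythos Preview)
Your argument is correct. The paper itself does not supply a proof of this proposition; it merely records the formula for the normal $N(y)=\frac{1}{\sqrt{1+c^{2}}}(a-cy)$ and then states the result with a reference to \cite[Claim D]{CE}. Your computation fills in precisely the details one would expect: the vanishing of $\ll \psi, a\gg$ on $\partial B_r(p)$ (because $(1,x)$ is null and $\langle x,p\rangle=\cos r$), the identity $\ll \phi,a\gg=-c$ from Proposition~\ref{Ch3:Prop10}, and the orthogonality $\ll \phi,\eta\gg=0$ coming from $\eta\in T_{\phi}\mathbb{H}^{m+1}$. These combine exactly as you wrote to give $\ll N,\eta\gg=-c/\sqrt{1+c^{2}}$, so there is nothing to correct.
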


\subsection{Moving the hypersurface along the geodesic flow}

We will show the following boundary half-space property: Let $\Omega\subset\mathbb{S}^{m}_{+}$ be an open domain such that $\partial \Omega=\mathcal{V}_{1} \cup \mathcal{V}_{2}$, $\mathcal{V}_1 \cap \mathcal{V}_2 = \emptyset $, where the subset $\mathcal{V}_{1}$ is a compact hypersurface (not necessary connected) of $\mathbb{S}^{m}$ that contains $\partial\mathbb{S}^{m}_{+}$, and $\mathcal{V}_{2}$ is a finite union of disjoint compact submanifolds of $\mathbb{S}^{m}$, it might contain points. Let $\rho\in C^{2,\alpha} \left( \Omega \cup \mathcal{V}_{1} \right)$ be such that
\begin{equation*}
\lim_{x\to q}\left( e^{2\rho(x)}+\left|\nabla\rho(x)\right|^{2} \right)=+\infty \text{ for all } q\in\mathcal{V}_{2}.
\end{equation*}

Set
\begin{equation*}
  \mathcal{V}_{1}'=\mathcal{V}_{1}\setminus\partial\mathbb{S}^{m}_{+},
\end{equation*}we will show that, if $h(g)=c$ on $\partial \mathbb{S}^{m}_{+}$, then there exists $t_{1}\geq 0$ such that
\begin{equation*}
  \Sigma_{t}=\varphi_{t}\left(  \Omega\cup\mathcal{V}_{1}'\right) \subset C_{t}\text{ for all }t\geq t_{1} ,
\end{equation*}where $C_{t}\subset\mathbb{H}^{m+1}$ is the half-space determined by the equidistant $E(-e^{-t}c)$ that contains ${\bf n}$ at its boundary at infinity. Here, $\varphi _t$ stands for the immersion along the parallel flow in the Klein model (see \eqref{Kleinformul}). Specifically (see \cite[Claim C]{CE} in the compact case, i.e., for $\mathcal V _2 = \emptyset$):

\begin{theorem}\label{Ch3:prop38}
  Let $g=e^{2\rho}g_{0}$, $\rho\in C^{2,\alpha}\left( \Omega\cup\mathcal{V}_{1} \right)$, be a conformal metric in $ \Omega$ such that $\partial\S^{m}_{+}\subset\mathcal{V}_{1}$ and
  \begin{equation*}
    h(g)=c \text{ on } \partial\mathbb{S}^{m}_{+},
  \end{equation*}where $c\in\mathbb{R}$ is a constant. Assume that
  \begin{equation*}
    \lim_{x\to q}\left(e^{2\rho(x)}+\left| \nabla\rho(x) \right|^{2}\right)=+\infty \text{ for all } q\in \mathcal{V}_{2}.
  \end{equation*}

Then, there exists $t_{0}\geq 0$ such that for every $t>t_{0}$, the set $\phi_{t}\left( \Omega \cup \mathcal{V}_{1}' \right)$ (in the Klein model) is contained in the half-space determined by $E( -e^{-t}c)$ that contains ${\bf n}$ at its ideal boundary.
\end{theorem}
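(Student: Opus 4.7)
My plan is to reformulate the claimed containment as a single scalar inequality and then establish it by contradiction, splitting according to where a putative sequence of counterexamples accumulates.

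\emph{Setup.} In the hyperboloid model the half-space bounded by $E(-e^{-t}c)$ and having ${\bf n}$ at its ideal boundary is $C_{t}=\{y\in\h^{m+1}: \ll y,a\gg>-e^{-t}c\}$ with $a=(0,e_{m+1})$. Using the parallel-flow formula $\phi_{t}=\cosh(t)\phi-\sinh(t)\eta$ together with $\eta=\phi-e^{\rho}(1,x)$ and the representation \eqref{phi}, a direct calculation (noting $\ll(1,x),a\gg=x_{m+1}$ and $\ll(0,-x+\nabla\rho),a\gg=-x_{m+1}+(\nabla\rho)_{m+1}$) yields $\ll\phi_{t}(x),a\gg+e^{-t}c=\tfrac{1}{2}e^{-t}\Psi_{t}(x)$ where
\[
\Psi_{t}(x):=e^{2t}e^{\rho}x_{m+1}+e^{-\rho}\bigl[(|\nabla\rho|^{2}-1)x_{m+1}+2(\nabla\rho)_{m+1}\bigr]+2c.
\]
So the conclusion reduces to proving $\Psi_{t}(x)>0$ for every $x\in\Omega\cup\mathcal{V}_{1}'$ and every sufficiently large $t$. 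Two boundary identities come for free: Proposition~\ref{Ch3:Prop10} applied to the rescaled metric $g_{t}$ (whose boundary mean curvature is $e^{-t}c$ by \eqref{Ch3:Eq:meacurflow}) gives $\phi_{t}(\partial\s^{m}_{+})\subset E(-e^{-t}c)$, i.e.\ $\Psi_{t}\equiv 0$ on $\partial\s^{m}_{+}$; equivalently \eqref{Ch3:equaMean} with $h_{0}=0$ and $\nu=e_{m+1}$ gives the pointwise identity $(\nabla\rho)_{m+1}+ce^{\rho}\equiv 0$ along $\partial\s^{m}_{+}$, which then rewrites $2e^{-\rho}(\nabla\rho)_{m+1}+2c=2e^{-\rho}[(\nabla\rho)_{m+1}+ce^{\rho}]$.

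\emph{Contradiction argument.} Assume the conclusion fails: there exist $t_{n}\uparrow\infty$ and $x_{n}\in\Omega\cup\mathcal{V}_{1}'$ with $\Psi_{t_{n}}(x_{n})\le 0$. Pass to a subsequence $x_{n}\to x_{*}\in\overline{\Omega\cup\mathcal{V}_{1}'}\subset\overline{\s^{m}_{+}}$. If $x_{*}\in\Omega\cup\mathcal{V}_{1}'$, then $x_{*,m+1}>0$ and $\rho,|\nabla\rho|$ are continuous at $x_{*}$, so $\Psi_{t_{n}}(x_{n})\ge \tfrac{1}{2}e^{2t_{n}}x_{*,m+1}e^{\rho(x_{*})}-C\to+\infty$, contradicting $\Psi_{t_{n}}(x_{n})\le 0$. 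If $x_{*}\in\partial\s^{m}_{+}$, work on a compact neighborhood of $\partial\s^{m}_{+}$ disjoint from $\mathcal{V}_{2}$ on which $\rho\in C^{2,\alpha}$ is bounded; since $x_{m+1}$ is comparable to $d_{\s^{m}}(\cdot,\partial\s^{m}_{+})$, Taylor expansion of the $C^{1,\alpha}$ function $(\nabla\rho)_{m+1}+ce^{\rho}$ (which vanishes on $\partial\s^{m}_{+}$) gives $|(\nabla\rho)_{m+1}+ce^{\rho}|\le Cx_{m+1}$; combining with the identity $2e^{-\rho}(\nabla\rho)_{m+1}+2c=2e^{-\rho}[(\nabla\rho)_{m+1}+ce^{\rho}]$ and with $|e^{-\rho}(|\nabla\rho|^{2}-1)x_{m+1}|=O(x_{m+1})$ yields the factorization
\[
\Psi_{t_{n}}(x_{n})\;\ge\; x_{n,m+1}\bigl(e^{2t_{n}}e^{\rho(x_{n})}-C'\bigr),
\]
which is strictly positive for $n$ large since $e^{\rho}$ is bounded below on the neighborhood and $x_{n,m+1}>0$.

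\emph{Main obstacle: $x_{*}\in\mathcal{V}_{2}$.} Because $\mathcal{V}_{2}\cap\partial\s^{m}_{+}=\emptyset$, a neighborhood of $\mathcal{V}_{2}$ is bounded away from the equator and thus $x_{n,m+1}\ge m_{2}>0$. The hypothesis forces $e^{2\rho(x_{n})}+|\nabla\rho(x_{n})|^{2}\to\infty$, but this can occur in several regimes: $e^{\rho}\to\infty$ with $|\nabla\rho|$ bounded, $e^{\rho}$ bounded with $|\nabla\rho|\to\infty$, or $e^{\rho}\to 0$ with $|\nabla\rho|\to\infty$. Writing
\[
\Psi_{t_{n}}(x_{n})\;\ge\; m_{2}e^{2t_{n}}e^{\rho}+m_{2}e^{-\rho}|\nabla\rho|^{2}-2e^{-\rho}|\nabla\rho|-e^{-\rho}-2|c|
\]
(using $x_{n,m+1}\ge m_{2}$ on positive terms and $x_{n,m+1}\le 1$ on negative ones), the square completion $\tfrac{m_{2}}{2}e^{-\rho}|\nabla\rho|^{2}-2e^{-\rho}|\nabla\rho|\ge -\tfrac{2}{m_{2}}e^{-\rho}$ combined with $e^{2t_{n}}\ge 1$ reduces the lower bound to $\tfrac{1}{\sqrt{A_{n}}}\bigl[\tfrac{m_{2}}{2}(A_{n}+B_{n})-C_{0}\bigr]-2|c|$, where $A_{n}=e^{2\rho(x_{n})}$ and $B_{n}=|\nabla\rho(x_{n})|^{2}$. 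A short case check shows that $(A_{n}+B_{n})/\sqrt{A_{n}}=\sqrt{A_{n}}+B_{n}/\sqrt{A_{n}}\to\infty$ whenever $A_{n}+B_{n}\to\infty$, so $\Psi_{t_{n}}(x_{n})\to+\infty$, yielding the contradiction. The hard part is precisely this uniform control near $\mathcal{V}_{2}$: the hypothesis is too weak to pin down the asymptotic behaviour of $\rho$, and the bound must absorb the indefinite cross term $2e^{-\rho}(\nabla\rho)_{m+1}$ using the positive term $e^{-\rho}|\nabla\rho|^{2}$ via completion of squares, with the $t$-dependent contribution $e^{2t}e^{\rho}$ taking over only once $e^{\rho}$ is not too small.
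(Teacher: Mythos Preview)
Your argument is correct and takes a genuinely different route from the paper's. The paper proceeds geometrically: it splits the domain into a compact collar $\overline V\setminus K$ of $\mathcal V_2$ and the complementary compact piece $\overline{\s^{m}_{+}}\setminus V$; on the collar it extends $\varphi_t$ continuously to $\mathcal V_2$ by the identity and uses uniform convergence of $\Phi_t$ to the inclusion in the Klein model; on $\overline{\s^{m}_{+}}\setminus V$ it runs a first-contact argument with the foliation $\{E(s)\}$ of equidistant hypersurfaces, comparing the principal curvatures $\kappa_{i,t}$ of $\phi_t$ (which are uniformly close to $1$ for large $t$) with those of $E(s_1)$ at a would-be contact point to reach a contradiction. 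Your approach is purely analytic: you write out $\ll\phi_t,a\gg+e^{-t}c=\tfrac12 e^{-t}\Psi_t$ explicitly and run a sequential contradiction split by accumulation point, the boundary case using the identity $(\nabla\rho)_{m+1}+ce^{\rho}\equiv 0$ on $\partial\s^{m}_{+}$ and a first-order Taylor bound, and the $\mathcal V_2$ case reduced to the elementary fact that $(A+B)/\sqrt A\to\infty$ whenever $A+B\to\infty$ with $A>0$. What your method buys is that it avoids the curvature comparison machinery altogether and makes clear that near $\mathcal V_2$ the divergence hypothesis alone (without any largeness of $t$) already forces $\Psi_t>0$; what the paper's method buys is a transparent geometric picture (convexity of $\phi_t$ relative to the equidistant foliation) that dovetails with the first-contact/reflection arguments used downstream in Sections~\ref{SectBall}--\ref{CompleteAnnulus}.
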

\begin{proof}
We work first in the Klein model. Set $K=\mathbb{S}^{m}_{+}\setminus\Omega$ and $int(K)=K\setminus\partial K$.
  For every $t>0$ we define the continuous extensions $\Phi_{t}: \overline{\mathbb{S}^{m}_{+}}\setminus int(K) \to\mathbb{R}^{m+1}$ of $\varphi_{t}:\overline{\mathbb{S}^{m}_{+}}\setminus K \to\mathbb{H}^{m+1}$ given by
  \begin{equation*}
    \Phi_{t}(x)=\left\{\begin{array}{ccc}
                          \varphi_{t}(x) & , &x\in \Omega\cup\mathcal{V}_{1}, \\
                          x & , &x\in \mathcal{V}_{2}.
                       \end{array}
                \right.
  \end{equation*}

Observe that $\left\{ \Phi_{t} \right\}_{t>0}$ converge to the inclusion $\overline{\mathbb{S}^{m}_{+}}\setminus int(K)\hookrightarrow\mathbb{R}^{m+1}$ when $t\to\infty$. We take an open set $V$ such that
  \begin{equation*}
    K\subset V \subset \overline{V}\subset \mathbb{S}^{m}_{+}.
  \end{equation*}

  Since $\overline{V}\setminus int(K)$ is compact, there exists $t_{1}>0$ such that, for all $t>t_{1}$, the set $\varphi_{t}\left( \overline{V}\setminus K \right)$ is in the half-space determined by the equidistant $E(-c)$ and contains $e_{m+1}$ at its ideal boundary.

  Now we consider the map $\varphi: \overline{\mathbb{S}^{m}_{+}}\setminus V \to\mathbb{H}^{m+1}$. We will prove that there is $t_{0}>t_{1}$ such that, for every $t>t_{0}$, the set $\varphi_{t}\left(\mathbb{S}^{m}_{+}\setminus V\right)$ is contained in the half-space determined by $E(-e^{-t}c)$ and contains ${\bf n}$ at its ideal boundary. That will finish the proof.

  Since $\varphi_{t}$ converges to the inclusion $\overline{\mathbb{S}^{m}_{+}}\setminus V\hookrightarrow\mathbb{R}^{m+1}$ uniformly and $\partial V$ is compact, there is $t_{1}>0$ such that, for all $t>t_{1}$, $\varphi_{t}\left( \partial V \right)$ is in the half-space determined by $E( -c)$ that contains $e_{m+1}$ at its ideal boundary.

From (\ref{Ch3:Eq:meacurflow}), the mean curvature of $\partial \s^{m}_{+}$ with respect to the scaled metric $g_{t}=e^{2t}g$, $t\in\r$, is $h(g_{t})=e^{-t}c$  along $\partial\s^{m}_{+}$.

Now, we pass to the Hyperboloid model. From Proposition \ref{Ch3:Prop10}, $\phi_{t}\left( \partial\s^{m}_{+} \right)\subset E(-e^{-t}c)$. Recall that $\phi _t$  stands for the immersion along the geodesic flow in the Hyperboloid model. We consider the following unit normal vector field along $E(-e^{-t}c)$
  \begin{equation*}
    N(y)=\dfrac{1}{\sqrt{1+s_{1}^{2}}}\left[ (0,{\bf n})-(e^{-t}c)\cdot y \right],\, y\in E(-e^{-t}c),
  \end{equation*} then the principal curvatures of the umbilic hypersurface $E(-e^{-t}c)$ with respect to $N$ are equal to $\frac{ce^{-t}}{ \sqrt{1+c^{2}e^{-2t}} }$.

Let $\kappa_{1,t},\ldots,\kappa _{m,t}$ be the principal curvatures of $\phi_{t}$. Then, for all $t>0$, we have
  \begin{equation*}
    \frac{1}{2}=e^{-2t}\lambda_{i}+\frac{1}{1+\kappa_{i,t}} \text{ on }\ov{\S^{m}_{+}}\setminus V , \text{ for all } i=1,\ldots,m,
  \end{equation*}
  where $\lambda_{1},\ldots,\lambda_{m}$ are the eigenvalues of the Schouten tensor of $g$. Since $\kappa _{i,t}$ goes to 1 uniformly on $\ov{ \S^{m}_{+} }\setminus V$ as $t$ goes to infinity, for $i=1,\ldots,m$, then there exists $t_{0}>t_{1}$ such that
  \begin{equation}\label{equation4.3}
     \kappa _{i,t}>\frac{1}{2}>-\frac{ce^{-t}}{ \sqrt{1+c^{2}e^{-2t}} } \text{ for all } t>t_{0} \text{ and for all } i=1,\ldots,m,
  \end{equation}on $\overline{\mathbb{S}^{m}_{+}}\setminus V$.

We claim that for every $t>t_{0}$, the set $\phi_{t}\left(\mathbb{S}^{m}_{+}\setminus V\right)$ is contained in the half-space determined by $E( -e^{-t}c)$  and contains ${\bf n}$ at its ideal boundary. If this were not the case, we consider the foliation by equidistant hypersurfaces $\{E(s)\}_{s\in\mathbb{R}}$ of the Hyperbolic space $\mathbb{H}^{m+1}$, given by
  \begin{equation*}
    E(s)=\left\{y\in\mathbb{H}^{m+1}\, : \, \,  \ll y,(0,e_{m+1}) \gg=s\right\} .
  \end{equation*}

\begin{figure}[!h]
   \subfloat[The equidistant $E(s)$ does not intersect the hypersurface.]{\includegraphics[width=0.45\textwidth]{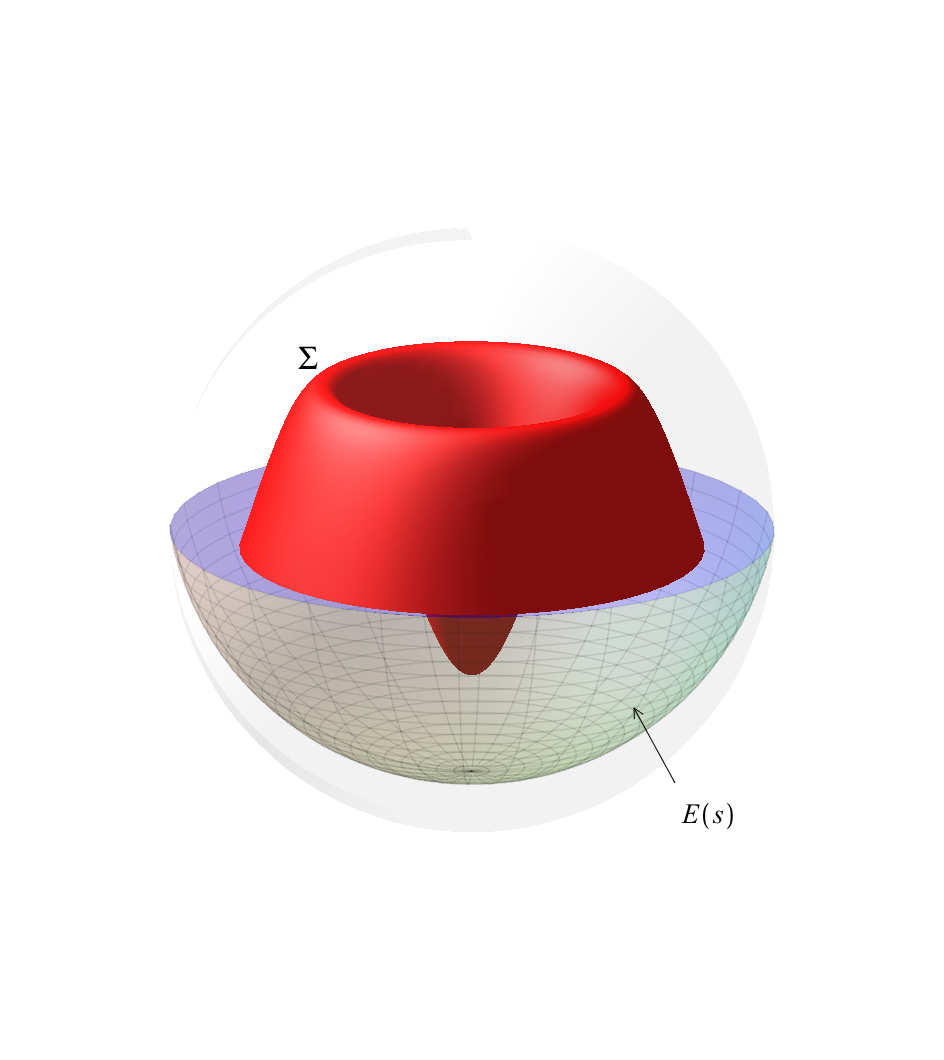}\label{grap1}}
   \hfill
    \subfloat[The equidistant $E(s_{1})$ intersects the hypersurface.]{\includegraphics[width=0.45\textwidth]{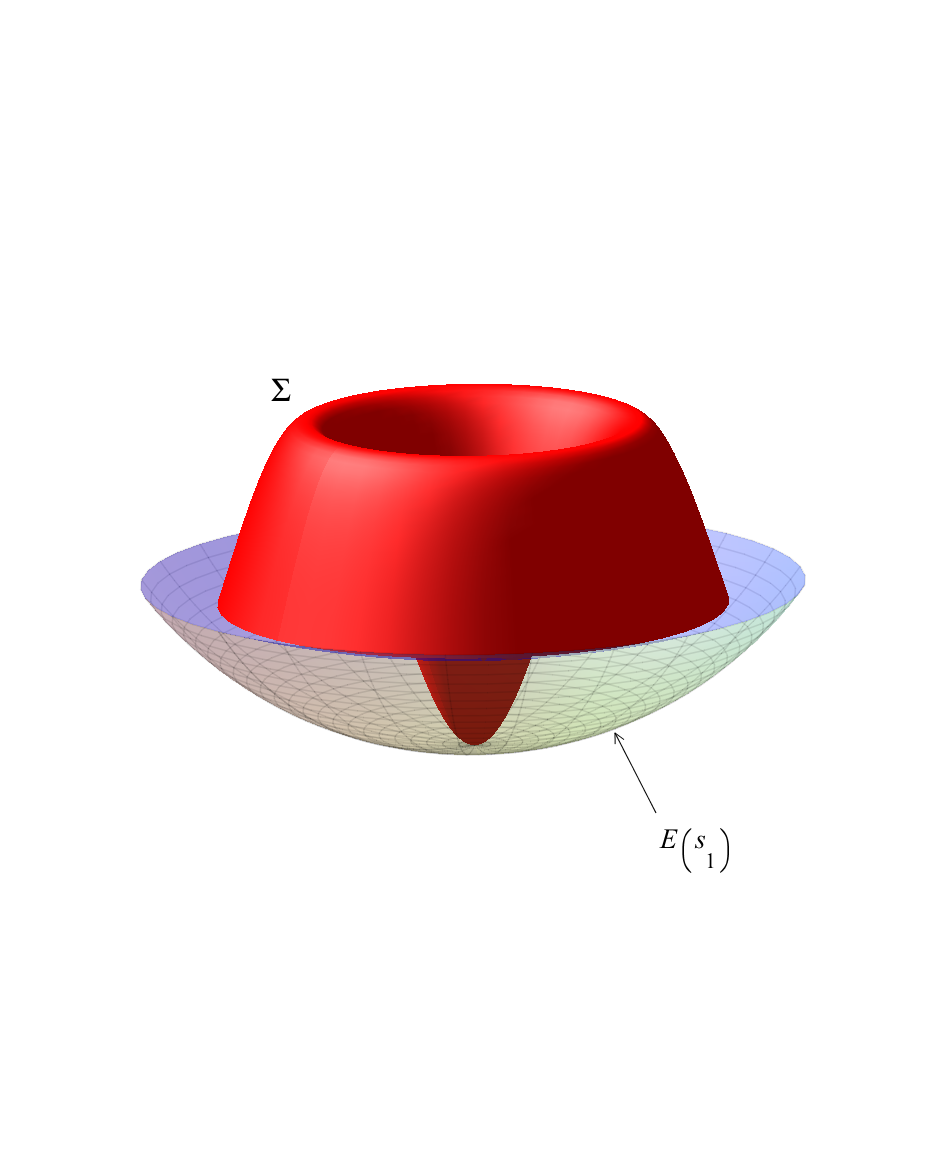}\label{grap2}}
   \hfill
   \subfloat[Opposite orientation of $\Sigma=Im(\phi_{t})$.]{ \includegraphics[width=0.45\textwidth]{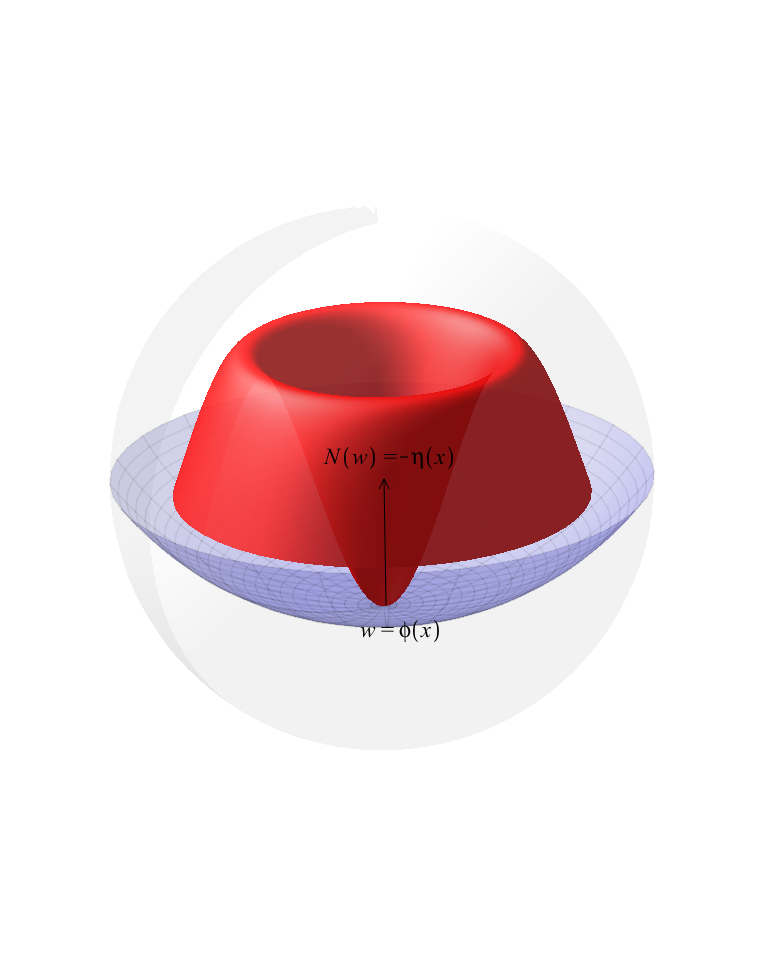}
  \label{grap3} }
\caption{Getting the first contact equidistant in the Poincar\'{e} ball model.}
\end{figure}

We consider the first equidistant hypersurface $E(s_{1})$ that intersects $\phi_{t}\left(\overline{\mathbb{S}^{m}_{+}}\setminus V\right)$ (cf. Figure \ref{grap1} and Figure \ref{grap2}), i.e.,
  \begin{equation*}
    E(s_{1})\cap \phi_{t}\left(\overline{\mathbb{S}^{m}_{+}}\setminus V\right)\neq\emptyset \text{ and } E(s)\cap\phi_{t}\left(\overline{\mathbb{S}^{m}_{+}}\setminus V\right)=\emptyset\text{ for all } s<s_{1}.
  \end{equation*}

Clearly $s_{1}\leq -c_{t}=-e^{-t}c$. We note that $E(s_{1})\cap \phi_{t}\left(\partial V\right)=\emptyset$ since $s_{1}\leq |c|$. Then
  \begin{equation*}
    E(s_{1})\cap \phi_{t}\left(\overline{\mathbb{S}^{m}_{+}}\setminus \overline{V}\right)\neq\emptyset.
  \end{equation*}

We claim that $E (s_{1})\cap \phi_{t}\left(\mathbb{S}^{m}_{+}\setminus\overline{V}\right) = \emptyset$, otherwise there would exist an interior contact point of $\phi_{t}\left(\mathbb{S}^{m}_{+}\setminus \overline{V}\right)$, say $x\in\mathbb{S}^{m}_{+}\setminus\overline{V}$ such that $w=\phi_{t}(x)\in E(s_{1})$.

Consider the normal vector field along $E(s_{1})$ that is defined in the Hyperboloid Model by $N(y)=\dfrac{1}{\sqrt{1+s_{1}^{2}}}\left[ (0,{\bf n})+s_{1}\cdot y \right]$, for all $y\in E(s_1)$.  The principal curvatures of $E(s_{1})$ with respect to this orientation are equal to
$ -\frac{s_{1}}{\sqrt{1+s_{1}^{2}}}$.

Along the horospherically concave hypersurface $\phi_{t}$, we consider the opposite  orientation to the canonical one, i.e, the unit normal vector field $\xi_{t}=-\eta_{t}$ (see (\ref{Ch3:Eq:natorient})). Then the principal curvatures are $\tilde{\kappa}_{i,t}=-\kappa_{i,t}$ with respect to that normal (cf. Figure \ref{grap3}).

Since the Hyperbolic Gauss map is the inclusion $\overline{\mathbb{S}^{m}_{+}}\setminus\overline{V}\hookrightarrow\mathbb{R}^{m+1}$, the normal vector field $\xi$ coincides with the normal $N$ along the equidistant $E(s)$ at the point $w$. The principal curvatures of $E(s_{1})$ with respect to $N$ at the point $w$ satisfy
  \begin{equation*}
    -\frac{s_{1}}{\sqrt{1+s_{1}^{2}}}\geq\frac{c_{t}}{\sqrt{1+c_{t}^{2}}} \text{ since } -s_{1}\geq c_{t}.
  \end{equation*}

Moreover, since $\phi_{t}\left( \mathbb{S}^{m}_{+}\setminus\overline{V}\right)$ is more convex than $E(s_{1})$ at the point $w$ we have
  \begin{equation*}
    \tilde{\kappa}_{i,t}\geq -\frac{s_{1}}{\sqrt{1+s_{1}^{2}}} \text{ at the point } w.
  \end{equation*}

That is,
  \begin{equation*}
    \kappa_{i,t}\leq \frac{s_{1}}{\sqrt{1+s_{1}^{2}}} \text{ at the point } w.
  \end{equation*}

So, at the point $w$, we have
  \begin{equation*}
    \kappa _{i,t}\leq -\frac{c_{t}}{\sqrt{1+c_{t}^{2}}} ,
  \end{equation*}but this contradicts (\ref{equation4.3}). Then, for every $t>t_{0}$, the set $\phi_{t}\left(\mathbb{S}^{m}_{+}\setminus V \right)$ is contained in the half-space determined by $E(-e^{-t}c)$ that contains ${\bf n}$ at its ideal boundary.

\end{proof}

\subsection{Elliptic problems for conformal metrics in domains of $\mathbb{S}^{m}$}\label{subsect1.6.1}

Consider $g=e^{2\rho}g_0$  a conformal metric, $\rho \in C^{2,\alpha}(\Omega)$, $\Omega\subseteq\s^{m}$, and denote by $\lambda (g)=(\lambda_1,\ldots, \lambda_m )$ the eigenvalues of the Schouten  tensor  of $g$.

We want to study partial differential equations for $\rho$ relating the eigenvalues of the Schouten tensor. For instance, the simplest example one may consider are the $\sigma_k$-Yamabe problems in $\Omega \subseteq \mathbb S^m$, that is, the  $k$-symmetric functions of the eigenvalues of the Schouten tensor. We are interested in the fully nonlinear case of this problem (see \cite{YLi06,YLi,LiLi1,LiLi2,LiLi3} and references therein).

Namely, given $(f,\Gamma)$  an elliptic data and a constant $c \geq 0$, find $\rho \in C^2 (\Omega)$ so that $g=e^{2\rho}g_0 $ is a solution of the problem
$$ f\left( \lambda(g)\right)= c  \text{ in } \Omega. $$

We must properly define the meaning of elliptic data $(f,\Gamma)$ for conformal metrics. Define
  \begin{equation*}
    \Gamma_{m}=\left\{x=(x_{1},\ldots,x_{m})\in\mathbb{R}^{m} \, : \,\, x_{i}>0, \, i=1,\ldots,m \right\}
  \end{equation*}
  and
  \begin{equation*}
    \Gamma_{1}=\left\{x=(x_{1},\ldots,x_{m})\in\mathbb{R}^{m}\, : \,\, \sum_{i=1}^{m}x_{i}>0\right\}.
  \end{equation*}

Let $\Gamma\subset\mathbb{R}^{m}$ be a convex open set satisfying:
\begin{enumerate}

\item[(C1)] It is symmetric. If $(x_{1},\ldots,x_{m})\in\Gamma$, then $(x_{i_{1}},\ldots,x_{i_{m}})\in\Gamma$, for every permutation $(i_{1},\ldots,i_{m})$ of $(1,\ldots,m)$.

\item[(C2)] It is  a cone. For every $t>0$, we have that $t(x_{1},\ldots,x_{m})\in\Gamma$ for  every $(x_{1},\ldots,x_{m})\in\Gamma$.

\item[(C3)] $\Gamma_{m}\subset \Gamma\subset\Gamma_{1}$.
\end{enumerate}

Then, we are ready to define:

\begin{definition}
We say that $(f ,\Gamma ) $ is an {\bf elliptic data} for conformal metrics if $\Gamma \subset \r ^{m}$ is a convex cone satisfying (C1), (C2) and (C3) and $ f\in C^{0}\left(\overline{\Gamma}\right)\cap C^{1}\left(\Gamma\right)$ is a function satisfying
    \begin{enumerate}
      \item $f$ is symmetric in $\Gamma$.
      \item $f|_{\partial\Gamma}=0.$
      \item $f|_{\Gamma}>0$.
      \item $f$ is homogeneous of degree 1.
      \item $\nabla f(x)\in\Gamma_{m}$ for every $x\in\Gamma$.
    \end{enumerate}
  \end{definition}

  This elliptic data is necessary for the definition of non-degenerate and degenerate elliptic problems.

\begin{definition}[\textbf{Elliptic problems for conformal metrics}]
Given an elliptic data $(f,\Gamma)$ for conformal metrics and $\Omega \subset \s ^m$ a domain:
  \begin{enumerate}
     \item The non-degenerate elliptic problem is to find a conformal metric $g=e^{2\rho}g_{0}$ such that
       \begin{equation*}
         f(\lambda(g)) = 1 \text{ in } \Omega
       \end{equation*}
       where $\lambda(g) = (\lambda_{1},\ldots,\lambda_{m})$ is composed by the eigenvalues of the Schouten tensor of $g$.
     \item The degenerate elliptic problem is to find a conformal metric $g=e^{2\rho}g_{0}$ such that
       \begin{equation*}
         f(\lambda(g)) = 0 \text{ in } \Omega
       \end{equation*}
       where $\lambda(g) = (\lambda_{1},\ldots,\lambda_{m})$ is composed by the eigenvalues of the Schouten tensor of $g$.
   \end{enumerate}
\end{definition}

There are two important remarks concerned to these equations. First, we have a particular solution in each case of special interest:
\begin{itemize}
\item {\it Non-degenerate elliptic problems:} a dilation of the standard metric is a solution to the problem, that is, there exists $\bar t \in \r $ so that $g : = e^{2 \bar t} g_0 $ is a solution of $f (\lambda (g)) =1 $ in $\Omega$.  This follows since $f$ is homogeneous of degree one.

\item {\it Degenerate elliptic problems:} A horosphere endowed with the inward orientation is horospherically concave as defined here. Then, the image of the Hyperbolic Gauss map is the whole sphere minus one point, the point at infinity where is based the horosphere. Hence, such horosphere defines a conformal metric $g := e^{2 \rho _H} g_0$ in $\s ^m \setminus \set{x}$, $x \in \s ^m$. Thus, for a domain $\Omega \subset \s ^m \setminus \set{x}$, the metric $g := e^{2 \rho _H} g_0$ is a solution of $f (\lambda (g)) =0 $ in $\Omega$. This follows since all the eigenvalues of the Schouten tensor associated to $g := e^{2 \rho _H} g_0$ are identically zero.
\end{itemize}

Second, it is about the Maximum Principle.

\begin{itemize}
\item  {\it Non-degenerate elliptic problems:} These equations satisfy the usual version of the Maximum Principle, Interior and Boundary (cf. \cite{LiLi1,LiLi2}).

\item {\it Degenerate elliptic problems:} These equations do not satisfy the usual Maximum Principle. Nevertheless, we will use a version developed by Y.Y. Li in \cite[Proposition 3.1]{YLi}.
\end{itemize}

\subsection{Dilation and elliptic problems for conformal metric}\label{sec3.6}

Given a conformal metric $g=e^{2\rho}g_{0}$, $\rho\in C^{\infty}\left( \Omega\right)$, that satisfies an elliptic problem for conformal metrics in $\Omega\subset\s^{m}$, i.e.,
\begin{equation*}
  f(\lambda(g))= cte \text{  in } \Omega,
\end{equation*}
where $f:\Gamma\to\mathbb{R}$ is an elliptic function for conformal metrics (see Subsection \ref{subsect1.6.1}), one can naturally ask the following question: given $t_{0}\in\mathbb{R}$, is the metric $g_{t_{0}}=e^{2t_{0}}g$ a solution of an elliptic problem for conformal metrics in $\Omega$? The answer is affirmative in the non-degenerate case and in the degenerate case. Let see in the non-degenerate case.

\begin{proposition}\label{Ch3:ParalProblem}
  Given a solution $g=e^{2\rho}g_{0}$, $\rho\in C^{\infty}\left(\Omega\right)$, of an elliptic problem with elliptic data $(f,\Gamma)$ and $t_{0}\in\mathbb{R}$, then $g_{t_{0}}=e^{2t_{0}}g$ is a solution of an elliptic problem that is given for the elliptic data $(f_{t_{0}},\Gamma)$ where
  \begin{equation*}
    f_{t_{0}}(x)=f\left( e^{2t_{0}}x\right)\quad\text{for all } x\in\Gamma.
  \end{equation*}
\end{proposition}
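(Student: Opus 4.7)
The plan is to verify this in two stages: first show that $(f_{t_0},\Gamma)$ qualifies as an elliptic data for conformal metrics, and then check by direct substitution that $g_{t_0}$ solves $f_{t_0}(\lambda(g_{t_0})) = c$ where $c$ is the original constant.

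First I would verify the five defining properties of an elliptic data for $(f_{t_0},\Gamma)$. The cone $\Gamma$ is unchanged, so conditions (C1)--(C3) hold automatically. For $f_{t_0}(x) := f(e^{2t_0}x)$: symmetry is inherited from the symmetry of $f$; the cone property of $\Gamma$ (so that $e^{2t_0}x \in \partial\Gamma$ iff $x\in\partial\Gamma$, and similarly for the interior) together with $f|_{\partial\Gamma}=0$ and $f|_{\Gamma}>0$ give $f_{t_0}|_{\partial\Gamma}=0$ and $f_{t_0}|_{\Gamma}>0$; homogeneity of degree one follows from
\begin{equation*}
f_{t_0}(tx) = f(e^{2t_0}tx) = t\,f(e^{2t_0}x) = t\,f_{t_0}(x);
\end{equation*}
and finally $\nabla f_{t_0}(x) = e^{2t_0}\,\nabla f(e^{2t_0}x) \in \Gamma_m$ since $\nabla f \in \Gamma_m$ and $e^{2t_0}>0$ and $\Gamma_m$ is a cone.

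Next I would exploit the scaling identity for the Schouten tensor recorded in \eqref{Ch3:paraleig}: if $\lambda(g)=(\lambda_1,\ldots,\lambda_m)$ are the eigenvalues of the Schouten tensor of $g$, then the eigenvalues of the Schouten tensor of $g_{t_0}=e^{2t_0}g$ satisfy $\lambda_{i,t_0} = e^{-2t_0}\lambda_i$, i.e.\ $\lambda(g_{t_0}) = e^{-2t_0}\lambda(g)$. Plugging into $f_{t_0}$,
\begin{equation*}
f_{t_0}\bigl(\lambda(g_{t_0})\bigr) = f\bigl(e^{2t_0}\lambda(g_{t_0})\bigr) = f\bigl(e^{2t_0}\cdot e^{-2t_0}\lambda(g)\bigr) = f(\lambda(g)),
\end{equation*}
which equals the original constant $c$. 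Hence $g_{t_0}$ solves the elliptic problem associated to $(f_{t_0},\Gamma)$ with the same right-hand side.

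There is no real obstacle here: the proposition is essentially a bookkeeping statement reflecting the fact that conformal rescaling by a constant factor rescales the Schouten tensor by the inverse square factor, and the degree-one homogeneity of $f$ makes it permissible to absorb that rescaling into a reparametrized elliptic function. The only mild point to keep straight is the direction of the scaling inside $f_{t_0}$ (namely $e^{+2t_0}$) versus on the Schouten eigenvalues (namely $e^{-2t_0}$), but these cancel precisely, which is the content of the computation above.
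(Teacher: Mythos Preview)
Your proof is correct and follows essentially the same approach as the paper: verify that $(f_{t_0},\Gamma)$ is an elliptic data (using the cone property of $\Gamma$ and the chain rule for $\nabla f_{t_0}$), then use the scaling identity $\lambda(g_{t_0})=e^{-2t_0}\lambda(g)$ to get $f_{t_0}(\lambda(g_{t_0}))=f(\lambda(g))$. If anything, your verification is slightly more thorough, as you explicitly check homogeneity and positivity, which the paper leaves implicit.
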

\begin{proof}
Since $\Gamma\subset\r^{m}$ is a cone then $e^{-2t_0}\Gamma=\Gamma$. Also $\partial \Gamma=e^{-2t_{0}} \partial\Gamma$, then
  \begin{equation*}
    f_{t_{0}}(x)=f(e^{2t_{0}}x)=0\quad\text{for all } x\in\partial\Gamma,
  \end{equation*}and
  \begin{equation*}
    \nabla f_{t_{0}}(x)=e^{2t_{0}}\nabla f(e^{2t_{0}}x)\in\Gamma_{m} \quad\text{for all } x\in\Gamma .
  \end{equation*}

It is clear that $f_{t_{0}}:\overline{\Gamma}\to\mathbb{R}$ is symmetric and
  \begin{equation*}
    f_{t_{0}}(\lambda(g_{t_{0}}))=f_{t_{0}}\left( e^{-2t_{0}}\lambda(g)\right)=f(e^{2t_{0}}e^{-2t_{0}}\lambda(g))=1 \text{ in }\Omega.
  \end{equation*}

Then, the conformal metric $g_{t_{0}}=e^{2t_{0}}g$ is a solution of the elliptic problem given by the elliptic data $(f_{t_{0}},\Gamma)$.
\end{proof}

\bigskip

\section{A non-existence Theorem on $\ov{ \mathbb{S}^{m} _+ }$}\label{SectBall}

As we have said at the Introduction, Escobar \cite{Esc2,Esc3} proved that there is no conformal metric $g=e^{2\rho}g_{Eucl}$ in the Euclidean unit ball $\b ^m$ with zero scalar curvature and nonpositive constant mean curvature along boundary, i.e., $h(g)\leq 0$ constant. We extend here such result for degenerate elliptic equations.

\begin{theorem}\label{Ch4:Theo.4.4}
Let $ (f , \Gamma )$ be an elliptic data for conformal metrics and let $c\leq 0$ be a constant. Then, there is no  conformal metric $g=e^{2\rho}g_{0}$ in $\overline{\mathbb{S}^{m}_{+} }$, where $\rho\in C^{2,\alpha}\left(\overline{ \mathbb{S}^{m}_{+} }\right)$, such that
    \begin{equation*}
      \left\{
               \begin{array}{ccccl}
                 f(\lambda(g))       &=& 0 & \text{ in } & \ov{ \mathbb{S}^{m}_{+} },          \\
                 h(g) &=& c & \text{ on } & \partial \mathbb{S}^{m}_{+},
               \end{array}
      \right.
    \end{equation*}where $\lambda(g)=(\lambda_{1},\ldots,\lambda_{m})$ is composed by the eigenvalues of the Schouten tensor of $g=e^{2\rho}g_{0}$.
\end{theorem}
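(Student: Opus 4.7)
The plan is to argue by contradiction, comparing $\rho$ with the horospherical potential of the horosphere based at the south pole $\mathbf{s}\in\s^m$. Let $H\subset\h^{m+1}$ be this horosphere and let $g_H=e^{2\rho_H}g_0$ be its horospherical metric; since $\mathbf{s}\notin\overline{\s^m_+}$, the function $\rho_H$ is smooth on $\overline{\s^m_+}$. As recalled in Subsection~\ref{subsect1.6.1}, $\lambda(g_H)\equiv 0$, so $f(\lambda(g_H))=0$. Moreover $g_H$ is the pull-back of the Euclidean metric on $\r^m$ via stereographic projection from $\mathbf{s}$, which maps $\partial\s^m_+$ to the unit Euclidean sphere; hence $h(g_H)=1>0$ on $\partial\s^m_+$ with respect to the inward unit normal. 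Set $s^*:=\max_{\overline{\s^m_+}}(\rho-\rho_H)$, attained at some $x_0\in\overline{\s^m_+}$. The dilated metric $g':=e^{2s^*}g_H$ satisfies $\lambda(g')=e^{-2s^*}\lambda(g_H)\equiv 0$, so $f(\lambda(g'))=0$, and by \eqref{Ch3:Eq:meacurflow} its mean curvature along $\partial\s^m_+$ is $e^{-s^*}>0$, while $\rho\leq\rho_H+s^*$ with equality at $x_0$.

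If $x_0\in\partial\s^m_+$, the argument reduces to a Hopf-type boundary calculation. Since $\partial\s^m_+$ is totally geodesic in $\s^m$ ($h_0=\cot(\pi/2)=0$), equation \eqref{Ch3:equaMean} gives $\partial_\nu\rho=-c\,e^\rho$ and $\partial_\nu\rho_H=-e^{\rho_H}$ along $\partial\s^m_+$, where $\nu$ is the inward unit normal. A boundary maximum of $\rho-\rho_H-s^*$ at $x_0$ forces $\partial_\nu(\rho-\rho_H)(x_0)\leq 0$, and substituting $\rho(x_0)=\rho_H(x_0)+s^*$ yields $c\geq e^{-s^*}>0$, contradicting $c\leq 0$. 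If instead $x_0\in\s^m_+$, both $g$ and $g'$ are $C^{2,\alpha}$ solutions of the same degenerate equation $f(\lambda(\cdot))=0$, and $g$ touches $g'$ from below at the interior point $x_0$. Applying the strong maximum principle of Y.Y.~Li for degenerate conformally invariant fully nonlinear equations (\cite{YLi}, Proposition~3.1) gives $\rho\equiv\rho_H+s^*$ in a neighbourhood of $x_0$, hence on all of $\overline{\s^m_+}$ by connectedness of $\s^m_+$ and boundary regularity; this returns us to the boundary situation and forces $h(g)=e^{-s^*}>0$, again contradicting $c\leq 0$.

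The main obstacle is the interior contact case, since the degenerate equation $f(\lambda(g))=0$ does not satisfy a classical strong maximum principle (linearisation fails along $\partial\Gamma$). The version due to Y.Y.~Li, explicitly flagged in Subsection~\ref{subsect1.6.1} as the tool for dealing with degenerate elliptic problems, is what reduces the interior case to the boundary one; after that, the argument is just the explicit Hopf-type calculation against the horospherical barrier, using only the scaling law \eqref{Ch3:Eq:meacurflow} for the mean curvature and the boundary identity \eqref{Ch3:equaMean}.
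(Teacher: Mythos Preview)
Your approach is sound in outline and, compared with the paper, more direct: the paper passes to the associated horospherically concave hypersurface, invokes Theorems \ref{Ch3:theo33} and \ref{Ch3:prop38} to place $\Sigma$ in the correct half-space, and then uses the angle computation of Proposition \ref{proposition3.5} to rule out boundary contact with the sweeping horospheres geometrically, before applying \cite[Proposition 3.1]{YLi}. You bypass all the hypersurface machinery and handle a possible boundary contact by an explicit first-derivative test; that part is correct.

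The gap is in your interior case. Proposition 3.1 of \cite{YLi} is \emph{not} a strong maximum principle of the form ``interior touch forces $\rho\equiv\rho_H+s^*$''; it is a comparison principle of the form ``if $\rho_1,\rho_2$ both satisfy $\lambda\in\partial\Gamma$ and $\rho_1>\rho_2$ on $\partial B$, then $\rho_1>\rho_2$ in $B$'' --- this is exactly how the paper uses it in every instance. In the degenerate regime the equation is not uniformly elliptic along $\partial\Gamma$, and Li's result does not assert that equality at one interior point propagates to a neighbourhood. The repair is immediate and in fact simplifies your argument: replace your dichotomy by ``either some maximizer of $\rho-\rho_H$ lies on $\partial\s^m_+$, or $\rho_H+s^*>\rho$ on all of $\partial\s^m_+$''. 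In the second alternative apply Li's comparison on the ball $\s^m_+$ to obtain $\rho_H+s^*>\rho$ throughout $\s^m_+$, contradicting the existence of the maximizer $x_0$; hence the first alternative must hold, and your Hopf computation then yields $c\geq e^{-s^*}>0$. With this adjustment the proof is complete and avoids the hypersurface geometry entirely.
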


\begin{proof}
The proof will be done by contradiction. Assume that there exists a conformal metric in $\overline{ \mathbb{S}^{m}_{+} }$, $g=e^{2\rho}g_{0}$, $\rho\in C^{2,\alpha}\left(\overline{ \mathbb{S}^{m}_{+} }\right)$, that solves the above problem.

Since $\overline{\mathbb{S}^{m}_{+}}$ is compact, up to a dilation of $g$, we can assume (see Subsection \ref{sec3.6}), without loss of generality, that all the eigenvalues of ${\rm Sch}(g)$ are less than $1/2$. By Theorems \ref{Ch3:theo33} and \ref{Ch3:prop38}, we can assume that the associated horospherically concave hypersurface $\phi:\overline{\mathbb{S}^{m}_{+}}\to\mathbb{H}^{m+1}$ is embedded and it is contained in the half-space determined by the equidistant hypersurface $E(-c)$ to the totally geodesic hypersurface $E(0)$, and such component contains ${\bf n}$ at its ideal boundary.
We notice that, in the Poincar\'{e} ball model, since $-c\geq0$, the equidistant $E(-c)$ is contained in the half-space determined by the totally geodesic hyperplane $E(0)$ that contains ${\bf n}$ at its ideal boundary.

We consider the horospherically concave hypesurface $\Sigma=\phi\left(\overline{\mathbb{S}^{m}_{+}}\right)$ in the Hyperbolic space $\mathbb{H}^{m+1}$ that is associated to $\rho$. Moreover, the angle between the hypersurface $\Sigma$ and the equidistant hypersurface $E(-c)$ along $\partial \Sigma$ is constant equals to
$$ \cos(\alpha) = -\dfrac{c}{\sqrt{1+c^{2}}},$$this follows from Proposition \ref{proposition3.5}.

Recall that the support function associated to horospheres  are solutions of the above degenerate elliptic problem. Now, we consider the foliation of the Hyperbolic space $\mathbb{H}^{m+1}$ by horospheres, $\{H(s)\}_{s\in\mathbb{R}}$, that have the same point at the ideal boundary of the Hyperbolic Space, $p_{\infty}={\bf s}$. We parametrize  by the signed distance, $s \in \r$, to the origin of the Poincar\'{e} ball model.

\begin{figure}[!ht]
   \subfloat[The hypersurface $\Sigma$ is in the convex side of the horosphere $H(s)$. ]{\includegraphics[width=0.4\textwidth]{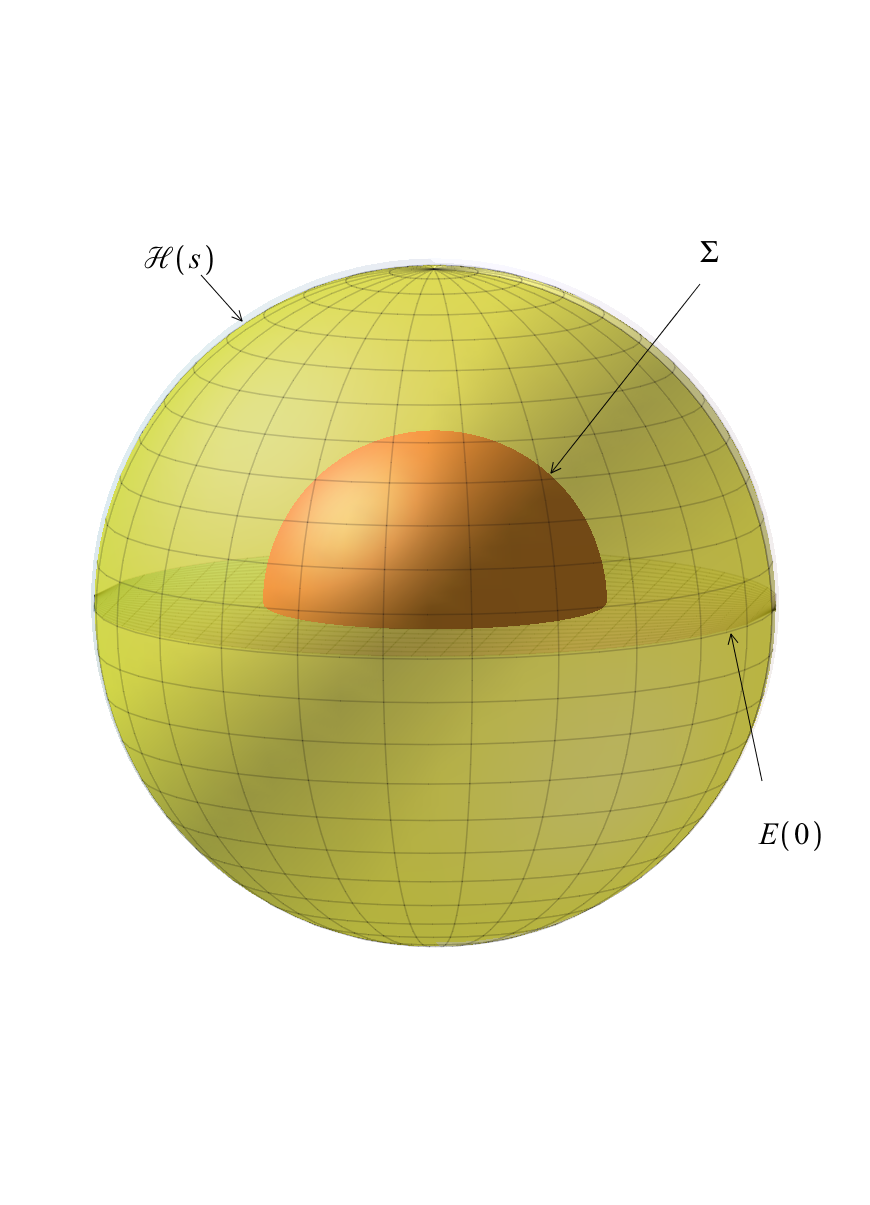}\label{grap410}}
   \hfill
   \subfloat[The horosphere $H(s)$ does not intersect $\Sigma$. ]{\includegraphics[width=0.4\textwidth]{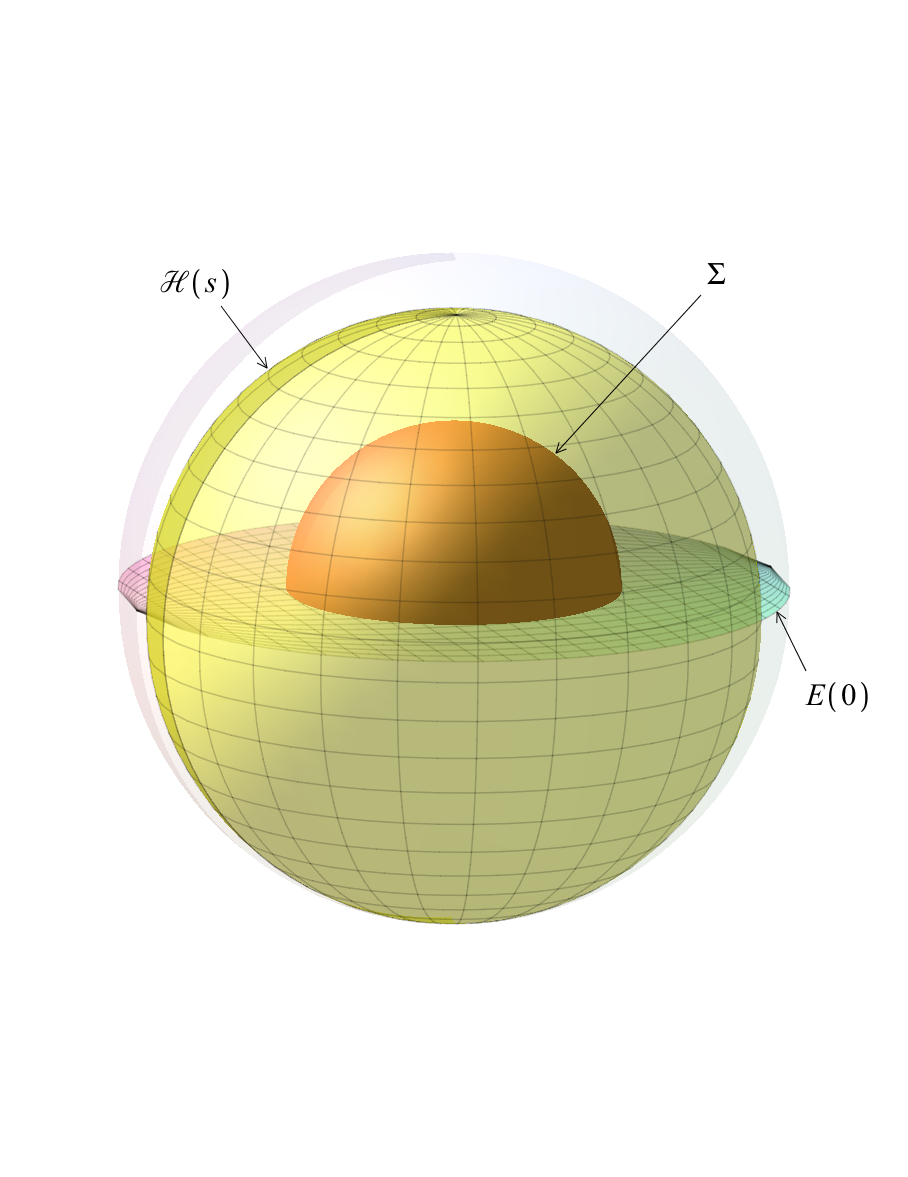}\label{grap411}}

   \subfloat[The horosphere $H(s)$ does not intersect the hypersurface $\Sigma$.]{\includegraphics[width=0.4\textwidth]{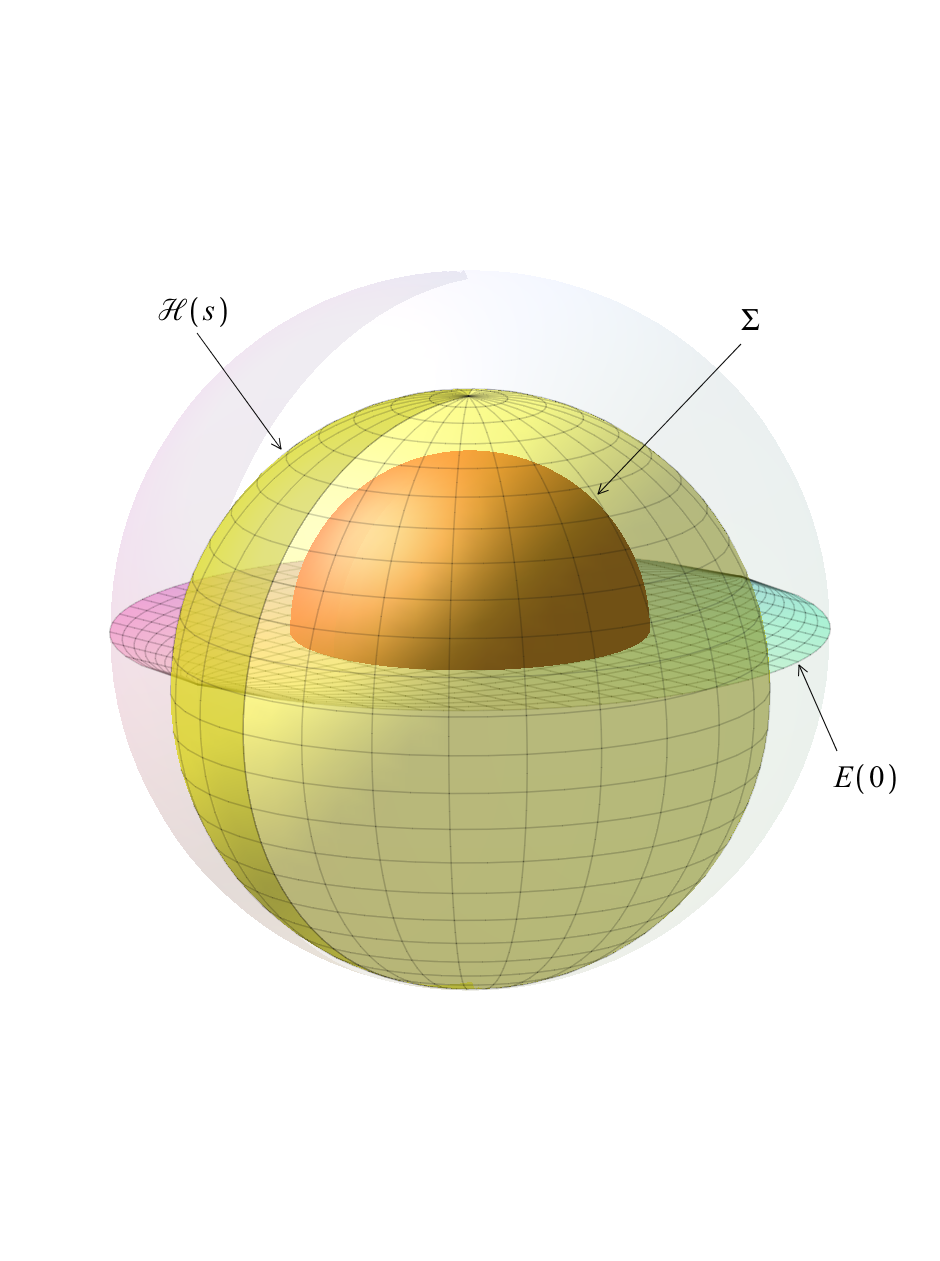}\label{grap412}}
   \hfill
   \subfloat[The horosphere $H(s_{1})$ touches the hypersurface $\Sigma$.]{\includegraphics[width=0.4\textwidth]{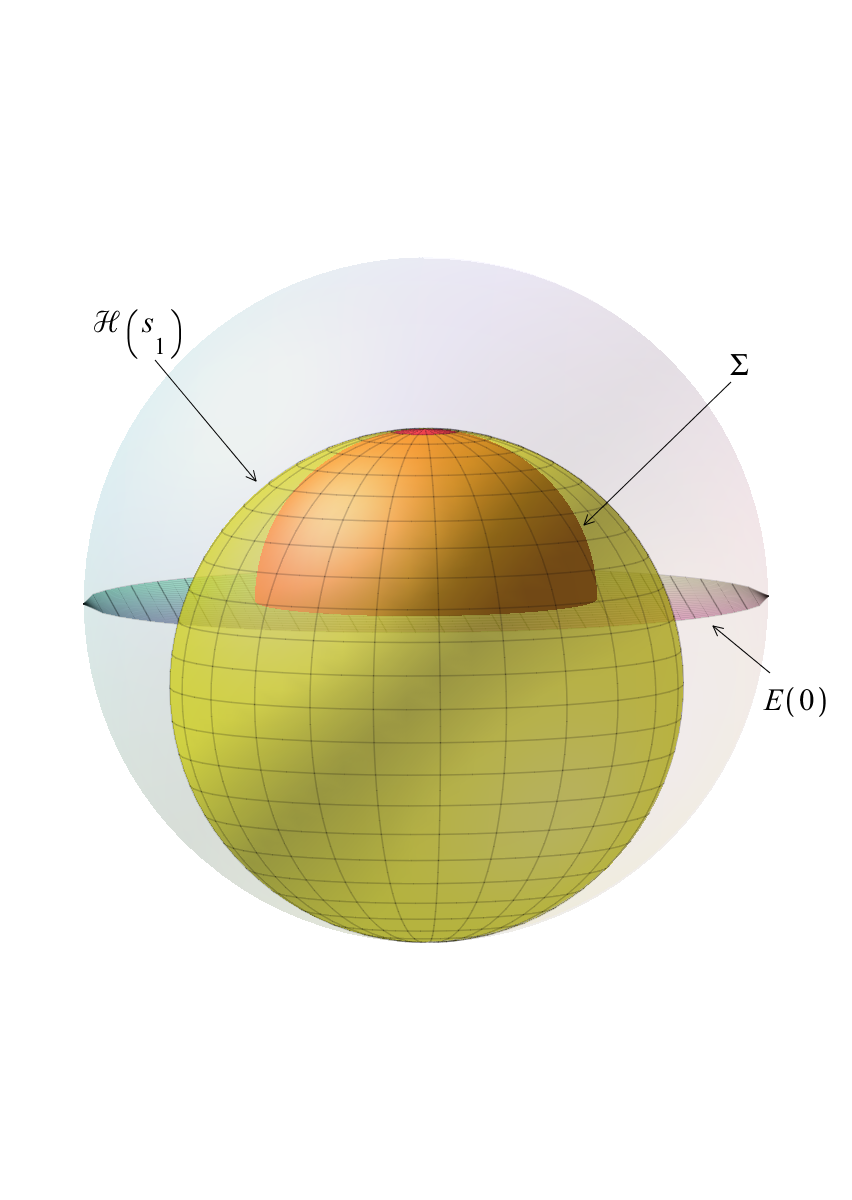}\label{grap413}}
   \caption{Getting the first contact horosphere in the Poincar\'{e} ball model. Case $c=0$.}\label{grap41}
\end{figure}

Since $\Sigma$ is compact, for $s$ large enough negatively, $\Sigma$ is completely contained in the mean-convex side of the horosphere $H(s)$ (cf. Figure \ref{grap410} when $c=0$). We continue increasing $s$ until we have the first contact point with $\Sigma$, let us say $s_{1}$ (cf. Figures \ref{grap411}, \ref{grap412} and \ref{grap413} when $c=0$). This means that, for every $s<s_{1}$, the hypersurface $\Sigma$ is in the interior of $H(s)$, $H(s)\cap \Sigma=\emptyset$, and $H(s_{1})\cap\Sigma$ is not empty.

Recall that
\begin{equation*}
  \cos(\alpha) = -\dfrac{c}{\sqrt{1+c^{2}}}\geq 0,
\end{equation*}since the angle $\alpha$ between the normal $\eta$ and the upward normal of $E(-c)$ is acute or $\pi/2$, we get that the contact point is at the interior of $\Sigma$.

The horosphere $H(s_{1})$ with its natural orientation has its own representation formula over $\s^{m}\setminus\{\bf s\}$.
We consider such formula restricted to $\ov{ \s^{m}_{+} }$ and we denote its support function by $\rho_{1}$. Since this horosphere is the first contact horosphere, we have that $\rho_{1}\geq\rho$ on $\ov{\s^{m}_{+}}$. Also, there is no boundary contact point of $\Sigma$, so there is $x\in\s^{m}_{+}$ such that $\rho_{1}(x)=\rho(x)$ and
\begin{equation*}
  \rho_{1}>\rho \text{ on }\partial\s^{m}_{+},
\end{equation*}
then, from \cite[Proposition 3.1]{YLi}, $\rho_{1}>\rho$ in $\ov{\s^{m}_{+}}$, which is a contradiction, since there is $x\in\s^{m}_{+}$ such that $\rho_{1}(x)=\rho(x)$.
\end{proof}

In the case that we consider a $m$-dimensional compact, simply-connected, locally conformally flat manifold $(\man,g_{0})$ with umbilic boundary and $R(g_{0})\geq 0 $ on $\man$, we have,

\begin{theorem}\label{Ch4:Theo.4.1.3}
Set $(f,\Gamma)$ an elliptic data for conformal metrics and $c\leq 0$ a constant. Let $(\man,g_{0})$ be a $m$-dimensional compact, simply-connected, locally conformally flat manifold with umbilic boundary and $R(g_{0})\geq 0 $ on $\man$. Then, there is no conformal metric $g=e^{2\rho}g_{0}$, $\rho\in C^{2,\alpha}\left( \man \right)$, such that
  \begin{equation*}
      \left\{
               \begin{array}{ccccl}
                 f( \lambda(g) )       &=& 0 & \text{ in } &  \man ,          \\
                 h(g) &=& c & \text{ on } & \partial \man ,
               \end{array}
      \right.
    \end{equation*}where $\lambda(g)=(\lambda_{1},\ldots,\lambda_{m})$ is composed by the eigenvalues of the Schouten tensor of the metric $g=e^{2\rho}g_{0}$.
\end{theorem}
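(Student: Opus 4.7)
The plan is to reduce the statement directly to Theorem \ref{Ch4:Theo.4.4} by means of Spiegel's classification result \cite{Spie}, which asserts that under the hypotheses on $(\man,g_0)$ (compact, simply-connected, locally conformally flat, umbilic boundary, $R(g_0)\geq 0$), there exists a conformal diffeomorphism $\Psi: \man \to \overline{B_r(p)} \subset \s^m$ onto a closed geodesic ball of the standard sphere. Up to composing with a conformal self-map of $\s^m$ sending $B_r(p)$ to the hemisphere, we may assume without loss of generality that the target is $\overline{\s^m_+}$.

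Suppose by contradiction that $g = e^{2\rho}g_0$ is a conformal metric on $\man$ solving the stated degenerate problem with $h(g) = c \leq 0$. Push $g$ forward to $\overline{\s^m_+}$ by setting $\tilde g := (\Psi^{-1})^* g$. Since $\Psi$ is conformal and $g$ is conformal to $g_0$, the metric $\tilde g$ is conformal to the standard metric $g_0$ of $\s^m$ restricted to the hemisphere, so we may write $\tilde g = e^{2\tilde\rho}g_0$ for some $\tilde\rho \in C^{2,\alpha}(\overline{\s^m_+})$. Because $\Psi$ is an isometry between $(\man,g)$ and $(\overline{\s^m_+},\tilde g)$, the Schouten tensors correspond pointwise: the eigenvalues satisfy $\lambda(\tilde g) = \lambda(g)\circ \Psi^{-1}$, and hence
\begin{equation*}
f(\lambda(\tilde g)) = 0 \quad \text{in } \overline{\s^m_+}.
\end{equation*}
Likewise, the boundary mean curvature is an intrinsic isometric invariant (modulo orientation), so $\tilde g$ has boundary mean curvature equal to $c \leq 0$ along $\partial \s^m_+$. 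This configuration is exactly ruled out by Theorem \ref{Ch4:Theo.4.4}, yielding the desired contradiction.

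The principal technical point to verify is that Spiegel's conformal diffeomorphism $\Psi$ has enough regularity (at least $C^{2,\alpha}$ up to the boundary) so that $\tilde\rho$ inherits the $C^{2,\alpha}(\overline{\s^m_+})$ regularity required by Theorem \ref{Ch4:Theo.4.4}; this is standard in the locally conformally flat setting with umbilic boundary and follows from elliptic regularity for the underlying conformal factor. One must also check the sign convention for the outward unit normal under $\Psi$, so that the boundary mean curvature transfers with the correct sign; since $\Psi$ is an isometry between $g$ and $\tilde g$ and preserves the outward direction, this is automatic. Once these routine checks are in place, the contradiction with Theorem \ref{Ch4:Theo.4.4} completes the proof.
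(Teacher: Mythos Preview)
Your proposal is correct and follows essentially the same approach as the paper: invoke Spiegel's result \cite{Spie} to obtain a conformal diffeomorphism from $(\man,g_0)$ onto a closed geodesic ball in $\s^m$ (equivalently $\overline{\s^m_+}$), push the conformal metric forward, and apply Theorem \ref{Ch4:Theo.4.4}. Your version is somewhat more explicit about how the Schouten eigenvalues and the boundary mean curvature transfer under the conformal isometry, but the underlying strategy is identical.
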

\begin{proof}
By a result of F. M. Spiegel \cite{Spie}, there exists a conformal diffeomorphism between the Riemannian manifold $(\man,g_{0})$ and the standard closed hemisphere $\ov{ \S^{m}_{+} }$. This is done by using the developing map $\Phi$ of Schoen-Yau \cite{SchoenYau} and observing that umbilicity is preserved under conformal transformations. Hence, the boundary $\partial \man $ maps into a hypersphere in $\s^{m}$ by $\Phi$ and therefore, $\Phi$ maps $\man$ into the interior of a ball. Thus, the statement follows from Theorem \ref{Ch4:Theo.4.4}.
\end{proof}

Observe that the above argument can be used into the non-degenerate case studied by Cavalcante-Espinar \cite{CE}. Specifically,

\begin{theorem}\label{Ch4:Theo.4.1.4}
Set $(f,\Gamma)$ an elliptic data for conformal metrics and $c\leq 0$ a constant. Let $(\man,g_{0})$ be a $m$-dimensional compact, simply-connected, locally conformally flat manifold with umbilic boundary and $R(g_{0})\geq 0 $ on $\man$. If there exists a conformal metric $g=e^{2\rho}g_{0}$, $\rho\in C^{2,\alpha}\left( \man \right)$, such that
  \begin{equation*}
      \left\{
               \begin{array}{ccccl}
                 f( \lambda(g) )       &=& 1 & \text{ in } &  \man ,          \\
                 h(g) &=& c & \text{ on } & \partial \man ,
               \end{array}
      \right.
    \end{equation*}where $\lambda(g)=(\lambda_{1},\ldots,\lambda_{m})$ is composed by the eigenvalues of the Schouten tensor of the metric $g=e^{2\rho}g_{0}$, then $\man$ is isometric to a geodesic ball in the standard sphere $\s ^{m}$.
\end{theorem}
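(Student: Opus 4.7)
The plan is to reduce the statement to the classification on the closed hemisphere $\ov{\s^{m}_{+}}$ obtained by Cavalcante-Espinar \cite{CE} (and independently by A.~Li and Y.Y.~Li \cite{LiLi3}) by means of Spiegel's conformal embedding \cite{Spie}. The argument runs exactly parallel to that of Theorem \ref{Ch4:Theo.4.1.3}, replacing the invocation of the degenerate classification (Theorem \ref{Ch4:Theo.4.4}) by its non-degenerate analog on the hemisphere.

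First, I would invoke Spiegel's theorem. Because $(\man, g_0)$ is compact, simply-connected, locally conformally flat with umbilic boundary and $R(g_0) \geq 0$, the Schoen-Yau developing map produces a conformal diffeomorphism $\Phi$ from $\man$ onto its image in $\s^{m}$. Since umbilicity of a hypersurface is a conformal invariant, $\Phi(\partial \man)$ is a compact totally umbilic hypersurface of $\s^{m}$, and hence a round sphere. Consequently $\Phi(\man) = \ov{B_r(p)}$ is a closed geodesic ball for some $p \in \s^{m}$ and $r \in (0,\pi)$.

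Next, I would transfer the problem to $\ov{B_r(p)}$. Set $\tilde g := (\Phi^{-1})^{*} g$; because $\Phi$ is conformal, $\tilde g$ is conformal to $g_0|_{\ov{B_r(p)}}$. The conformal invariance of $f(\lambda(\cdot))$ together with the usual transformation law for the boundary mean curvature imply that $\tilde g$ satisfies $f(\lambda(\tilde g)) = 1$ in $B_r(p)$ and $h(\tilde g) = c$ on $\partial B_r(p)$. Composing with a Möbius transformation $\Psi$ of $\s^{m}$ that carries $B_r(p)$ onto $\s^{m}_{+}$, I obtain a conformal metric $\hat g := (\Psi^{-1})^{*} \tilde g$ on $\ov{\s^{m}_{+}}$ satisfying $f(\lambda(\hat g)) = 1$ in $\s^{m}_{+}$ and $h(\hat g) = c$ on $\partial \s^{m}_{+}$. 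By the Cavalcante-Espinar hemisphere theorem recalled in the Introduction, there exists a conformal diffeomorphism $T : \s^{m} \to \s^{m}$ preserving $\s^{m}_{+}$ with $\hat g = T^{*}\bigl(g_{0}|_{\ov{\s^{m}_{+}}}\bigr)$. Reading these identities backwards yields $g = (T \circ \Psi \circ \Phi)^{*}\bigl(g_{0}|_{\ov{\s^{m}_{+}}}\bigr)$, so $\Xi := T \circ \Psi \circ \Phi$ is an isometry from $(\man, g)$ onto the hemisphere $(\ov{\s^{m}_{+}}, g_{0})$, which is a geodesic ball in $\s^{m}$.

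No essentially new analytic input is required beyond the cited results; the argument is a chain of conformal identifications. The main point to verify with care is that Spiegel's embedding actually lands inside a geodesic ball, and this reduces to the conformal invariance of umbilicity of the boundary together with the classification of compact totally umbilic hypersurfaces of $\s^{m}$. Once this geometric step is in place, the non-degenerate hemisphere classification takes over exactly as in the degenerate proof of Theorem \ref{Ch4:Theo.4.1.3}.
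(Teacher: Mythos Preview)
Your proof is correct and follows essentially the same approach as the paper's: use Spiegel's conformal embedding to transport the problem to a geodesic ball in $\s^{m}$, then apply the Cavalcante--Espinar hemisphere classification to conclude. The paper is somewhat terser---it cites \cite[Theorem~1.1]{CE} to say the pushforward metric has constant Schouten tensor and then invokes Spiegel once more for the isometry---while you spell out the intermediate M\"obius normalization to $\ov{\s^{m}_{+}}$ and chain the isometries explicitly, but the content is the same.
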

\begin{proof}
So, as above, there exists a conformal diffeomorphism from $\man $ to a ball into $\s ^m$ whose boundary has constant mean curvature. Moreover, the elliptic problem on $\man$ pass to an elliptic problem for the pushforward metric on the geodesic ball. Thus, by \cite[Theorem 1.1]{CE}, such pushforward metric has constant Schouten tensor, i.e., all the eigenvalues of the Schouten tensor are equal to the same constant. Hence, $\man$ is isometric to the geodesic ball in the sphere by the work of F. M. Spiegel \cite{Spie}.
\end{proof}

%%%%%%%%%%%%%%%%%%%%%%%%%%%%%%%%%%%%%%%%%%%%%%%%%%%%%%%%%%%%%%%%
%%%%%%%%%%%%%%%%%%%%%%%%%%%%%%%%%%%%%%%%%%%%%%%%%%%%%%%%%%%%%%%%

\section{Punctured geodesic ball}\label{punct}

Now, we see that any solution of a degenerate elliptic problem in the punctured geodesic ball $\ov{ \s^{m}_{+} }\setminus\{{\bf n}\}$ with minimal boundary is rotationally invariant.

\begin{theorem}\label{Ch4:Theo.4.4.11}
Let $g=e^{2\rho}g_{0}$ be a conformal metric in $\ov{ \s^{m}_{+} }\setminus\{{\bf n}\}$ that is solution of the following degenerate elliptic problem:
\begin{equation*}
  \left\{
               \begin{array}{cccc}
                 f(\lambda(g))       &=& 0 & \quad\text{in } \ov{ \s^{m}_{+} }\setminus\{{\bf n}\},          \\
                 h(g) &=& 0 & \quad\text{on }\partial \s^{m}_{+},
               \end{array}
      \right.
  \end{equation*}then $g$ is rotationally invariant.
\end{theorem}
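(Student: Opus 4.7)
The plan is to prove rotational invariance by applying the Alexandrov moving planes method to the horospherically concave hypersurface associated to $g$ via the Local Representation Theorem.

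By Subsection~\ref{sec3.6}, the statement is invariant under dilations $g\mapsto e^{2t_0}g$, so first pass to a dilation making every eigenvalue of $\mathrm{Sch}(g)$ strictly less than $1/2$, and then invoke the Local Representation Theorem to obtain an immersion $\phi\colon\ov{\s^m_+}\setminus\{\mathbf{n}\}\to\h^{m+1}$ with horospherical metric $g$. A further parallel flow combined with Theorem~\ref{Ch3:theo33} makes $\Sigma:=\phi(\ov{\s^m_+}\setminus\{\mathbf{n}\})$ embedded; Theorem~\ref{Ch3:prop38} with $c=0$ places $\Sigma$ in the half-space of $\h^{m+1}$ bounded by the totally geodesic $E(0)$ and containing $\mathbf{n}$ at its ideal boundary; and Proposition~\ref{proposition3.5} with $c=0$ yields $\partial\Sigma\subset E(0)$ with $\Sigma$ meeting $E(0)$ orthogonally along $\partial\Sigma$. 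I would then pass to the upper half-space model of $\h^{m+1}$, sending $\mathbf{n}$ to infinity and the south pole $\mathbf{s}$ to the origin of $\r^m$; in these coordinates $E(0)$ becomes the unit upper hemisphere $\{|x|^2+y^2=1,\,y>0\}$, and the rotation axis $\gamma_0$ from $\mathbf{n}$ to $\mathbf{s}$ is the vertical ray through the origin.

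For each unit vector $e\in\s^{m-1}\subset\r^m$, I would run moving planes with the one-parameter family of vertical totally geodesic hyperplanes $P^e_t:=\{(x,y):x\cdot e=t\}$; the corresponding reflection $R^e_t$ is a hyperbolic isometry that pointwise fixes $\mathbf{n}=\infty$. Beginning with $t$ large enough that the reflected cap $R^e_t(\Sigma\cap\{x\cdot e>t\})$ lies strictly on the opposite side of $\Sigma\cap\{x\cdot e<t\}$, and then decreasing $t$, let $t^*(e)$ denote the first critical value at which tangential contact occurs. At an interior contact, the support functions of $\Sigma$ and $R^e_{t^*(e)}(\Sigma)$ are tangent and ordered at the contact point and both solve $f(\lambda(\cdot))=0$, so Y.Y.\ Li's strong maximum principle for degenerate conformal equations \cite[Proposition 3.1]{YLi} forces $\Sigma=R^e_{t^*(e)}(\Sigma)$. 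At a boundary contact on $\partial\Sigma\subset E(0)$, the orthogonality of $\Sigma$ with $E(0)$ combined with the Hopf boundary point lemma yields the same conclusion.

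From $\Sigma=R^e_{t^*(e)}(\Sigma)$ I would deduce $t^*(e)=0$ by a topological argument: the reflection must send $\partial\Sigma$ to itself, hence $\partial\Sigma\subset E(0)\cap R^e_{t^*(e)}(E(0))$; a direct calculation shows that for $t^*(e)\neq 0$ this intersection equals $E(0)\cap P^e_{t^*(e)}$, a non-compact open $(m-1)$-disk in $E(0)$, whereas $\partial\Sigma$ is a compact $(m-1)$-submanifold of $E(0)$ diffeomorphic to $\partial\s^m_+\cong\s^{m-1}$, and by invariance of domain a compact boundaryless manifold cannot embed in a non-compact manifold of the same dimension. Hence $t^*(e)=0$ for every $e$, so $\Sigma$ is symmetric across every vertical totally geodesic hyperplane through the origin, i.e.\ invariant under the full group of hyperbolic rotations fixing $\gamma_0$. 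Via the representation formula \eqref{phi} this translates to rotational invariance of $g$ on $\ov{\s^m_+}\setminus\{\mathbf{n}\}$ about the axis through $\mathbf{n}$.

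The hardest step I anticipate is the asymptotic analysis at the puncture $\mathbf{n}$: I must justify that the initial configuration of moving planes is well posed (there is a $t$ for which the reflected cap is strictly separated from the remainder, despite $\Sigma$ being potentially unbounded in the $e$-direction when it is asymptotic to a horosphere at $\mathbf{n}$), and I must ensure that the first contact occurs at a finite point of $\Sigma$ rather than escaping to $\mathbf{n}$. The crucial leverage is that every $R^e_t$ pointwise fixes $\mathbf{n}$, so the asymptotic end of $\Sigma$ at $\mathbf{n}$ is carried to itself by $R^e_t$; combined with the embeddedness from Theorem~\ref{Ch3:theo33} and horospherical concavity, $\Sigma$ is kept transverse to the horosphere foliation centered at $\mathbf{n}$ throughout the deformation, which is precisely what makes the contact genuine and the maximum principle applicable.
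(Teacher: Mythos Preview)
Your approach has a genuine gap at the very outset. The theorem places \emph{no} hypothesis on the behaviour of $\rho$ near the puncture $\mathbf{n}$: the eigenvalues of ${\rm Sch}(g)$ may be unbounded on $\ov{\s^m_+}\setminus\{\mathbf{n}\}$, so there need not exist any single dilation making all of them strictly less than $1/2$, and hence the map $\phi$ need not be an immersion on the full domain. Even granting that, your invocations of Theorem~\ref{Ch3:theo33} (embeddedness after parallel flow) and Theorem~\ref{Ch3:prop38} (half-space property) both require extra control at $\mathbf{n}$---respectively, that $\sigma^2$ and $|\nabla\sigma|^2$ extend with the stated regularity to $\ov{\s^m_+}$, and that $e^{2\rho}+|\nabla\rho|^2\to\infty$ at $\mathbf{n}$---none of which is assumed here. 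You yourself flag the asymptotic analysis at $\mathbf{n}$ as the hard step, but what you sketch (that $R^e_t$ fixes $\mathbf{n}$ and hence ``carries the end to itself'') does not by itself guarantee that the moving-planes process can be started or that first contact occurs at a finite point; without quantitative control of $\Sigma$ near $\mathbf{n}$ (e.g.\ a barrier or a graphicality statement over a horosphere), this remains unproved.

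The paper bypasses all of this by never passing to the hypersurface side. It reflects $\rho$ across the equator to obtain $\tilde\rho$ on $\s^m\setminus\{\mathbf{n},\mathbf{s}\}$, checks $\tilde\rho\in C^1$ using the minimal boundary condition $\partial\rho/\partial\nu=0$, then checks $\tilde\rho\in C^2$ by showing $\nabla^2\rho(x)(e_{m+1})$ is parallel to $e_{m+1}$ on $\partial\s^m_+$ (so the Hessian glues), and finally applies Y.Y.~Li's classification \cite{YLi} for degenerate solutions on punctured spheres to conclude rotational invariance directly at the level of the conformal factor. This route needs no boundedness of ${\rm Sch}(g)$, no embeddedness, and no asymptotic analysis---Li's result already contains the moving-planes argument in the PDE setting and handles arbitrary isolated singularities.
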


\begin{proof}
Let us define $\tilde{\rho}:\s^{m}\setminus \{ \bf n,s \} \to\r$ as
\begin{equation*}
  \tilde{\rho}(x)=\left\{\begin{array}{cc}
                   \rho(x_{1},\ldots,x_{m}) & x\in\ov{\s^{m}_{+}}\setminus\{\bf n\}, \\
                   \rho(x_{1},\ldots,-x_{m}) & x\in\ov{\s^{m}_{-}}\setminus\{\bf s\} .
                 \end{array}
             \right.
\end{equation*}

First, we show that $\tilde{\rho}$ is $C^{1}$. Since
\begin{equation*}
  \frac{\partial \rho}{\partial x_{m+1}}=0 \text{ on }\partial \s^{m}_{+}
\end{equation*}
then $\tilde{\rho}\in C^{1}\left( \s^{m}\setminus\{\bf n,s\}\right)$. Then, the vector field $\nabla\tilde{\rho}:\s^{m}\setminus\{{\bf n,s}\}\to T\s^{m}$ given by
\begin{equation*}
  \nabla\tilde{\rho}(x)=\left\{\begin{array}{cc}
                   \nabla\rho(x) & x\in\ov{\s^{m}_{+}}\setminus\{\bf n\}, \\
                   R\nabla\rho (R(x)) & x\in\ov{\s^{m}_{-}}\setminus\{\bf s\},
                 \end{array}
             \right.
\end{equation*}
where $R:\r^{m+1}\to\r^{m+1}$ is the Euclidean reflection $R(x_{1},\ldots,x_{m})=(x_{1},\ldots,-x_{m})$, $(x_{1},\ldots,x_{m})\in\r^{m+1}$, is continuous.

Now, let us see that $\tilde{\rho}\in C^{2}$, that is, $\nabla\tilde\rho$ is $C^{1}$. Let $X_{1}=\nabla\rho|_{ \ov{\s^{m}_{+}}\setminus\{{\bf n}\} }$ and $X_{2}=\nabla\tilde{\rho}|_{ \ov{\s^{m}_{-}}\setminus\{ {\bf s} \} }$. Since $X_{1}=X_{2}$ on $\partial\s^{m}_{+}$, given $x\in\partial\s^{m}_{+}$ and $v\in T_{x}(\partial\s^{m}_{+})$ we have $\nabla_{v}X_{1}=\nabla_{v}X_{2}$.

Also, assume for a moment that $R(\nabla^{2}\rho(x)(e_{m+1}))=-\nabla^{2}\rho(x)(e_{m+1})$ for all $x\in\partial\S^{m}_{+}$. We have that for $x\in\partial\s^{m}_{+}$ and $v_{m}=e_{m+1}$,
\begin{equation*}
\begin{split}
  \nabla_{-v_{m}}X_{2} &=\frac{\partial X_{2}}{\partial (-v_{m})}(x)-<X_{2}(x),-v_{m}>x=\frac{\partial X_{2}}{\partial (-v_{m})}(x) \\
 &=R\nabla_{v_{m}}X_{1}=R \left( \nabla^{2}\rho(x)(e_{m+1}) \right) =-\nabla_{v_{m}}X_{1}.
\end{split}
\end{equation*}then $\nabla\tilde{\rho}$ is $C^{1}$, hence $\tilde{\rho}$ is $C^{2}$.

Since $\tilde{\rho}:\s^{m}\setminus\{{\bf n,s}\}\to\r$ is $C^{2}$ and it is a solution of a degenerate problem, from \cite{YLi}, $e^{2\tilde{\rho}}g_{0}$ is rotationally invariant, thus $g$ is rotationally invariant.

In order to conclude the proof, let us see that $R(\nabla^{2}\rho(x)(e_{m+1}))=-\nabla^{2}\rho(x)(e_{m+1})$ for all $x\in\partial\S^{m}_{+}$. For every $v\in T_{x}\partial\s^{m}_{+}$ we have that $<\nabla^{2}\rho(x)(v),e_{m+1}>=0$ then $\nabla^{2}\rho_{x}(V)\subset V$ where $V=T_{x}\partial\s^{m}_{+}$. Since $\nabla^{2}\rho(x):V\to V$ is symmetric, $\nabla^{2}\rho(x)(e_{m+1})\| e_{m+1}$. Thus $R(\nabla^{2}\rho(x)(e_{m+1}))=-\nabla^{2}\rho(x)(e_{m+1})$.
\end{proof}

\begin{definition}\label{Ch4:Def:Punct}
We say that a conformal metric $g=e^{2\rho}g_{0}$ in $\ov{ \S^{m}_{+} }\setminus\{{\bf n}\}$ is a \textit{punctured solution} of the problem
\begin{equation}\label{exequation}
  \left\{
               \begin{array}{cccc}
                 f(\lambda(g))       &=& 0 & \quad\text{in } \S^{m}_{+}\setminus \{{\bf n}\} ,          \\
                 h(g) &=& 0 & \quad\text{on }\partial \S^{m}_{+},
               \end{array}
      \right.
\end{equation}
if it is a solution of (\ref{exequation}) and $\sigma=e^{-\rho}$ admits a $C^2$-extension $\tilde{\sigma}:\ov{ \S^{m}_{+} } \to \mathbb{R}$ with $\tilde{\sigma}( {\bf n} )=0$.
\end{definition}

As a first observation, we have:
\begin{proposition}
Let $g=e^{2\rho}g_{0}$ be a punctured solution of (\ref{exequation}), then the associated map $\varphi_{P}:\ov{ \S^{m}_{+}}\setminus\{{\bf n }\} \to \r^{m+1}$ in the Poincar\'{e} ball model, i.e.,
   \begin{equation*}
      \varphi_{P}(x)=\frac{1-e^{-2\rho(x)}+\left|\nabla e^{-\rho(x)} \right|^{2}}{ \left( 1+e^{-\rho(x)} \right)^{2}+\left|\nabla e^{-\rho(x)} \right|^{2}}x-\frac{1}{ \left( 1+e^{-\rho(x)} \right)^{2}+\left|\nabla e^{-\rho(x)}\right|^{2}}\nabla\left( e^{-2\rho}\right)(x),
   \end{equation*}
for all $x\in\ov{ \S^{m}_{+}}\setminus\{{\bf n }\}$, admits a $C^{1}$- extension to $\ov{ \S^{m}_{+} }$.
\end{proposition}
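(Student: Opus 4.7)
The plan is to rewrite $\varphi_{P}$ in terms of $\sigma=e^{-\rho}$, observe that the punctured-solution hypothesis makes every ingredient of the formula a $C^{1}$ function on all of $\overline{\mathbb{S}^{m}_{+}}$, and verify that the denominator stays strictly positive at ${\bf n}$.

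In terms of $\sigma$, the representation formula reads
\begin{equation*}
\varphi_{P}(x)=\frac{1-\sigma^{2}(x)+|\nabla\sigma(x)|^{2}}{(1+\sigma(x))^{2}+|\nabla\sigma(x)|^{2}}\,x\;-\;\frac{\nabla(\sigma^{2})(x)}{(1+\sigma(x))^{2}+|\nabla\sigma(x)|^{2}}.
\end{equation*}
By Definition \ref{Ch4:Def:Punct}, $\sigma$ admits an extension $\tilde\sigma\in C^{2}(\overline{\mathbb{S}^{m}_{+}})$ with $\tilde\sigma({\bf n})=0$. Consequently $\tilde\sigma$ and $\tilde\sigma^{2}$ belong to $C^{2}(\overline{\mathbb{S}^{m}_{+}})$, while $\nabla\tilde\sigma$, $|\nabla\tilde\sigma|^{2}=\langle\nabla\tilde\sigma,\nabla\tilde\sigma\rangle$ and $\nabla(\tilde\sigma^{2})=2\tilde\sigma\,\nabla\tilde\sigma$ all lie in $C^{1}(\overline{\mathbb{S}^{m}_{+}})$. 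Substituting $\tilde\sigma$ for $\sigma$ in the formula above produces a candidate $C^{1}$-extension $\tilde\varphi_{P}$ that agrees with $\varphi_{P}$ on $\overline{\mathbb{S}^{m}_{+}}\setminus\{{\bf n}\}$.

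To conclude, I would check positivity of the denominator. Off ${\bf n}$ one has $\tilde\sigma>0$, hence $(1+\tilde\sigma)^{2}\geq 1$; at ${\bf n}$ the denominator equals $1+|\nabla\tilde\sigma({\bf n})|^{2}\geq 1$. Thus the denominator is bounded below by $1$ on $\overline{\mathbb{S}^{m}_{+}}$, so the quotient is genuinely $C^{1}$, and $\tilde\varphi_{P}$ is the required extension. A direct evaluation at ${\bf n}$ gives $\tilde\varphi_{P}({\bf n})={\bf n}$, reflecting the geometric fact that the associated horospherically concave hypersurface $\Sigma$ reaches the ideal boundary of $\mathbb{H}^{m+1}$ precisely at the puncture. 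The argument is essentially algebraic; the only subtle point is recognising that one really needs the full $C^{2}$-regularity of $\tilde\sigma$ in order to guarantee $\nabla\tilde\sigma\in C^{1}$, which in turn is what makes $|\nabla\tilde\sigma|^{2}$ and $\nabla(\tilde\sigma^{2})$ of class $C^{1}$, and this is exactly what Definition \ref{Ch4:Def:Punct} supplies.
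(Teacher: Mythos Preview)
Your argument is correct. The paper actually states this proposition as a direct observation and gives no proof at all; your proposal supplies precisely the routine verification the authors leave implicit, namely rewriting $\varphi_{P}$ in terms of the $C^{2}$ extension $\tilde\sigma$, noting that $\tilde\sigma^{2}$, $|\nabla\tilde\sigma|^{2}$ and $\nabla(\tilde\sigma^{2})$ are then $C^{1}$, and checking that the denominator is bounded below by $1$.
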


  With this result, we can conclude:
  \begin{corollary}\label{Ch4:Punc}
     Punctured solutions are rotationally invariant.
  \end{corollary}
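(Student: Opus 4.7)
The plan is to deduce Corollary \ref{Ch4:Punc} as an immediate consequence of Theorem \ref{Ch4:Theo.4.4.11}. By Definition \ref{Ch4:Def:Punct}, every punctured solution $g = e^{2\rho}g_{0}$ of (\ref{exequation}) is in particular a $C^{2,\alpha}$ conformal metric on $\ov{\s^{m}_{+}}\setminus\{{\bf n}\}$ satisfying the degenerate elliptic system with $h(g) = 0$ along $\partial \s^{m}_{+}$. Since this is precisely the hypothesis of Theorem \ref{Ch4:Theo.4.4.11}, I would simply invoke that theorem to conclude that $g$ is rotationally invariant. In practice, the corollary is a direct restatement of Theorem \ref{Ch4:Theo.4.4.11} in the subclass of solutions with the $C^{2}$-extension property of $\sigma = e^{-\rho}$ at ${\bf n}$.

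If one wanted a self-contained geometric argument in the spirit of Section \ref{SectLocal}, the plan would be as follows. First, I would use the preceding proposition to extend $\varphi_{P}$ as a $C^{1}$ map to all of $\ov{\s^{m}_{+}}$, so that the associated horospherically concave hypersurface $\Sigma$ closes up at the puncture, with $\varphi_{P}({\bf n}) \in \partial_{\infty}\h^{m+1}$ corresponding to $\tilde{\sigma}({\bf n}) = 0$. Next, since $h(g) = 0$, Proposition \ref{Ch3:Prop10} places $\varphi_{P}(\partial \s^{m}_{+})$ in the totally geodesic hyperplane $E(0)$, and Proposition \ref{proposition3.5} (with $c = 0$) gives orthogonal contact of $\Sigma$ with $E(0)$. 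Reflecting $\Sigma$ across $E(0)$ then yields a closed horospherically concave hypersurface in $\h^{m+1}$ with two antipodal ideal endpoints ${\bf n}, {\bf s} \in \partial_{\infty}\h^{m+1}$; this reflection is the geometric counterpart of the doubling of $\rho$ across the equator carried out in the proof of Theorem \ref{Ch4:Theo.4.4.11}. Finally, applying the symmetry result of Y.Y. Li \cite{YLi} to the doubled $C^{2}$ solution, whose associated $\tilde{\sigma}$ vanishes cleanly at both punctures, produces rotational symmetry about the axis ${\bf n}\, {\bf s}$, and hence rotational invariance of $g$.

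The only point to verify carefully---and really the sole reason the corollary is stated separately---is that the regularity built into the definition of a punctured solution is exactly what is needed for the doubled problem on $\s^{m} \setminus \{{\bf n}, {\bf s}\}$ to fall within the scope of \cite{YLi}. Beyond this bookkeeping, there is no substantive obstacle: all the heavy lifting (doubling plus Y.Y. Li's symmetry theorem) is already packaged inside Theorem \ref{Ch4:Theo.4.4.11}, and the preceding proposition ensures that the hypersurface side of the picture is geometrically consistent at the puncture.
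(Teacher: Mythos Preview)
Your proposal is correct and matches the paper's approach: the corollary is stated without proof immediately after Theorem \ref{Ch4:Theo.4.4.11} and the proposition on the $C^{1}$-extension of $\varphi_{P}$, and is clearly intended as an immediate consequence of Theorem \ref{Ch4:Theo.4.4.11}, since a punctured solution is by definition a solution of the degenerate problem on $\ov{\s^{m}_{+}}\setminus\{{\bf n}\}$ with minimal boundary. Your supplementary geometric sketch is consistent with the paper's methods but is not needed here.
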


In the case of  a non-degenerate elliptic problem, we have the following

\begin{theorem}\label{Ch4:Theo.4.4.12}
Let $g=e^{2\rho}g_{0}$ be a conformal metric in $\ov{\s^{m}_{+}}\setminus\{{\bf n}\}$ that is solution of the following non-degenerate elliptic problem:
  \begin{equation*}
  \left\{
               \begin{array}{cccc}
                 f(\lambda(g))       &=& 1 & \quad\text{in } \ov{\s^{m}_{+}}\setminus\{{\bf n}\},          \\
                 h(g) &=& 0 & \quad\text{on }\partial \s^{m}_{+}, \\
               \end{array}
      \right.
  \end{equation*}then $g$ is rotationally invariant.
\end{theorem}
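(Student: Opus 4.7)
The strategy mirrors that of Theorem \ref{Ch4:Theo.4.4.11}: reflect the metric across the equator $\partial\s^{m}_{+}$ to produce a $C^{2}$ conformal metric on the twice-punctured sphere, and then invoke a classification result for solutions of the non-degenerate conformally invariant equation on $\s^{m}\setminus\{{\bf n},{\bf s}\}$ to conclude rotational symmetry. The passage from degenerate to non-degenerate amounts only to replacing the right-hand side of the equation by $1$, since the reflection procedure is insensitive to this choice.

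First I would observe that, since $\partial\s^{m}_{+}$ is totally geodesic in $(\s^{m},g_{0})$, the mean curvature of $\partial\s^{m}_{+}$ with respect to $g_{0}$ is $h_{0}=0$. By \eqref{Ch3:equaMean}, the condition $h(g)=0$ along $\partial\s^{m}_{+}$ together with $h_{0}=0$ yields $\partial\rho/\partial\nu =0$ on $\partial\s^{m}_{+}$, where $\nu=e_{m+1}$ is the unit normal along the equator. Define
\begin{equation*}
  \tilde{\rho}(x)=\left\{\begin{array}{cc}
                   \rho(x_{1},\ldots,x_{m+1}) & x\in\overline{\s^{m}_{+}}\setminus\{{\bf n}\}, \\
                   \rho(x_{1},\ldots,-x_{m+1}) & x\in\overline{\s^{m}_{-}}\setminus\{{\bf s}\}.
                 \end{array}
             \right.
\end{equation*}
The verification that $\tilde{\rho}\in C^{2}(\s^{m}\setminus\{{\bf n},{\bf s}\})$ is exactly the one carried out in the proof of Theorem \ref{Ch4:Theo.4.4.11}: continuity across $\partial\s^{m}_{+}$ is immediate, the $C^{1}$ regularity follows from $\partial\rho/\partial\nu =0$, and the $C^{2}$ regularity uses that the Hessian of $\rho$ at a boundary point sends $T_{x}\partial\s^{m}_{+}$ into itself and therefore $\nabla^{2}\rho(x)(e_{m+1})$ is parallel to $e_{m+1}$, which is precisely the compatibility needed for the normal derivative of the reflected gradient.

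Next, the extended metric $\tilde g = e^{2\tilde\rho}g_{0}$ satisfies $f(\lambda(\tilde g))=1$ on $\s^{m}\setminus\{{\bf n},{\bf s}\}$: on the upper hemisphere this is the hypothesis, and on the lower hemisphere the equation is preserved by pullback under the isometric reflection $R(x_{1},\ldots,x_{m+1})=(x_{1},\ldots,-x_{m+1})$ of $(\s^{m},g_{0})$, because $f$ is symmetric in the eigenvalues of the Schouten tensor and the Schouten tensor of $R^{*}\tilde g$ equals $R^{*}{\rm Sch}(\tilde g)$. Finally I would apply the classification theorem of A. Li and Y.Y. Li \cite{LiLi1,LiLi2} for solutions of the non-degenerate conformally invariant equation $f(\lambda(g))=1$ on $\s^{m}$ minus (at most) two points: any such $C^{2}$ solution is rotationally invariant around the axis joining the two punctures. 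Thus $\tilde g$, and hence $g=\tilde g|_{\overline{\s^{m}_{+}}\setminus\{{\bf n}\}}$, is rotationally invariant.

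The only non-routine step is the $C^{2}$ regularity of the reflected function $\tilde\rho$ at the equator; once this is in place, the result is a direct consequence of the Liouville-type classification of Li-Li on $\s^{m}\setminus\{{\bf n},{\bf s}\}$. This is the same structure as the degenerate case, with the sole change being the invocation of the non-degenerate rather than degenerate classification.
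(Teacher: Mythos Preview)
Your reflection step is correct and mirrors Theorem~\ref{Ch4:Theo.4.4.11}, but the final classification you invoke does not exist in the form you state. The papers \cite{LiLi1,LiLi2} prove a Liouville theorem on the \emph{full} sphere (or $\mathbb{R}^m$); the isolated--singularity result you need is in \cite{YLi06}, and it asserts radial symmetry about the puncture only when the singularity is \emph{non-removable}. If both singularities of your reflected solution $\tilde g$ on $\mathbb{S}^m\setminus\{{\bf n},{\bf s}\}$ are removable, then $\tilde g$ extends to all of $\mathbb{S}^m$ and, by the Liouville theorem, is a standard bubble $\Phi^{*}g_{0}$; such a bubble is rotationally symmetric about \emph{some} axis, but not in general about the ${\bf n}$--${\bf s}$ axis (take $\Phi$ a conformal dilation toward an equatorial point). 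So the blanket statement ``any $C^{2}$ solution on $\mathbb{S}^{m}\setminus\{{\bf n},{\bf s}\}$ is rotationally invariant about the axis through the punctures'' is false, and this is exactly where your analogy with the degenerate case breaks down: for $f(\lambda)=0$ the Li result \cite{YLi} gives radial symmetry on $\mathbb{R}^{m}\setminus\{0\}$ unconditionally, but for $f(\lambda)=1$ it does not.

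The paper avoids this by splitting into two cases. If $\rho$ extends $C^{2}$ across ${\bf n}$, one is on the closed hemisphere and applies the Cavalcante--Espinar theorem \cite{CE} directly (no reflection needed). If $\rho$ does not extend, the singularity at ${\bf n}$ is genuine and \cite{YLi06} yields rotational symmetry. Your reflection argument can be salvaged by inserting exactly this dichotomy after producing $\tilde\rho$ on $\mathbb{S}^{m}\setminus\{{\bf n},{\bf s}\}$: either the punctures are removable and you land in the hemisphere case, or they are not and \cite{YLi06} applies on $\mathbb{R}^{m}\setminus\{0\}$. As written, though, the proposal collapses both cases into a single citation that does not carry the removable case.
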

\begin{proof}
If $\rho$ admits a smooth extension $\tilde{\rho}:\ov{ \s^{m}_{+} }\to\r$ (just $C^2$) then the conformal metric $g_{1}=e^{2\tilde{\rho}}g_{0}$, defined in $\ov{ \s^{m}_{+} }$, is a solution of a non-degenerate problem with constant mean curvature on its boundary. Then $g_{1}$ is rotationally invariant, so, $g$ is rotationally invariant.

If $\rho$ does not admit smooth extension on $\ov{ \s^{m}_{+} }$, then by \cite{YLi06}, $\rho$ is rotationally invariant, then $g$ is rotationally invariant. This concludes the proof.
\end{proof}

%%%%%%%%%%%%%%%%%%%%%%%%%%%%%%%%%%%%%%%%%%%%%%%%%%%%%%%%%%%%%%%
%%%%%%%%%%%%%%%%%%%%%%%%%%%%%%%%%%%%%%%%%%%%%%%%%%%%%%%%%%%%%%%

\section{Compact Annulus}\label{CompactAnnulus}

We consider an annulus in $\s ^m$ whose boundary components are geodesic spheres, that is, the domains we will consider in this section are the sphere $\s ^m$ minus two geodesic balls.  Observe that, up to a conformal diffeomorphism, we can assume that the compact annulus is  $\ov{ \mathbb{A}(r) }$, $r\in(0,\pi/2)$, with minimal boundary. Here,
$$ \mathbb{A}(r) = \set{ x \in \s ^m \, : \, \, r < d_{\s^m}(x,{\bf n})< \pi /2 }. $$

We prove:

\begin{theorem}\label{Ch4:Theo.4.3.2}
Set $r\in(0,\pi/2)$. If there is a solution $g=e^{2\rho}g_{0}$ of the following problem
  \begin{equation*}
  \left\{
               \begin{array}{cccc}
                 f(\lambda(g))       &=& 0 & \quad\text{in } \ov{ \mathbb{A}(r)},          \\
                 h(g) &=& 0 & \quad\text{on }\partial \mathbb{A}(r),
               \end{array}
      \right.
\end{equation*}then $g$ is rotationally invariant and unique up to dilations.
\end{theorem}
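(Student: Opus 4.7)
The plan is to adapt the Alexandrov moving-plane argument used by Cavalcante--Espinar in the non-degenerate case to the present degenerate setting, replacing the classical strong maximum principle by Y.Y.~Li's version (as already invoked in the proof of Theorem~\ref{Ch4:Theo.4.4}), and then to conclude uniqueness by reducing the rotationally invariant problem to an ODE with a built-in dilation symmetry. Dilating $g$ by a sufficiently large factor $e^{2t_0}$, which by Proposition~\ref{Ch3:ParalProblem} only rescales the elliptic data, we may assume that every eigenvalue of ${\rm Sch}(g)$ is less than $1/2$ and, by Theorem~\ref{Ch3:theo33}, that $\phi:\overline{\mathbb{A}(r)}\to\mathbb{H}^{m+1}$ is an embedded horospherically concave hypersurface $\Sigma$. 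Since $h(g)\equiv 0$ on both components of $\partial\mathbb{A}(r)$, Proposition~\ref{Ch3:Prop10} places
$$\phi(\partial B_r({\bf n}))\subset E(a_1,0),\qquad \phi(\partial\mathbb{S}^m_+)\subset E(a_2,0),$$
with $E(a_i,0)$ totally geodesic hyperplanes of $\mathbb{H}^{m+1}$, and Proposition~\ref{proposition3.5} (with $c=0$) shows that $\Sigma$ meets each of them orthogonally. Because $r<\pi/2$, the spheres $\partial B_r({\bf n})$ and $\partial\mathbb{S}^m_+$ are disjoint in $\mathbb{S}^m$, so $E(a_1,0)$ and $E(a_2,0)$ are ultraparallel and admit a unique common perpendicular geodesic $\gamma\subset\mathbb{H}^{m+1}$ whose endpoints at infinity are ${\bf n}$ and ${\bf s}$.

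For the symmetry, let $\{P_\theta\}$ be the $(m-1)$-parameter family of totally geodesic hyperplanes containing $\gamma$; each $P_\theta$ is a plane of symmetry of both $E(a_1,0)$ and $E(a_2,0)$. The orthogonal intersection of $\Sigma$ with each $E(a_i,0)$ allows us to reflect $\rho$ across each component of $\partial\mathbb{A}(r)$ (viewed as a hypersphere of $\mathbb{S}^m$, via the conformal involution of $\mathbb{S}^m$ induced by the hyperbolic reflection across $E(a_i,0)$) to obtain a $C^{2,\alpha}$ solution of the same degenerate problem on a doubled domain, in the spirit of the proof of Theorem~\ref{Ch4:Theo.4.4.11}. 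Running the standard moving-plane procedure on this extended solution, any first-contact configuration between the reflected and unreflected pieces of $\Sigma$ is interior (the boundary case being absorbed into the doubling), and an interior contact of two horospherically concave hypersurfaces associated to solutions of $f(\lambda(\cdot))=0$ forces global coincidence via \cite[Proposition 3.1]{YLi}. Consequently $\Sigma$ is symmetric across every $P_\theta$, hence rotationally invariant about $\gamma$, and $g$ is invariant under the rotations of $\mathbb{S}^m$ fixing $\{{\bf n},{\bf s}\}$.

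Rotational invariance reduces $\rho$ to a function of $t=d_{\mathbb{S}^m}(\cdot,{\bf n})\in[r,\pi/2]$, turning $f(\lambda(g))=0$ into a second-order degenerate ODE for $\rho(t)$ equipped with Neumann-type boundary data at $t=r$ and $t=\pi/2$ coming from $h(g)=0$ through~\eqref{Ch3:equaMean}. Since $f$ is homogeneous of degree one, the shift $\rho\mapsto\rho+c$ rescales the eigenvalues of the Schouten tensor by $e^{-2c}$ and hence preserves both $\{f=0\}$ and, by~\eqref{Ch3:Eq:meacurflow}, the minimality of the boundary. A phase-plane analysis of this reduced ODE shows that the one-parameter family $\{\rho+c\}_{c\in\mathbb{R}}$ is in fact its complete set of solutions, giving uniqueness up to dilations. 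The principal obstacle is the use of a strong maximum principle in the degenerate regime: classical interior and Hopf-type arguments fail, and one must rely on Li's refined version for equations of the form $f(\lambda(g))=0$; the delicate point is to verify that, throughout the moving-plane procedure on the doubled solution, the difference of the two horospherical height functions retains the sign required by Li's principle, so that it can indeed be applied at the first-contact point.
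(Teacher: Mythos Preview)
Your treatment of rotational invariance follows the same geometric skeleton as the paper: dilate, embed via the representation formula, use orthogonality at the boundary to reflect across the two totally geodesic hyperplanes, and obtain a $C^{2}$ solution of the degenerate problem on $\mathbb{S}^{m}\setminus\{\mathbf{n},\mathbf{s}\}$. The paper then simply invokes the classification result of \cite{YLi} on this punctured sphere to conclude rotational invariance, whereas you sketch the moving-plane mechanism yourself. That is a legitimate alternative, but note that the step you flag as the ``principal obstacle''---verifying that the sign condition needed for \cite[Proposition~3.1]{YLi} persists at every stage of the reflection---is precisely what \cite{YLi} already packages; the paper's route avoids re-proving it.

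The genuine gap is in your uniqueness argument. You reduce to a rotationally invariant profile $\rho=\rho(t)$ and assert that a ``phase-plane analysis'' identifies $\{\rho+c\}_{c\in\mathbb{R}}$ as the complete solution set of the resulting ODE with the Neumann data from~\eqref{Ch3:equaMean}. But the degenerate condition $f(\lambda(g))=0$ is the constraint $\lambda(g)\in\partial\Gamma$, and by hypothesis $f$ is only $C^{0}$ on $\overline{\Gamma}$ (with $C^{1}$ regularity only in the open cone). The implicit relation one extracts for $\rho''$ in terms of $(\rho,\rho')$ therefore need not be Lipschitz, so neither the Picard--Lindel\"of theorem nor standard phase-plane uniqueness applies. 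Absent additional structure on $(f,\Gamma)$, the ODE route does not yield uniqueness.

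The paper bypasses this analytic difficulty entirely with a geometric comparison: given two solutions $g$ and $g_{1}$, it pushes the hypersurface $\Sigma_{1}$ associated to $g_{1}$ along the parallel flow until it is disjoint from $\Sigma$, then brings it back to first contact. Repeated use of \cite[Proposition~3.1]{YLi}, combined with further reflections across $E(0)$ and $E(a,0)$, forces contact points on both boundary components and indeed at every level $\{d_{\mathbb{S}^{m}}(\cdot,\mathbf{n})=s\}$. Since both solutions are already known to be rotationally invariant, contact at every level gives $\rho_{1}\equiv\rho$ after the flow, i.e.\ the two metrics differ by a dilation. This argument uses only the weak comparison principle for degenerate equations and never appeals to ODE uniqueness; that is what you should replace your phase-plane step with.
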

\begin{proof}
Let $g_{1}$ be a solution of the above degenerate problem. First, we show that $g_{1}$ is rotationally invariant. Using the same techniques than  Cavalcante-Espinar in \cite{CE}, we get a conformal metric in $\s^{m}\setminus\{\bf n,s\}$ that is solution of the degenerate problem. Let us sketch this for the reader convenience. Using the representation formula, the horospherical hypersurface associated to $g_{1}$ is contained in the slab determined by two parallel hyperplanes and, by the boundary condition, the boundaries meet orthogonally such hyperplanes (cf. Figure \ref{gr4c1}). Thus, we can reflect this annulus with respect to the hyperplanes and we obtain an embedded complete horospherically concave hypersurface $\tilde{\Sigma_{1}}$ whose boundary at infinity are the north and south pole and, by construction, the extension $\tilde{\Sigma_{1}}$ of $\Sigma_{1}$ is contained in the interior of an equidistant hypersurface to the geodesic joining the north and south pole, the radius of such equidistant is determined by the original annulus, then its horospherical metric $\tilde{g_{1}}$ is a solution to the degenerate problem in $\s^{m}\setminus\{\bf n,s\}$ (cf. Figure \ref{gr4c2}). In fact, the metric $\tilde{g_{1}}$ is $C^{2}$ in $\s^{m}\setminus\{\bf n,s\}$ (one can proceed analogously to the proof given in Theorem \ref{Ch4:Theo.4.4.11}). From \cite{YLi}, we have that $\tilde{g_{1}}$ is rotationally invariant, thus $g_{1}$ is rotationally invariant.

  \begin{figure}[!tbp]
   \subfloat[The hypersurface $\Sigma$ is between two totally geodesic hypersurfaces: $E(a,0)$ and $E(0)$. ]{\includegraphics[width=0.4\textwidth]{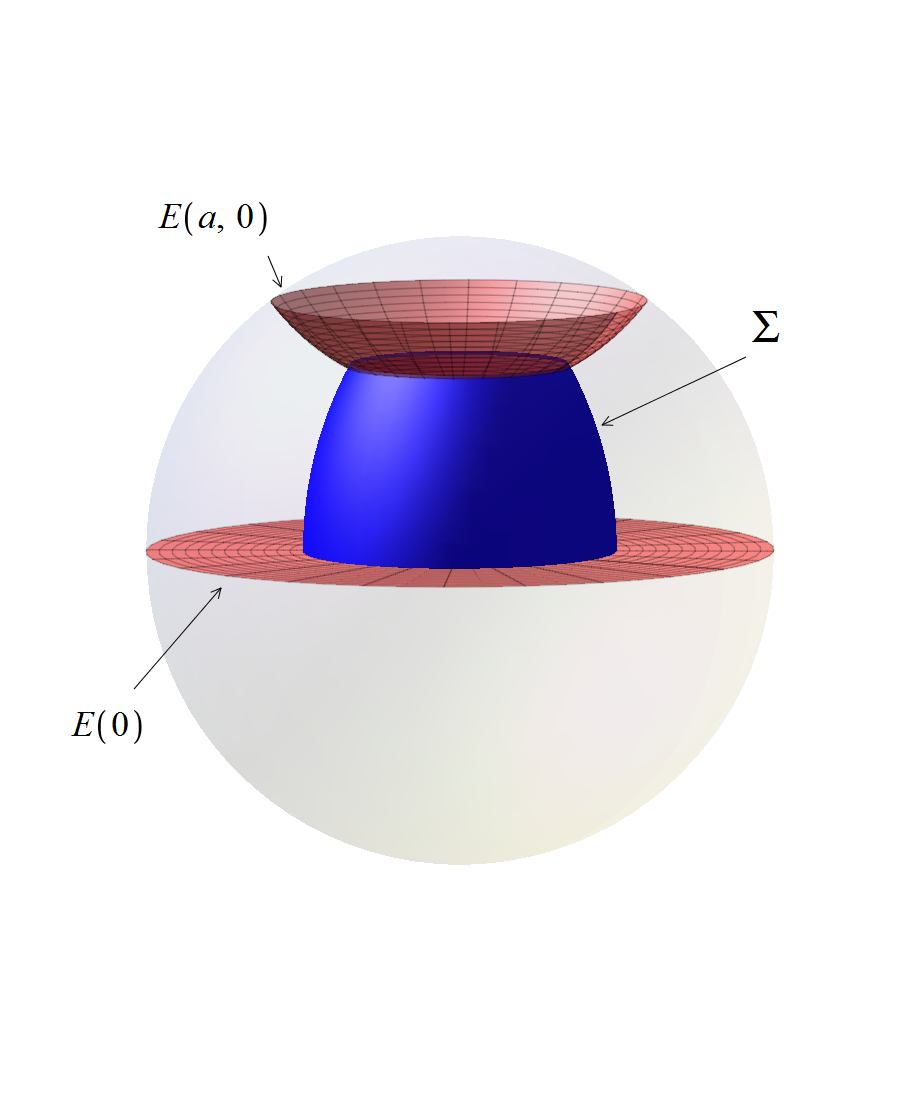}\label{gr4c1}}
   \hfill
   \subfloat[Extension $\tilde{\Sigma}$ of $\Sigma$ by reflection w.r.t. totally geodesic hypersurfaces. ]{\includegraphics[width=0.4\textwidth]{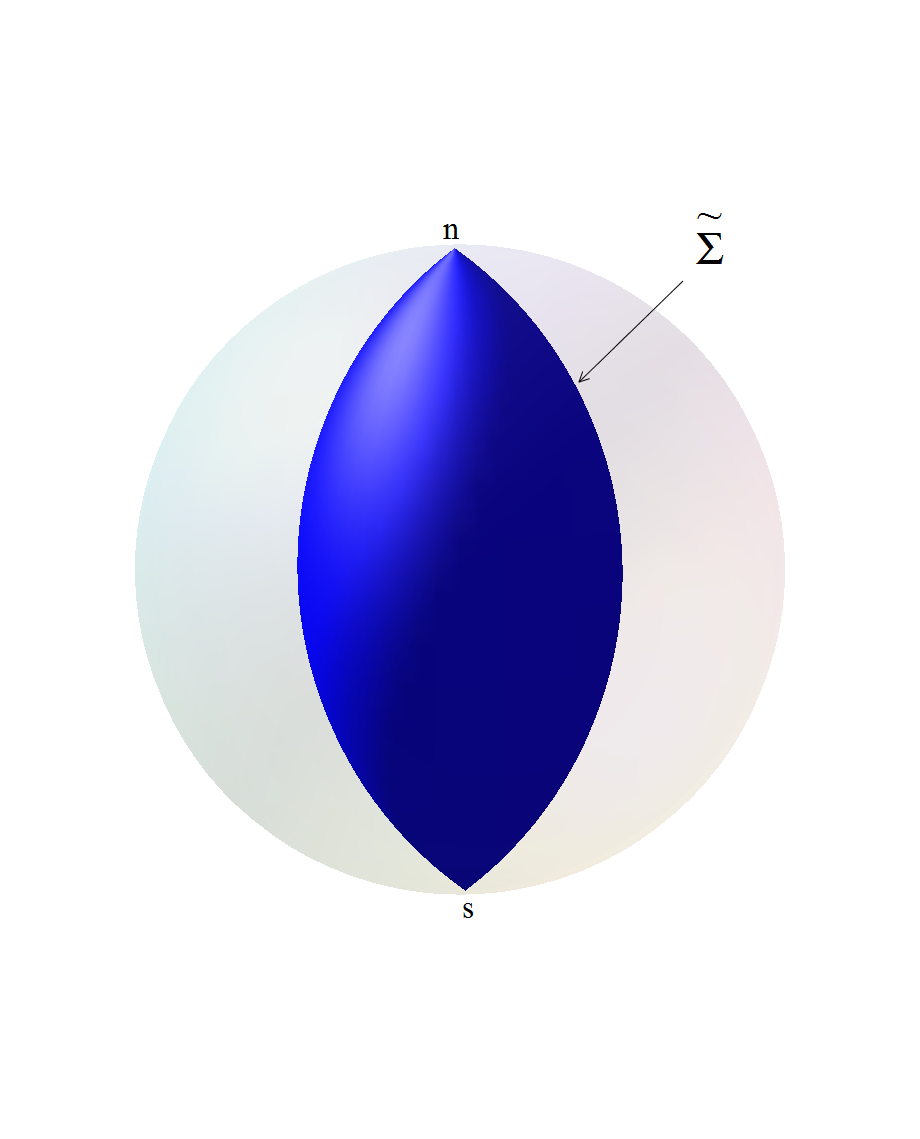}\label{gr4c2}}
   \caption{Extension of the compact hypersurface $\Sigma$.}\label{grap4c}
  \end{figure}

  Now, we show that it is unique up to dilations. Again, using the parallel flow, we can assume that the hypersurface $\Sigma _1$ associated to $g_{1}$ is an embedded horospherically concave hypersurface.

By Theorems \ref{Ch3:theo33} and \ref{Ch3:prop38}, we can suppose that the hypersurface is between the totally geodesic hypersurfaces $E(0)$ and $E(a,0)$, where $a=\left( \cot(r),\frac{1}{\sin(r)}{\bf n}\right)$. Also, the hypersurfaces along the parallel flow will remain in such slab as embedded horospherically concave hypersurfaces.

Let $\Sigma$ be the hypersurface associated to $g$ and $\Sigma _1^t$ the parallel flow of $\Sigma _1$. Since $\Sigma$ and $\Sigma_{1}$ are compact, there exists $t_0 \geq 0$ so that $\Sigma_{1}^{t_0}$ has no intersection with $\Sigma$. That is, in the Poincar\'{e} ball model,  $\Sigma_{1}^{t_0}$ is in the exterior of $\Sigma$, thus we can decrease $t$ until $\Sigma$ touches $\Sigma_{1}^{t}$ for the first time. We denote this first contact hypersurface as $\Sigma _1$. We have a contact point.

  We claim that there is a boundary contact point. If not, let $\rho_{1}$ be the support function of $\Sigma_{1}$ and $\rho$ be the support function of $\Sigma$. We have that there is $x\in \mathbb{A}(r)$ such that $\rho_{1}(x)=\rho(x)$, and, also,
  \begin{equation*}
    \rho_{1}>\rho \text{ on }\partial\mathbb{A}(r),
  \end{equation*}
  by \cite[Proposition 3.1]{YLi}, $\rho_{1}(x)>\rho(x)$, which is a contradiction.

  Even more, we claim that both components of $\partial\Sigma_{1} = \partial _1 \cup \partial _2 $ have a contact point with $\partial\Sigma = \tilde \partial _1 \cup \tilde \partial _2$, that is, $\partial _i \cap \tilde \partial _i \neq \emptyset$, $i=1,2$. Suppose that one component of $\partial\Sigma_{1}$ does not have a contact point with $\partial\Sigma$, we reflect both hypersurfaces with respect to the hyperplane that contains the component of $\partial\Sigma_{1}$ that touches $\partial\Sigma$. We have an extension $\Sigma_{1}'$ of $\Sigma_{1}$ and an extension $\Sigma'$ of $\Sigma$ such that they have an interior contact point and there is no boundary contact point. Then, repeating the arguments of the above paragraph, we get a contradiction.

Similar arguments show that at each level of $\mathbb{A}(r)$ there is a contact point between $\Sigma_{1}$ and $\Sigma$, i.e., for every $s\in(r,\pi/2)$ there is $x\in\mathbb{A}(r)$, with $d_{\s^{m}}(x,{\bf n})=s$, such that the support function of $\Sigma_{1}$ and the function support of $\Sigma$ take the same value at $x$. The proof is also by contradiction. If there is no such $x$, then we use the reflection to attain a contradiction as above.

  We know that $\rho_{1}$ and $\rho$ are rotationally invariant. Since at every level of $\mathbb{A}(r)$, there is a point at that level such that $\rho_{1}$ and $\rho$ are equal, we have that $\rho_{1}=\rho$ on that level, then $\rho_{1}=\rho$ on $\ov{ \mathbb{A}(r) }$. So, they are equal up to parallel flow. That is, $g_{1}$ is a dilation of $g$. This concludes the proof.
\end{proof}

In case that the above elliptic data admits a punctured solution then there is no solution of the above problem. Specifically:

\begin{theorem}\label{Ch4:Theo.4.3.5}
Set $r\in(0,\pi/2)$. If the degenerate elliptic data $(f,\Gamma)$ admits a punctured solution, then there is no solution of the following degenerate elliptic problem:
  \begin{equation*}
  \left\{
               \begin{array}{cccc}
                 f(\lambda(g))       &=& 0 & \quad\text{in } \ov{ \mathbb{A}(r) },          \\
                 h(g) &=& 0 & \quad\text{on }\partial \mathbb{A}(r).
               \end{array}
      \right.
\end{equation*}
\end{theorem}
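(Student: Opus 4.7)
The plan is to argue by contradiction: suppose $g_{1}=e^{2\rho_{1}}g_{0}$ solves the annulus problem on $\ov{\mathbb{A}(r)}$, and extract an impossibility by comparing it with the hypothesized punctured solution. By Theorem \ref{Ch4:Theo.4.3.2}, $g_{1}$ is rotationally invariant about ${\bf n}$, and Proposition \ref{proposition3.5} (with $c=0$) shows that its horospherically concave hypersurface $\Sigma_{1}\subset\mathbb{H}^{m+1}$ meets the two totally geodesic hyperplanes $E(0)$ and $E(a_{r},0)$ orthogonally. Following the reflection construction used in the proof of Theorem \ref{Ch4:Theo.4.3.2}, iterated isometric reflections of $\Sigma_{1}$ across these ultraparallel hyperplanes generate an infinite dihedral group of isometries of $\mathbb{H}^{m+1}$ fixing the axis joining ${\bf n}$ and ${\bf s}$; the orbit extends $\Sigma_{1}$ to a complete embedded horospherically concave hypersurface $\tilde{\Sigma}_{1}$ contained in an equidistant tube about that axis, with horospherical metric $\tilde g_{1}=e^{2\tilde\rho_{1}}g_{0}$ a rotationally invariant $C^{2,\alpha}$ solution of the degenerate problem on $\mathbb{S}^{m}\setminus\{{\bf n},{\bf s}\}$. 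In parallel, the given punctured solution $g_{p}$ is rotationally invariant by Corollary \ref{Ch4:Punc}, its hypersurface $\Sigma_{p}$ meets $E(0)$ orthogonally by minimality of $\partial\mathbb{S}^{m}_{+}$, and a single Schwarz reflection across $E(0)$ extends it to a rotationally invariant $C^{2}$ solution $\tilde g_{p}=e^{2\tilde\rho_{p}}g_{0}$ of $f(\lambda(\cdot))=0$ on the same punctured sphere.

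The decisive asymmetry between $\tilde g_{1}$ and $\tilde g_{p}$ is at the two poles. By Definition \ref{Ch4:Def:Punct} together with rotational symmetry, $\tilde\sigma_{p}:=e^{-\tilde\rho_{p}}$ admits a $C^{2}$ extension vanishing quadratically at both ${\bf n}$ and ${\bf s}$, so $\tilde\rho_{p}(x)\sim -2\log d(x,{\bf n})$ near ${\bf n}$. The iterated-reflection construction of $\tilde g_{1}$, on the other hand, only produces linear vanishing $\tilde\sigma_{1}(x)\sim c_{1}\, d(x,{\bf n})$ near each pole, as one can read off by tracking the conformal factors of the generator $\eta$ of the translation subgroup on $\mathbb{S}^{m}$: if $\mu\in(0,1)$ is its derivative at the attracting fixed point ${\bf n}$, then $\tilde\sigma_{1}(\eta^{n}(x_{0}))=\sigma_{1}(x_{0})\,\lambda_{\eta^{n}}(x_{0})\sim C\mu^{n}\sim d(\eta^{n}(x_{0}),{\bf n})$. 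Consequently
\begin{equation*}
H(x):=\tilde\rho_{1}(x)-\tilde\rho_{p}(x)=\log\frac{\tilde\sigma_{p}(x)}{\tilde\sigma_{1}(x)}\longrightarrow -\infty\quad\text{as }x\to{\bf n}\text{ or }{\bf s},
\end{equation*}
so the supremum $M:=\sup_{\mathbb{S}^{m}\setminus\{{\bf n},{\bf s}\}}H$ is finite and attained at some interior point $x_{0}$.

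Finally, invoke the parallel flow (Proposition \ref{Ch3:ParalProblem}): $\tilde\rho_{1,t}:=\tilde\rho_{1}+t$ still solves a degenerate problem with the same cone $\Gamma$, and the choice $t^{*}:=-M$ yields $\tilde\rho_{1,t^{*}}\le \tilde\rho_{p}$ on $\mathbb{S}^{m}\setminus\{{\bf n},{\bf s}\}$ with equality at $x_{0}$. Both sides solve $f(\lambda(\cdot))=0$ in a neighborhood of $x_{0}$, so Y.Y.~Li's strong maximum principle for degenerate elliptic equations (\cite[Proposition 3.1]{YLi}), the same tool already used in the proofs of Theorems \ref{Ch4:Theo.4.4} and \ref{Ch4:Theo.4.3.2}, forces $\tilde\rho_{1,t^{*}}\equiv\tilde\rho_{p}$ in a neighborhood of $x_{0}$, and then by connectedness of the domain on all of $\mathbb{S}^{m}\setminus\{{\bf n},{\bf s}\}$. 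This contradicts the differing polar growth rates of the two sides, ruling out the existence of $g_{1}$. The main obstacle I anticipate is rigorously pinning down the linear rate $\tilde\sigma_{1}\sim c_{1}\,d(\cdot,{\bf n})$ together with a matching upper bound, so that $H$ genuinely diverges to $-\infty$ rather than oscillating; the $C^{2,\alpha}$ regularity of $\tilde g_{1}$ across the reflection seams is a routine consequence of Schwarz reflection combined with the orthogonal incidence provided by Proposition \ref{proposition3.5}, and the propagation of equality from a single interior touching point to the whole connected component via \cite[Proposition 3.1]{YLi} is precisely the device used in the earlier theorems.
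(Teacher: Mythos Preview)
Your approach is correct and ends at the same contradiction via Li's Proposition~3.1, but the route differs from the paper's. The paper extends the annulus solution only to $\ov{\s^m_+}\setminus\{{\bf n}\}$ and stays in the hypersurface picture: it flows the punctured solution's hypersurfaces $Q(s)$ outward, uses that each $Q(s)$ admits a $C^1$ extension to ${\bf n}$ in the Poincar\'{e} model (with tangent plane parallel to $x_{m+1}=0$), and finds the first $s_1$ at which $Q(s_1)$ touches $\Sigma$; the contact cannot occur at infinity because $\Sigma$ is trapped inside an equidistant tube about the axis. After one reflection across $E(0)$ the touching is interior, and \cite[Proposition~3.1]{YLi} on a compact sub-annulus gives the contradiction. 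You instead extend both metrics all the way to $\s^m\setminus\{{\bf n},{\bf s}\}$ and replace the geometric ``tube'' and ``$C^1$-extension'' facts by the analytic growth comparison $\tilde\sigma_1\asymp d$ versus $\tilde\sigma_p=O(d^2)$ at the poles, which forces the supremum of $H$ to be interior. Your path is cleaner once the asymptotics are known, at the cost of having to justify them; the paper avoids the asymptotics entirely by exploiting the $C^1$ regularity of the hypersurface at ${\bf n}$.

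Two remarks. The quadratic upper bound on $\tilde\sigma_p$ indeed follows from rotational symmetry (Corollary~\ref{Ch4:Punc}) plus the $C^2$ extension, which forces the radial profile to satisfy $F(0)=F'(0)=0$. The two-sided linear bound on $\tilde\sigma_1$ is the step needing care; your dihedral-orbit sketch, combined with the observation that in stereographic coordinates from ${\bf s}$ the translation $\eta$ is literally the dilation $x\mapsto\mu x$, does make the ratio $\tilde\sigma_1/d$ bounded above and below on a fundamental domain for $\langle\eta\rangle$, hence everywhere. Finally, the form of \cite[Proposition~3.1]{YLi} used throughout the paper is not ``touching implies local equality'' but rather: if $\rho_1\ge\rho$ on $\ov\Omega$ with strict inequality on $\partial\Omega$, then strict inequality holds throughout $\Omega$. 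In your setup you should take $\Omega=\{\delta<d(\cdot,{\bf n})<\pi-\delta\}$ for $\delta$ small enough that $\tilde\rho_{1,t^*}<\tilde\rho_p$ on $\partial\Omega$ (available precisely because $H\to-\infty$ at the poles) and conclude $\tilde\rho_{1,t^*}<\tilde\rho_p$ at $x_0$, contradicting the touching directly; no propagation-of-equality step is needed.
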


\begin{proof}
We will prove this by contradiction. Suppose that there is a solution of the above problem. Arguing as in Theorem \ref{Ch4:Theo.4.3.2}, such solution can be extended to a solution $g=e^{2\rho}g_{0}$ on $\ov{ \S^{m}_{+} } \setminus \{ {\bf n} \}$. Let $\Sigma$ be the associated hypersurface to $g$. Using the parallel flow, we can assume that it is a horospherically concave hypersurface contained in the component $\mathcal C$ of $\h^{m+1} \setminus E(0)$ that contains ${\bf n}$.
\begin{figure}
  \centering
  \includegraphics[width=0.6\textwidth]{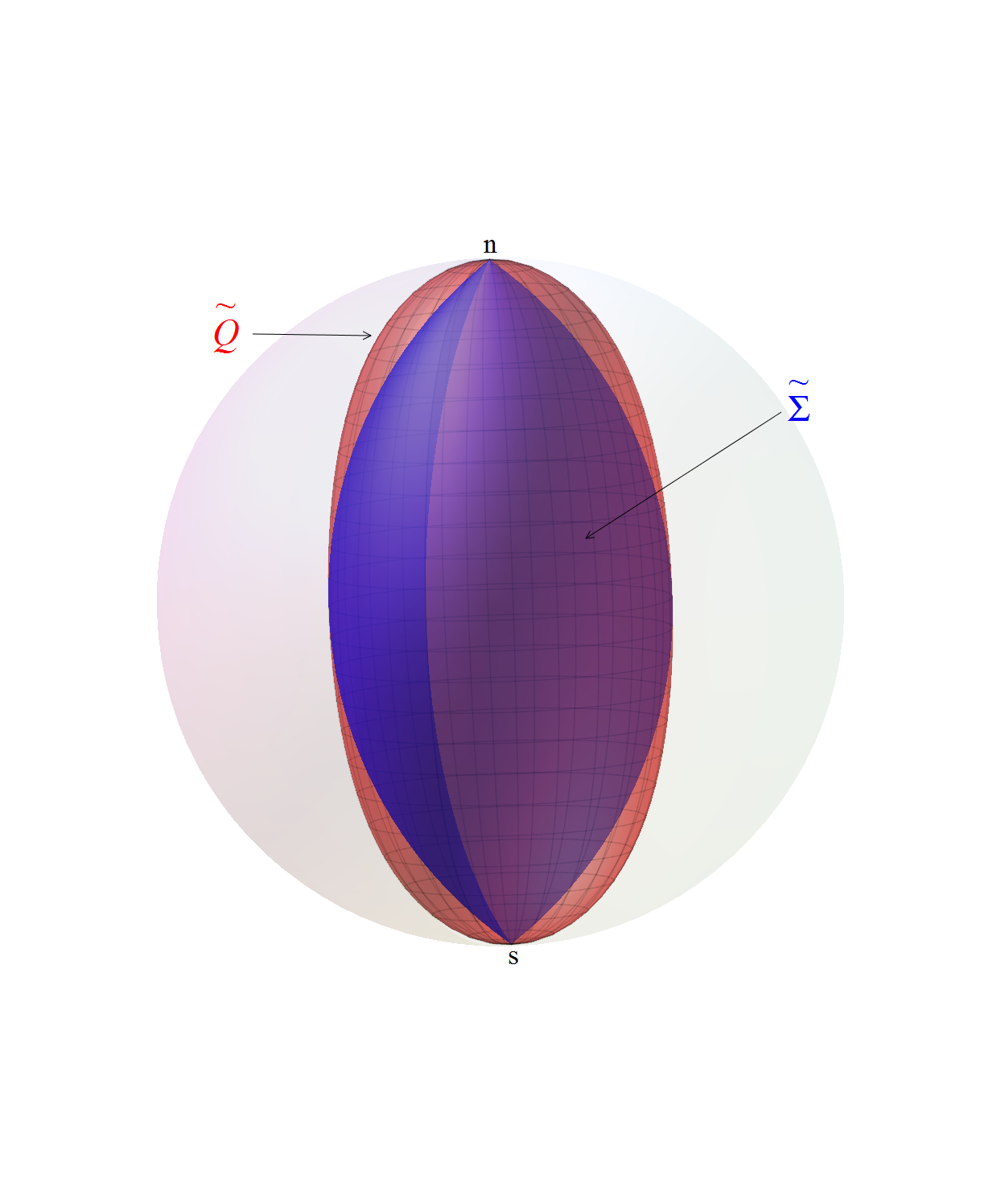}
  \caption{The first hypersurface $\tilde{Q}$ of a punctured solution \\
  that touches the hypesurface $\tilde{\Sigma}$.}\label{gr4d1}
\end{figure}

Let $g_{P}$ be a punctured solution of the problem \eqref{exequation}. Since $\partial \Gamma$ is a cone, the conformal metric $e^{2s}g_{P}$ is also a solution of the problem (\ref{exequation}), for every $s\in\R$.

There is $s_{0}>0$, such that for all $s\geq s_{0}$ the associated horospherical hypersurface $\phi_{s}$ associated to the  punctured solution $e^{2s}g_{P}$ is embedded and its interior is contained in the same component $\mathcal U$. The family of hypersufaces $\{ Q(s) \}_{s\geq s_{0}}=\{Im(\phi_{s})\}_{s\geq s_{0}}$ converges, in the Poincar\'{e} ball model, to the inclusion  $\ov{ \S^{m}_{+}}\setminus\{{\bf n }\} \hookrightarrow\S^{m}$.  Also, every $Q(s)$ admits a $C^{1}$-extension to ${\bf n}$ in the Poincar\'{e} ball model contained in $\R^{m+1}$ and their tangent hyperplanes at ${\bf n}$ are parallel to the hyperplane $x_{m+1}=0$.

There is a $t_{1}>0$ such that the associated hypersuface $\Sigma_{1}$ to $g_{1}=e^{2t_{1}}g$ intersects the family $\{ Q(s) \}_{s\geq s_{0}}$. Without loss of generality we assume that the associated hypersurface $\Sigma$ to $g$ intersects the family $\{ Q(s) \}_{s\geq s_{0}}$. Also, there is $s_{1}\geq s_{0}$ such that:
  \begin{enumerate}
    \item $Q(s_{1})\cap \Sigma\neq\emptyset$,
    \item for $s>s_{1}$ we have $Q(s)\cap\Sigma=\emptyset$.
  \end{enumerate}

Hence, we have found a first contact point. Observe that such contact point can not be at infinity, this follows since $\Sigma$ is contained in the interior of the equidistant to a geodesic and the family $Q(s)$ extends to ${\bf n}$. Now, if the first contact point is an interior point then $Q(s_1)$ and $\Sigma$ are tangent at such point. If the first contact point occurs on the boundary, $Q(s_{1})$ and $\Sigma$ are tangent at that point too, because all the hypersurfaces $Q(s)$ are orthogonal to the totally geodesic hypersurface $E(0)$ and $\Sigma$ is orthogonal to $E(0)$ too.

We reflect $Q(s_{1})$ and $\Sigma$ with respect to the totally geodesic hypersurface $E(0)$ and we obtain the horospherically concave hypersurfaces $\tilde{Q}$ and $\tilde{\Sigma}$ (cf. Figure \ref{gr4d1}). Their support functions are defined in $\s^{m}\setminus\{\bf n,s\}$. Let $\rho_{1}$ be the support function of $\tilde{Q}$ and $\tilde{\rho}$ be the support function of $\tilde{\Sigma}$. Since there is a contact point in the interior of $\tilde{Q}$, there is $0<\delta<\pi/2$ such that
\begin{equation*}
  \rho_{1}>\tilde{\rho} \text{ on }\partial\Omega,
\end{equation*}
where $\Omega=\{x\in\s^{m}:\delta<d_{\s^{m}}(x,{\bf n})<\pi-\delta\}$ and there exists  $x_{0}\in\Omega$ such that $\rho_{1}(x_{0})=\tilde{\rho}(x_{0})$, but this contradicts \cite[Proposition 3.1]{YLi}. This concludes the proof.
\end{proof}

\begin{figure}
  \centering
  \includegraphics[width=0.7\textwidth]{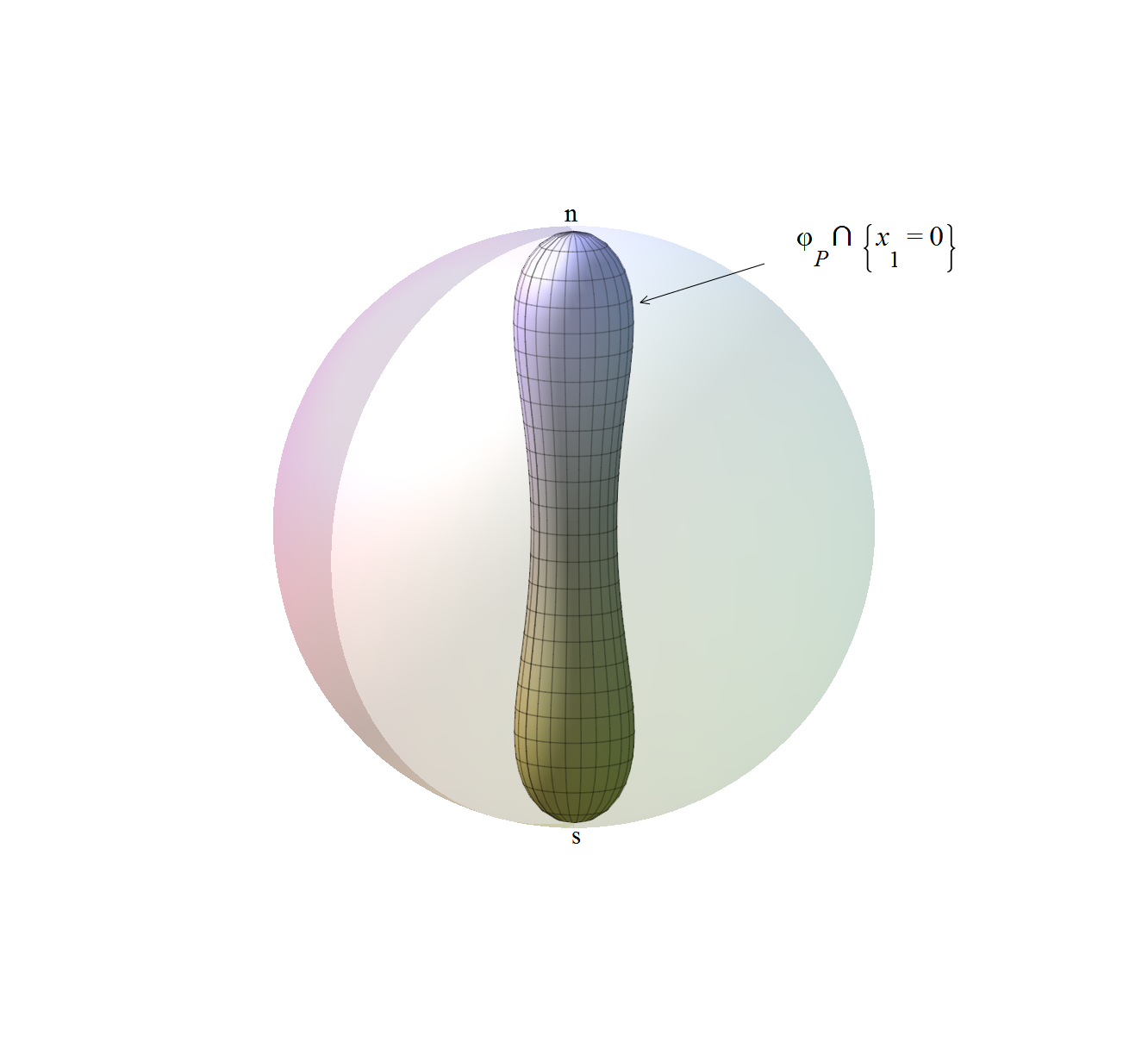}
  \caption{This is a slice of the hypersurface associated to $\sigma _k (\lambda _1 , \ldots ,\lambda _m) =cte$ using the Poincar\'{e} ball model for $m=3$ and $k=1$.}\label{equid}
\end{figure}

It is important to say that the $\sigma_{k}$-Yamabe problem in $\ov{ \S^{m}_{+} } \setminus\{ {\bf n} \}$ admits a punctured solution when $1\leq k <m/2$. That punctured solution is associated to
\begin{equation*}
 e^{-\rho (x)} = \sigma(x) =\left( (1+x_{m+1})^{\beta}+(1-x_{m+1})^{\beta} \right)^{\frac{1}{\beta}}, \quad x\in\ov{ \S^{m}_{+} }\setminus\{ {\bf n} \},
\end{equation*}
where $\beta=1-m/(2k)<0$, these solutions were constructed by S.-Y. A. Chang, Z. Han, and P. Yang \cite{Chang}. When $m$ is  even and $k=m/2$, the $\sigma_{k}$-Yamabe problem on the compact annulus has a solution  $g$ with $\sigma_{k}\left( \lambda(g) \right)=0$ and minimal boundary (cf. Figure \label{equid} when $m=3$ and $k=1$).

Also, it is good to say that the assumption on the existence of the punctured solution is not a necessary condition for the non-existence of solutions of degenerate problems in the compact annulus with minimal boundary. We have seen that punctured solutions are rotationally invariant (cf. Corollary \ref{Ch4:Punc}). Moreover, for the $\sigma_{k}$-Yamabe problem when $k>m/2$, there is no solution to the degenerate problem with minimal boundary and, also, there is no punctured solution of these problems (cf. \cite{Chang}).

%%%%%%%%%%%%%%%%%%%%%%%%%%%%%%%%%%%%%%%%%%%%%%%%%%%%%%%%%%%%%
%%%%%%%%%%%%%%%%%%%%%%%%%%%%%%%%%%%%%%%%%%%%%%%%%%%%%%%%%%%%%

\section{Noncompact annulus with boundary}\label{CompleteAnnulus}

Now we focus on different boundary conditions in the annulus. At one boundary component we will impose mild conditions on the metric and at the other we will impose constancy of the mean curvature of the conformal metric.

Our next result will say that any conformal metric $g=e^{2\rho}g_{0}$ in $$\mathbb{A}(r,\pi/2]:= \set{x \in \s ^m \, : \, \, r < d_{\s^m}(x,{\bf n}) \leq \pi /2}$$ satisfying certain property at its end and solution of a degenerate problem with non-negative constant mean curvature on its boundary, has unbounded Schouten tensor. In other words, we establish a non-existence result for degenerate (and non-degenerate) elliptic equations in $\mathbb{A}(r,\pi/2]$. Specifically,

\begin{theorem}\label{Ch4:Theo.4.3.6}
Let $r\in(0,\pi/2)$, $c\geq0$ be a non-negative constant and $g=e^{2\rho}g_{0}$ be a conformal metric in $\mathbb{ A }\left(r,\frac{\pi}{2}\right]$ that is solution of the following degenerate elliptic problem:
\begin{equation*}
  \left\{
               \begin{array}{cccc}
                 f(\lambda(g))       &=& 0 & \quad\text{in } \mathbb{A}(r,\pi/2],          \\
                 h(g) &=& c & \quad\text{on }\partial \s^{m}_{+}.
               \end{array}
      \right.
  \end{equation*}

If $e^{2\rho}+|\nabla\rho|^{2}:\mathbb{A}(r,\pi/2]\to\r$ is proper then $\lambda(g)$ is unbounded.
\end{theorem}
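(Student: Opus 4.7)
The plan is to argue by contradiction, extending to this non-compact setting the sliding-horosphere technique employed in the proof of Theorem \ref{Ch4:Theo.4.4} and closing the argument with the strong maximum principle of Y.Y. Li \cite[Proposition 3.1]{YLi} for degenerate fully nonlinear conformal equations.

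Assume that $\lambda(g)$ is bounded. Then by Proposition \ref{Ch3:ParalProblem} and the scaling identity \eqref{Ch3:paraleig}, there exists $t_0 > 0$ such that for every $t \geq t_0$ the rescaled metric $g_t := e^{2t}g$ has all Schouten eigenvalues strictly less than $1/2$ while remaining a solution of the same degenerate problem, now with boundary mean curvature $e^{-t}c \geq 0$. The Local Representation Theorem then produces a horospherically concave immersion $\phi_t : \mathbb{A}(r,\pi/2] \to \mathbb{H}^{m+1}$. Since the properness of $e^{2\rho}+|\nabla\rho|^{2}$ forces $\rho^{2}+|\nabla\rho|^{2} \to +\infty$ at every point of $\partial B_r({\bf n})$, Theorem \ref{Ch3:proper} guarantees that $\phi_t$ is proper. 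Applying Proposition \ref{Ch3:Prop10} and Theorem \ref{Ch3:prop38} (after enlarging $t$ if necessary), the hypersurface $\Sigma_t := \phi_t(\mathbb{A}(r,\pi/2])$ is embedded with $\partial \Sigma_t \subset E((0,{\bf n}),-e^{-t}c)$, and $\Sigma_t$ lies in the half-space $\mathcal{C}_t \subset \mathbb{H}^{m+1}$ bounded by this equidistant and containing ${\bf n}$ in its ideal boundary.

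The comparison family is the foliation $\{H(s)\}_{s \in \mathbb{R}}$ of $\mathbb{H}^{m+1}$ by horospheres with common point at infinity ${\bf s}$, parameterised by the Busemann function $\beta_{\bf s}$. By \eqref{lambdakappa} each $H(s)$ has Schouten tensor identically zero, so its support function $\rho_s \in C^\infty(\s^m \setminus \{{\bf s}\})$ is a trivial solution of $f(\lambda) = 0$ that, crucially, extends smoothly across $\partial B_r({\bf n})$. The boundary piece $\phi_t(\partial \s^m_+)$ is compact, while the non-compact end of $\Sigma_t$ accumulates at ideal boundary points lying in the closed northern hemisphere (since $\Sigma_t \subset \mathcal{C}_t$), all of which belong to $\s^m \setminus \{{\bf s}\}$. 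Hence $\beta_{\bf s}$ is bounded below on $\Sigma_t$ and tends to $+\infty$ along the end, so its infimum $s_1 := \inf_{\Sigma_t} \beta_{\bf s}$ is attained at a point $p^* \in \Sigma_t$. The horosphere $H(s_1)$ is then the first-contact horosphere with $\Sigma_t$. Assuming $p^*$ is an interior point, setting $x^* := G(p^*) \in \mathbb{A}(r,\pi/2]$ and $\rho_1 := \rho_{s_1}$, one has $\rho_1 \geq \rho + t$ on $\mathbb{A}(r,\pi/2]$ with equality at $x^*$, so \cite[Proposition 3.1]{YLi} forces $\rho_1 \equiv \rho + t$ throughout $\mathbb{A}(r,\pi/2]$. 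But $\rho_1$ extends smoothly to a neighbourhood of $\partial B_r({\bf n})$, so $e^{2\rho_1}+|\nabla\rho_1|^{2}$ is bounded there, contradicting the properness of $e^{2\rho}+|\nabla\rho|^{2}$.

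The principal obstacle is to exclude the case $p^* \in \phi_t(\partial\s^m_+)$. By Proposition \ref{proposition3.5}, along $\partial \Sigma_t$ the angle $\alpha$ between $\Sigma_t$ and $E(-e^{-t}c)$ satisfies $\cos\alpha = -e^{-t}c/\sqrt{1+e^{-2t}c^{2}} \leq 0$, which is strictly obtuse when $c>0$, so the angle argument used in Theorem \ref{Ch4:Theo.4.4} does not directly rule out a boundary first contact here. One expects to handle this either by a local curvature comparison at the putative boundary tangency between $H(s_1)$ (all principal curvatures equal to $1$) and $E(-e^{-t}c)$ (principal curvatures strictly smaller in absolute value), or by invoking a boundary Hopf-type variant of the degenerate strong maximum principle. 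The case $c=0$ is immediate: $\Sigma_t$ meets the totally geodesic hypersurface $E(0)$ orthogonally, so reflecting $\Sigma_t$ across $E(0)$ yields a smooth horospherically concave hypersurface without boundary to which the interior argument applies at once.
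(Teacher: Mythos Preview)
Your proof has a genuine gap that you yourself flag: with horospheres based at the south pole ${\bf s}$, the angle at $\partial\Sigma_t$ satisfies $\cos\alpha\le 0$ when $c\ge 0$, and you do not actually exclude a boundary first contact for $c>0$. Neither of your proposed remedies (a curvature comparison between $H(s_1)$ and $E(-e^{-t}c)$, or a boundary Hopf lemma for the degenerate equation) is carried out, and the second is not known to be available in this setting. So as written the argument is complete only for $c=0$.

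The paper closes this gap by a single, decisive change: it slides horospheres based at the \emph{north} pole ${\bf n}$ rather than at ${\bf s}$. Because the non-compact end of $\Sigma$ escapes to $\partial B_r({\bf n})\subset\partial_\infty\mathbb H^{m+1}$, one starts with small horoballs near ${\bf n}$ (which miss the extended image $\Phi(\overline{\mathbb A(r)})$ since $r>0$) and expands. With this direction of approach, the angle that $H(s_1)$ makes with $E(-c)$ and the angle that $\Sigma$ makes with $E(-c)$ lie on opposite sides, so a first contact on $\partial\Sigma$ is impossible for every $c\ge 0$ simultaneously, and no reflection is needed even in the case $c=0$.

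There is also a technical point in your endgame. Li's Proposition 3.1 is stated for bounded domains with \emph{strict} inequality on the boundary, and the conclusion the paper draws from it is a contradiction with the existence of an interior touching point, not the identity $\rho_1\equiv\rho+t$ that you invoke. The paper makes this rigorous by observing that $\phi_P^{-1}(H(s_1))$ is compact (the horosphere stays away from the ideal boundary except at ${\bf n}$, while $\Sigma$ is proper), hence contained in some $\mathbb A(r_1)$ with $r<r_1<\pi/2$, on whose boundary the strict inequality $\rho>\rho_0$ holds; then Li's result applies on $\overline{\mathbb A(r_1)}$ and yields the contradiction directly. Your argument on the non-compact domain $\mathbb A(r,\pi/2]$, concluding global coincidence, is not what that proposition provides.
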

\begin{proof}
The proof is by contradiction. Suppose that $\lambda(g)$ is bounded. Using the parallel flow we can assume that $\varphi_{P}: \mathbb{A}(r,\pi/2] \to\b^{m+1} \subset \r ^{m+1}$ is a proper horospherically concave hypersurface. Recall that this property is invariant under the parallel flow.

Consider the continuous extension $\Phi:\ov{ \mathbb{A}(r) }\to\r^{m+1}$ of $\varphi_{P}:\mathbb{A}(r,\pi/2]\to \b^{m+1} \subset \r^{m+1}$, defined by
\begin{equation*}
  \Phi(x)=\left\{\begin{array}{cc}
                   \varphi_{P}(x) & x\in\mathbb{A}(r,\pi/2],\\
                   x & x\in S_{r}(\bf n).
                 \end{array}
             \right.
\end{equation*}

We consider the foliation of $\h^{m+1}$ by horospheres $\{H(s)\}_{s\in\r}$ having the north pole $\{\bf n\}$ as the boundary at infinity, $s$ is the signed distance between $H(s)$ and the origin of the Poincar\'{e} ball model. Since $r>0$, we have that there is $s_{1}\in\r$ such that $H(s_{1})\cap Im(\Phi)\neq\emptyset$ and $H(s)\cap Im(\Phi)=\emptyset$ for $s>s_{1}$.

\begin{figure}[!h]
  \includegraphics[width=0.6\textwidth]{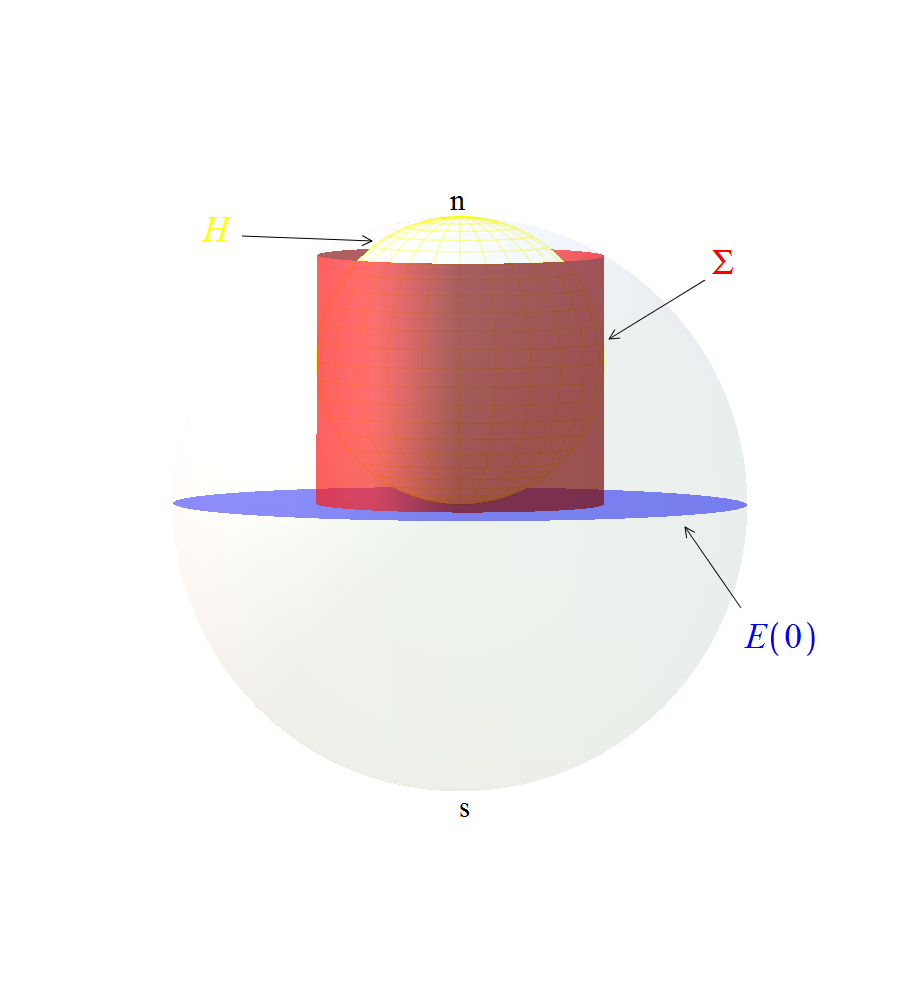}
  \caption{The horosphere $H$ touching the hypersurface $\Sigma$.}\label{gr4e1}
\end{figure}

Also, since $c\geq0$, the angle between $\Sigma=Im(\Phi)$ and the equidistant $E(-c)$ is acute and the angle between the horosphere $H=H(s_{1})$ and the equidistant is obtuse, hence the first contact point is at the interior of $\Sigma$ (cf. Figure \ref{gr4e1} when $c=0$). That is, there is $x\in \mathbb{A}(r)$ where $\phi_{P}(x)\in H$. Also $\phi_{P}^{-1}(H)\subset\mathbb{A}(r)$ is compact and there is $r<r_{1}<\pi/2$ such that
\begin{equation*}
  x\in\phi_{P}^{-1}(H)\subset\mathbb{A}(r_{1}).
\end{equation*}

Let $\rho_{0}$ be the support function associated to the horosphere $H$ restricted to $\ov{ \mathbb{A}(r_{1}) }$. Then, $\rho_{0}(x)=\rho(x)$ and
\begin{equation*}
  \rho>\rho_{0}\quad\text{on }\partial\mathbb{A}(r_{1}),
\end{equation*}
hence, by \cite[Proposition 3.1]{YLi}, $\rho(x)>\rho_{0}(x)$, which is a contradiction. That concludes the proof.
\end{proof}

A similar conclusion can be drawn for conformal metrics that are solutions of a non-degenerate elliptic problem that satisfy certain mild conditions.

\begin{theorem}\label{Ch4:Theo.4.3.7}
Let $0<r<\pi/2$, $c\in\r$ be a constant and $g=e^{2\rho}g_{0}$ be a conformal metric in $\mathbb{ A }\left(r,\frac{\pi}{2}\right]$ that is solution of the following non-degenerate elliptic problem:
\begin{equation*}
  \left\{
               \begin{array}{cccc}
                 f(\lambda(g))       &=& 1 & \quad\text{in } \mathbb{A}(r,\pi/2],          \\
                 h(g) &=& c & \quad\text{on }\partial \s^{m}_{+}, \\
                 \displaystyle{\lim_{x\to q}\rho(x)} & = & +\infty & \quad\forall\: q\in\partial B_{r}({\bf n}).
               \end{array}
      \right.
  \end{equation*}

Set $\sigma=e^{-\rho}$, if $|\nabla\sigma|^{2}$ is Lipschitz then $\nabla^{2} ( \sigma^{2} )$ is unbounded.
\end{theorem}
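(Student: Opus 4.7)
The plan is to argue by contradiction, assuming $\nabla^2(\sigma^2)$ is bounded on $\mathbb{A}(r,\pi/2]$. The overall scheme mirrors the proof of Theorem \ref{Ch4:Theo.4.3.6}, replacing the horospheres used there by umbilic comparison hypersurfaces adapted to the non-degenerate equation. First I would verify the two hypotheses of Theorem \ref{Ch3:theo33}: boundedness of $\nabla^2(\sigma^2)$ upgrades $\sigma^2$ to a $C^{1,1}$ function, the Lipschitz hypothesis on $|\nabla\sigma|^2$ is the other assumption outright, and the limit condition $\rho(x)\to +\infty$ as $x\to \partial B_r({\bf n})$ forces $\sigma = e^{-\rho}\to 0$ on $\partial B_r({\bf n})$, so $\sigma^2$ extends continuously by zero across that boundary component. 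By Theorem \ref{Ch3:theo33}, Proposition \ref{Ch3:Prop5} and Theorem \ref{Ch3:Theo3}, after replacing $\rho$ by $\rho + t$ for sufficiently large $t$, the associated horospherically concave hypersurface $\phi$ is embedded and proper; Theorem \ref{Ch3:prop38} further places its image in the half-space of $E(-e^{-t}c)$ that contains ${\bf n}$ in its ideal boundary. By Proposition \ref{Ch3:ParalProblem} the dilated metric solves the non-degenerate problem with data $(f_t,\Gamma)$, where $f_t(x)=f(e^{2t}x)$.

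Next I would construct the comparison family. In contrast to the degenerate case where horospheres serve (as solutions of $f(\lambda)=0$), here the comparison must be with solutions of $f_t(\lambda)=1$: these are the totally umbilical hypersurfaces of $\mathbb{H}^{m+1}$ of common principal curvature $\kappa^{*}$ determined uniquely by $f_t\!\left(\tfrac{1}{2}-\tfrac{1}{1+\kappa^{*}},\ldots,\tfrac{1}{2}-\tfrac{1}{1+\kappa^{*}}\right)=1$, using the homogeneity and strict monotonicity of $f_t$. Picking a one-parameter family $\{Q(s)\}_{s\in\mathbb{R}}$ of such umbilic hypersurfaces with a common axis through ${\bf s}$ (so that it foliates the appropriate region of $\mathbb{H}^{m+1}$ and avoids ${\bf n}$), I would slide them toward the compact embedded hypersurface $\phi(\overline{\mathbb{A}(r,\pi/2)})$ until the first value $s_{1}$ at which $Q(s_{1})$ touches $\phi$; the half-space placement from the first step prevents this contact from occurring at infinity.

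At the first contact point, if $\rho_{*}$ denotes the support function of $Q(s_{1})$, we have $\rho_{*}\geq \rho$ on $\mathbb{A}(r,\pi/2)$ with equality at the contact point, while $\rho \to +\infty$ on $\partial B_r({\bf n})$ yields $\rho_{*}<\rho$ strictly on that boundary component. Since both $\rho$ and $\rho_{*}$ solve $f_t(\lambda)=1$, the standard strong maximum principle for non-degenerate elliptic operators (cf. \cite{LiLi1, LiLi2}) forces $\rho_{*}\equiv \rho$ on $\mathbb{A}(r,\pi/2)$, contradicting the strict inequality on $\partial B_r({\bf n})$.

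The main obstacle is ruling out a first contact lying on $\partial \mathbb{S}^m_+$. In Theorem \ref{Ch4:Theo.4.3.6} the hypothesis $c\geq 0$ was used precisely to force the acute-angle condition keeping the first contact interior; here $c\in\mathbb{R}$ is arbitrary, so the angle between $\phi(\partial \mathbb{S}^m_+)\subset E(-e^{-t}c)$ (cf. Proposition \ref{Ch3:Prop10}) and the comparison family $Q(s)$ could well be obtuse. To handle this one must either deform the family $\{Q(s)\}$ to approach $\phi$ from a direction that precludes boundary contact, or appeal to a Hopf-type boundary maximum principle for the non-degenerate operator, using the constant-angle identity of Proposition \ref{proposition3.5} to control the boundary derivatives. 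This boundary analysis, together with the precise choice of axis and parameter for the foliation $\{Q(s)\}$, is the delicate step of the argument.
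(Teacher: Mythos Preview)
Your overall architecture---contradiction, parallel flow to get a proper embedded horospherically concave hypersurface, and comparison with totally umbilic spheres whose support functions solve the same non-degenerate equation---is exactly the paper's strategy. The gap is precisely the step you flag as ``the delicate step'': you do not resolve the boundary-contact issue, and neither of your two suggestions (deforming the foliation axis, or a Hopf boundary lemma using Proposition~\ref{proposition3.5}) is what the paper does.

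The paper's device is simpler and works for every $c\in\r$ with no angle analysis at all. Since $\partial\Sigma=\phi(\partial\s^{m}_{+})$ is compact (it sits in $E(-c)$), one first encloses it in a closed Euclidean ball $Q$ centered at the origin of the Poincar\'{e} model. Then one picks a \emph{small} umbilic sphere $Z_{0}$ (of the curvature $\kappa_{0}=(1+2\lambda_{0})/(1-2\lambda_{0})$ you computed) with center on the $x_{m+1}$-axis very close to ${\bf n}$, so that $Z_{0}\cap\Sigma=\emptyset$ and $Z_{0}\cap Q=\emptyset$. Instead of sliding $Z_{0}$ along an axis, one \emph{rotates} its center along the circle of radius $s$ about the origin; these rotations are hyperbolic isometries, so the moved sphere keeps the same Euclidean radius and the same distance $s$ from the origin and therefore stays disjoint from $Q$ throughout. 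Consequently the first contact with $\Sigma$ is automatically at an interior point, regardless of the sign of $c$. This is the missing idea.

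Two smaller points. First, your inequality at the first contact is reversed and internally inconsistent: with the sphere approaching from outside one gets $\rho\geq\rho_{0}$ on $\mathbb{A}(r,\pi/2]$ with equality at the contact point (not $\rho_{*}\geq\rho$), which is compatible with $\rho\to+\infty$ near $\partial B_{r}({\bf n})$. Second, the paper's final contradiction is phrased as: the strong maximum principle forces $\rho\equiv\rho_{0}$, so $\Sigma$ is a piece of a geodesic sphere, yet $\Sigma$ has nonempty ideal boundary $\partial B_{r}({\bf n})$; this is equivalent to your version.
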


\begin{proof}
The proof is by contradiction. We suppose that $\nabla^{2}\sigma^{2}$ is bounded. Using the parallel flow, we can assume that $\phi:\mathbb{A}(r,\pi/2]\to\h^{m+1}$ is a properly embedded horospherically concave hypersurface. Since $h(g)=c$, we have that the boundary $\partial\Sigma$ is contained in $E(-c)$.

Take a closed ball $Q$ centered at the origin of the Poincar\'{e} model of radius big enough so that $\partial\Sigma$ is in the interior of $Q$. Since $f$ is homogeneous of degree one and $f(1,\ldots,1)>0$, there is a constant $\lambda_{0}>0$ such that
\begin{equation*}
  f(\lambda_{0},\ldots,\lambda_{0})=1,
\end{equation*}
and using the parallel flow, we can assume that $0<\lambda_{0}<1/2$.

We work in the Poincar\'{e} ball model. Consider the family of totally umbilic spheres in the Hyperbolic space centered at the $x_{m+1}-$axis, $\{ Z(s)\}_{s\in(-1,1)}$, such that the principal curvatures are equal to
  \begin{equation*}
    k_{0}=\frac{1+2\lambda_{0}}{1-2\lambda_{0}}>1.
  \end{equation*}

Observe that the support function of all these totally umbilic spheres are solutions of the same non-degenerate elliptic problem.

Consider the continuous extension $\Phi:\ov{ \mathbb{A}(r) }\to\r^{m+1}$ of $\varphi_{P}:\mathbb{A}(r,\pi/2]\to\r^{m+1}$, defined by
\begin{equation*}
  \Phi(x)=\left\{\begin{array}{cc}
                   \varphi_{P}(x) & x\in\mathbb{A}(r,\pi/2],\\
                   x & x\in S_{r}(\bf n).
                 \end{array}
             \right.
\end{equation*}

Since $r>0$, there is a $\delta>0$ such that for all $s\in(1-\delta,1)$:
  \begin{enumerate}
  \item $Z(s)\cap\Sigma=\emptyset$,
  \item $Z(s)\cap Q = \emptyset$\label{ob2}.
  \end{enumerate}

We take one of them, say $Z_{0}=Z(s)$. In the Poincar\'{e} ball model,  consider the circle centered at the origin ${\bf 0}$ of radius $s$ and passing through the center of $Z_0$.  Move $Z_{0}$ along this circle until we have the first totally umbilic hypersurface $\tilde Z _0$ touching the hypersurface. By item \ref{ob2}, the contact point is at the interior (cf. Figure \ref{gr4f1} when $c=0$). That is, there is $x\in\mathbb{A}(r)$ such that $\varphi_{P}(x)\in \tilde Z_{0}$. At such contact point, the canonical orientations of $\Sigma $ and $\tilde Z_0$ agree.

\begin{figure}[!h]
  \includegraphics[width=0.8\textwidth]{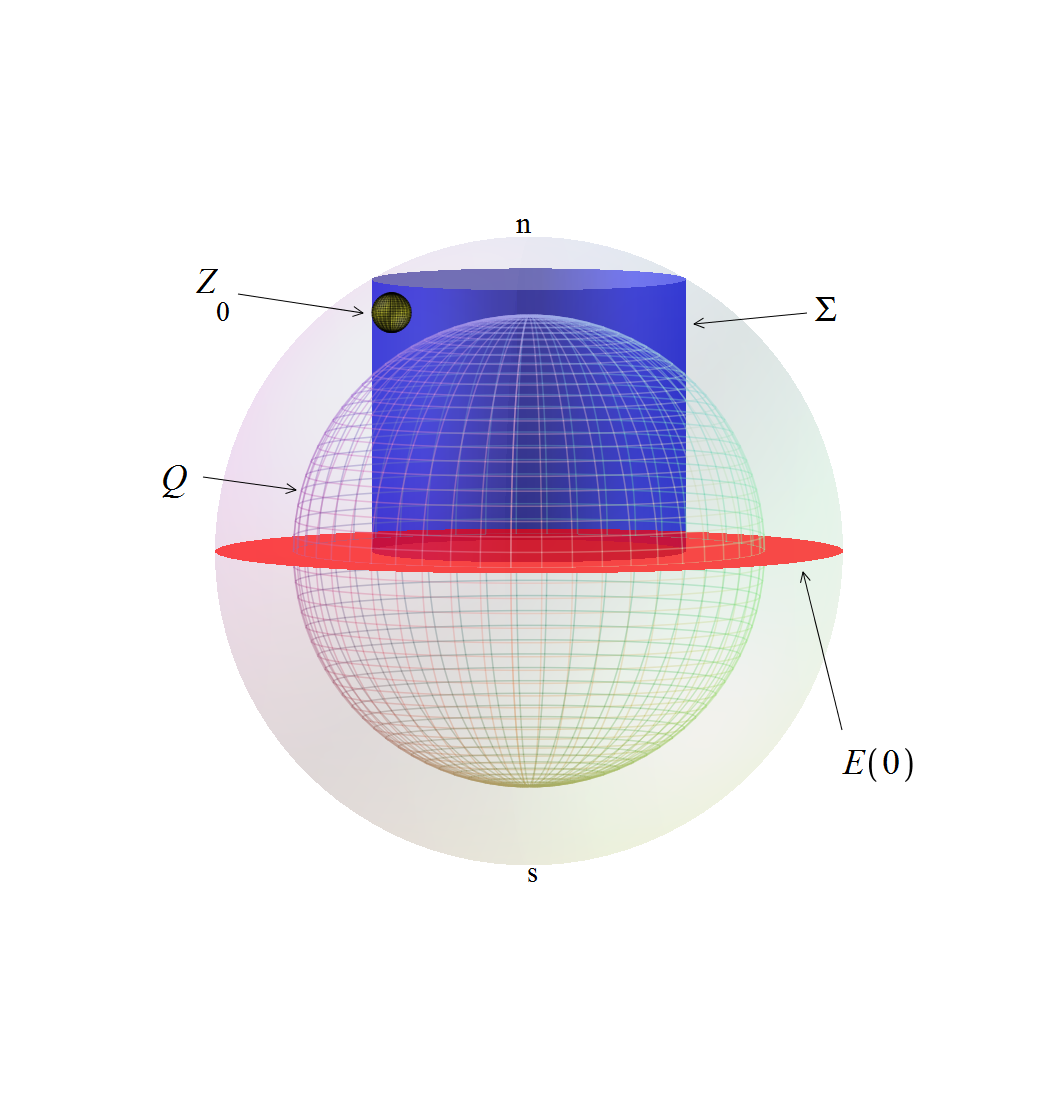}
  \caption{$\tilde Z_{0}$ touching at the interior of $\Sigma$, case $c=0$.}\label{gr4f1}
\end{figure}

Let $\rho_{0}$ be the support function of $Z_{0}$ restricted to $\mathbb{A}(r)$, then we have that
 \begin{equation*}
   \rho\geq\rho_{0} \text{ on }\mathbb{A}(r)\quad\text{ and }\quad\rho(x)=\rho_{0}(x),
 \end{equation*}
 then by the strong maximum principle, $\rho=\rho_{0}$. So, $\Sigma$ is part of a sphere, but $\Sigma$ has non-empty ideal boundary. This is contradiction, which concludes the proof.
\end{proof}

%%%%%%%%%%%%%%%%%%%%%%%%%%%%%%%%%%%%%%%%%%%%%%%%%%%%%%%%%%
%%%%%%%%%%%%%%%%%%%%%%%%%%%%%%%%%%%%%%%%%%%%%%%%%%%%%%%%%%

\section{The 2-dimensional case}\label{2dim}

As we have seen, the Schouten tensor is defined for Riemannian manifolds $(\man^{m},g_{0})$ when $m \geq 3$. Let us consider the conformal metric $g=e^{2\rho}g_{0}$, where $\rho\in C^{\infty}(\man)$, then we have the following relation:
\begin{equation*}
  \textrm{Sch}(g) +\nabla^{2}\rho+\frac{1}{2}|\nabla\rho|^{2}g_{0}=\textrm{Sch}(g_{0})+\nabla\rho\otimes\nabla\rho,
\end{equation*}
where $\nabla$ ,$\nabla ^{2}$ are the gradient and the hessian with respect the metric $g_{0}$ respectively, and $\abs{\cdot}$ the norm with respect of $g_{0}$.

In the case of the standard sphere $\left(\s^{m},g_{0} \right)$, we know that  $\textrm{Sch}(g_{0})=\frac{1}{2}g_{0}$, then for every conformal metric $g=e^{2\rho}g_{0}$, we have that
\begin{equation}\label{Ch4:Conf}
  \textrm{Sch}(g) +\nabla^{2}\rho+\frac{1}{2}|\nabla\rho|^{2}g_{0}=\frac{1}{2}g_{0}+\nabla\rho\otimes\nabla\rho .
\end{equation}

So, we can take the above expression as a definition of the Schouten tensor for a conformal metric to the standard one in domains of the sphere $\s^{2}$. Hence, we can consider Yamabe type problems in $\s^{2}$. The Yamabe problem is equivalent to
 \begin{equation*}
  \lambda_{1}+\lambda_{2}=\frac{1}{2} \text{ in } \S^{2},
\end{equation*}where $\lambda _i$, $i=1,2$, are the eigenvalues of the Schouten tensor given by \eqref{Ch4:Conf}. In other words, the conformal metric $g=e^{2\rho}g_{0}$ in $\S^{2}$ has constant scalar curvature $R(g)=1$ or, equivalently, constant Gaussian curvature, since from (\ref{Ch4:Conf})
\begin{equation*}
  \textrm{Tr}\left( g^{-1}\textrm{Sch}(g) \right)=e^{-2\rho}\left( 1-\Delta\rho \right)=\frac{1}{2}R(g) = K ,
\end{equation*}where $K$ is the Gaussian curvature of $g= e^{2\rho} g_{0}$, i.e., the Yamabe Problem reduces to the Liouville Problem.

This example says that the definition of the Schouten tensor for conformal metrics w.r.t. the standard metric in domains of the sphere $\s^{2}$, given by \eqref{Ch4:Conf}, makes sense. Then, we can consider more general elliptic problems for conformal metrics in $\s^{2}$.

We establish the analogous result we can obtain in the case of domains of $\s^{2}$ without proof. First, for geodesic disk we have:

\begin{theorem}
 Let $ (f , \Gamma )$ be a degenerate elliptic data for conformal metrics and let $c\leq 0$ be a constant. Then, there is no  conformal metric $g=e^{2\rho}g_{0}$ in $\overline{\mathbb{S}^{2}_{+} }$, $\rho\in C^{2,\alpha}\left(\overline{ \mathbb{S}^{2}_{+} }\right)$, satisfying
    \begin{equation*}
      \left\{
               \begin{array}{ccccl}
                 f( \lambda(g) )       &=& 0 & \text{ in } & \mathbb{S}^{2}_{+},          \\
                 h(g) &=& c & \text{ on } & \partial \mathbb{S}^{2}_{+},
               \end{array}
      \right.
    \end{equation*}where $\lambda(g)=(\lambda_{1},\lambda_{2})$ is composed by the eigenvalues of the Schouten tensor of the metric $g=e^{2\rho}g_{0}$.
\end{theorem}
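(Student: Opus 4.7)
The plan is to mimic, essentially verbatim, the proof of Theorem \ref{Ch4:Theo.4.4}, replacing $m\geq 3$ by $m=2$ and using the Schouten tensor as defined in \eqref{Ch4:Conf}. The only real point to check is that every ingredient invoked in the higher dimensional proof (representation formula, parallel flow, boundary location on an equidistant, and the comparison principle of Y.Y. Li) carries over to this two dimensional notion of Schouten tensor.

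First I would argue by contradiction: suppose $g=e^{2\rho}g_{0}$ on $\overline{\mathbb{S}^{2}_{+}}$ is a solution. Since $\overline{\mathbb{S}^{2}_{+}}$ is compact and $\Gamma$ is a cone, Proposition \ref{Ch3:ParalProblem} lets me replace $\rho$ by $\rho+t$ for $t$ large, so that all eigenvalues of $\mathrm{Sch}(g)$ lie below $1/2$ (the equations on $f(\lambda(g))=0$ and the boundary mean curvature transform in a controlled way, the new mean curvature $ce^{-t}$ remaining nonpositive). The Local Representation Theorem then produces a horospherically concave surface $\phi:\overline{\mathbb{S}^{2}_{+}}\to \mathbb{H}^{3}$, and the regularity hypotheses on $\sigma=e^{-\rho}$ needed in Theorems \ref{Ch3:theo33} and \ref{Ch3:prop38} are automatic because $\rho\in C^{2,\alpha}(\overline{\mathbb{S}^{2}_{+}})$ and the boundary has no ideal component; after one more parallel flow I may therefore assume $\phi$ is embedded and $\Sigma=\phi(\overline{\mathbb{S}^{2}_{+}})$ lies in the half-space of $\mathbb{H}^{3}\setminus E(-c)$ containing ${\bf n}$ at its ideal boundary, with $\partial \Sigma\subset E(-c)$.

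Next, I invoke Proposition \ref{proposition3.5}: along $\partial\Sigma$, the angle $\alpha$ between $\Sigma$ and $E(-c)$ satisfies $\cos\alpha=-c/\sqrt{1+c^{2}}\geq 0$. I now slide the foliation $\{H(s)\}_{s\in\mathbb{R}}$ of $\mathbb{H}^{3}$ by horospheres based at the south pole ${\bf s}$, parametrized by signed distance to the origin of the Poincar\'e ball model. For $s$ very negative, $\Sigma$ lies in the mean-convex side of $H(s)$; increasing $s$ I reach a first contact value $s_{1}$. Because $\partial\Sigma\subset E(-c)$ and the angle between $\Sigma$ and $E(-c)$ is acute or right (coming from $c\leq 0$), while any horosphere $H(s)$ meets $E(-c)$ with an obtuse angle on the ${\bf n}$-side, a boundary contact at $s_{1}$ is geometrically impossible, so the contact occurs at an interior point $x_{0}\in \mathbb{S}^{2}_{+}$.

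Finally, I denote by $\rho_{1}$ the support function on $\overline{\mathbb{S}^{2}_{+}}$ of the horosphere $H(s_{1})$, defined via the same representation formula \eqref{phi}; it solves the degenerate equation $f(\lambda(\cdot))=0$ because all eigenvalues of the Schouten tensor of a horosphere vanish identically (as noted in Subsection \ref{subsect1.6.1}). The first-contact property gives $\rho_{1}\geq \rho$ on $\overline{\mathbb{S}^{2}_{+}}$ with equality at the interior point $x_{0}$, while the strict inequality $\rho_{1}>\rho$ holds along $\partial\mathbb{S}^{2}_{+}$ (no boundary contact). Applying the degenerate strong maximum principle of Y.Y. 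Li, \cite[Proposition 3.1]{YLi}, to $\rho_{1}-\rho$ forces $\rho_{1}>\rho$ on all of $\overline{\mathbb{S}^{2}_{+}}$, contradicting $\rho_{1}(x_{0})=\rho(x_{0})$.

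The main obstacle is conceptual rather than technical: one must confirm that the representation formula \eqref{phi} and the relation $\lambda_{i}=\tfrac12-\tfrac{1}{1+\kappa_{i}}$, together with the results of Section \ref{SectLocal}, remain valid when ``Schouten tensor'' is understood in the two dimensional sense of \eqref{Ch4:Conf}; once that is granted, the geometric argument above (first-touch by a horosphere plus degenerate strong maximum principle) reproduces the proof of Theorem \ref{Ch4:Theo.4.4} word for word in dimension two.
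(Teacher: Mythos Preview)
The paper states this theorem \emph{without proof}, explicitly writing ``We establish the analogous result we can obtain in the case of domains of $\s^{2}$ without proof.'' Your proposal correctly supplies the omitted argument by adapting the proof of Theorem \ref{Ch4:Theo.4.4} to $m=2$ with the Schouten tensor defined via \eqref{Ch4:Conf}, which is precisely what the authors intend; the representation formula \eqref{phi}, the relation $\lambda_i=\tfrac12-\tfrac{1}{1+\kappa_i}$, and the results of Section \ref{SectLocal} are formulated purely in terms of $\rho$ and \eqref{Ch4:Conf}, so they carry over to $m=2$ without change.
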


Second, for compact annulus, we have the following non-existence result:

\begin{theorem}\label{Ch4:Theo.4.5.1}
If the problem (\ref{exequation}) with $m=2$ admits a punctured solution, then there is no solution of the following degenerate elliptic problem:
  \begin{equation*}
     \left\{
               \begin{array}{cccc}
                 f( \lambda(g) )       &=& 0 & \quad\text{in } \mathbb{A}(r),          \\
                 h(g) &=& 0 & \quad\text{on }\partial \mathbb{A}(r),
               \end{array}
      \right.
  \end{equation*}
  where $\lambda(g)=(\lambda_{1},\lambda_{2})$ is composed by the eigenvalues of the Schouten tensor of $g$.
\end{theorem}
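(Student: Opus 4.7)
The plan is to adapt the proof of Theorem \ref{Ch4:Theo.4.3.5} to the 2-dimensional setting, using the definition of the Schouten tensor given by \eqref{Ch4:Conf}. I argue by contradiction: assume that $g=e^{2\rho}g_{0}$ is a solution on $\overline{\mathbb{A}(r)}$ of the stated degenerate problem with minimal boundary. By the 2-dimensional analog of Theorem \ref{Ch4:Theo.4.3.2}, such a solution is rotationally invariant, and reflecting $g$ across the two minimal boundary components produces a $C^{2}$-extension to a solution on $\overline{\mathbb{S}^{2}_{+}}\setminus\{{\bf n}\}$ of the degenerate problem with minimal boundary on $\partial \mathbb{S}^{2}_{+}$ (the reflection argument used in Theorem \ref{Ch4:Theo.4.4.11} applies here since the gradient is normal to $\partial \mathbb{A}(r)$ by the minimality condition).

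Next, via the Local Representation formulas \eqref{Poincformul} and \eqref{Kleinformul}, which remain meaningful in dimension $m=2$, and after applying the parallel flow (Theorems \ref{Ch3:theo33} and \ref{Ch3:prop38}) to ensure embeddedness, I associate to the extended metric a properly embedded horospherically concave surface $\Sigma \subset \mathbb{H}^{3}$, contained in the component $\mathcal{C}$ of $\mathbb{H}^{3}\setminus E(0)$ whose ideal boundary contains ${\bf n}$. Take now the hypothesised punctured solution $g_{P}$. Since $\Gamma$ is a cone, the scaled metrics $e^{2s}g_{P}$ also solve the degenerate problem for every $s\in\mathbb{R}$, and the associated surfaces $Q(s)\subset \mathbb{H}^{3}$ are embedded for $s$ sufficiently large, admit $C^{1}$-extensions to ${\bf n}$ in the Poincar\'e ball model with tangent plane there parallel to $\{x_{3}=0\}$, and converge to the inclusion $\overline{\mathbb{S}^{2}_{+}}\setminus\{{\bf n}\}\hookrightarrow \mathbb{S}^{2}$ as $s\to\infty$.

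Starting from large $s$ and decreasing, locate the first value $s_{1}$ at which $Q(s_{1})\cap \Sigma \neq \emptyset$ while $Q(s)\cap \Sigma = \emptyset$ for all $s>s_{1}$. Because $\Sigma$ lies strictly inside an equidistant to a geodesic while the $Q(s)$ extend tangentially to ${\bf n}$, the first contact point cannot lie at infinity. If it is an interior point, $Q(s_{1})$ and $\Sigma$ are tangent there; if it lies on the boundary, both surfaces meet $E(0)$ orthogonally (by minimality) and are again tangent. Reflect $Q(s_{1})$ and $\Sigma$ across $E(0)$ to obtain surfaces $\tilde Q$ and $\tilde \Sigma$ with support functions $\rho_{1}$ and $\tilde\rho$ defined on $\mathbb{S}^{2}\setminus\{{\bf n},{\bf s}\}$. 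On a suitable annular domain $\Omega=\{x\in \mathbb{S}^{2}:\delta<d_{\mathbb{S}^{2}}(x,{\bf n})<\pi-\delta\}$ one has $\rho_{1}\geq \tilde\rho$ with equality at an interior point and strict inequality on $\partial \Omega$. Applying the degenerate maximum principle \cite[Proposition 3.1]{YLi}, valid for the extended Schouten tensor in this 2D setting, forces $\rho_{1}>\tilde\rho$ throughout $\Omega$, contradicting equality at the contact point.

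The main obstacle will be verifying that the machinery imported from $m\geq 3$ — namely the Local Representation Theorem, the preservation of properness and eventual embeddedness under the parallel flow, the angle analysis of Proposition \ref{proposition3.5}, and the degenerate maximum principle of Y.Y.\ Li — genuinely transfers to the 2-dimensional case once the Schouten tensor is defined via \eqref{Ch4:Conf}. This is essentially a bookkeeping check: the representation formulas \eqref{Poincformul}, \eqref{Kleinformul} are dimension-agnostic, and the relation $\lambda_{i}=\tfrac{1}{2}-\tfrac{1}{1+\kappa_{i}}$ between Schouten eigenvalues and principal curvatures of the associated surface in $\mathbb{H}^{3}$ follows the same computation, so the geometric comparison argument above proceeds unchanged.
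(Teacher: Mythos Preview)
Your proposal is correct and matches the paper's intent: the paper states Theorem \ref{Ch4:Theo.4.5.1} explicitly \emph{without proof}, indicating that the argument is the direct two-dimensional analog of Theorem \ref{Ch4:Theo.4.3.5}, which is precisely what you carry out. Your closing remark that the needed verification is the transfer of the representation formula, parallel flow properties, and Y.Y.\ Li's degenerate maximum principle to the $m=2$ setting via the Schouten tensor defined by \eqref{Ch4:Conf} is exactly the content the paper is implicitly assuming.
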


In this part, it is good to say that it is possible that the punctured solution in Theorem \ref{Ch4:Theo.4.5.1} might not exist. For example, the Yamabe problem, or Liouville Problem in the annulus $\ov{\mathbb{A}(r)}\subset \S^{2}$, $0<r<\pi/2$, has a solution with zero scalar curvature and minimal boundary, then there is no punctured solution for the Yamabe problem on $\ov{ \s^{2}_{+} }\setminus\{ {\bf n} \}$.

The solution of that problem is given by the conformal metric $g=e^{2\rho}g_{0}$ in $\ov{\mathbb{A}(r)}$, $0<r<\pi/2$, where (cf. Figure \ref{equid2})
\begin{equation*}
   e^{2\rho(x,y,z)}=\frac{1}{ \sigma^{2}(x,y,z) }=\frac{1}{ 1-z^{2} } \quad\text{for all }(x,y,z)\in \ov{\mathbb{A}(r)}.
\end{equation*}

\begin{figure}[!h]
  \centering
  \includegraphics[width=0.5\textwidth]{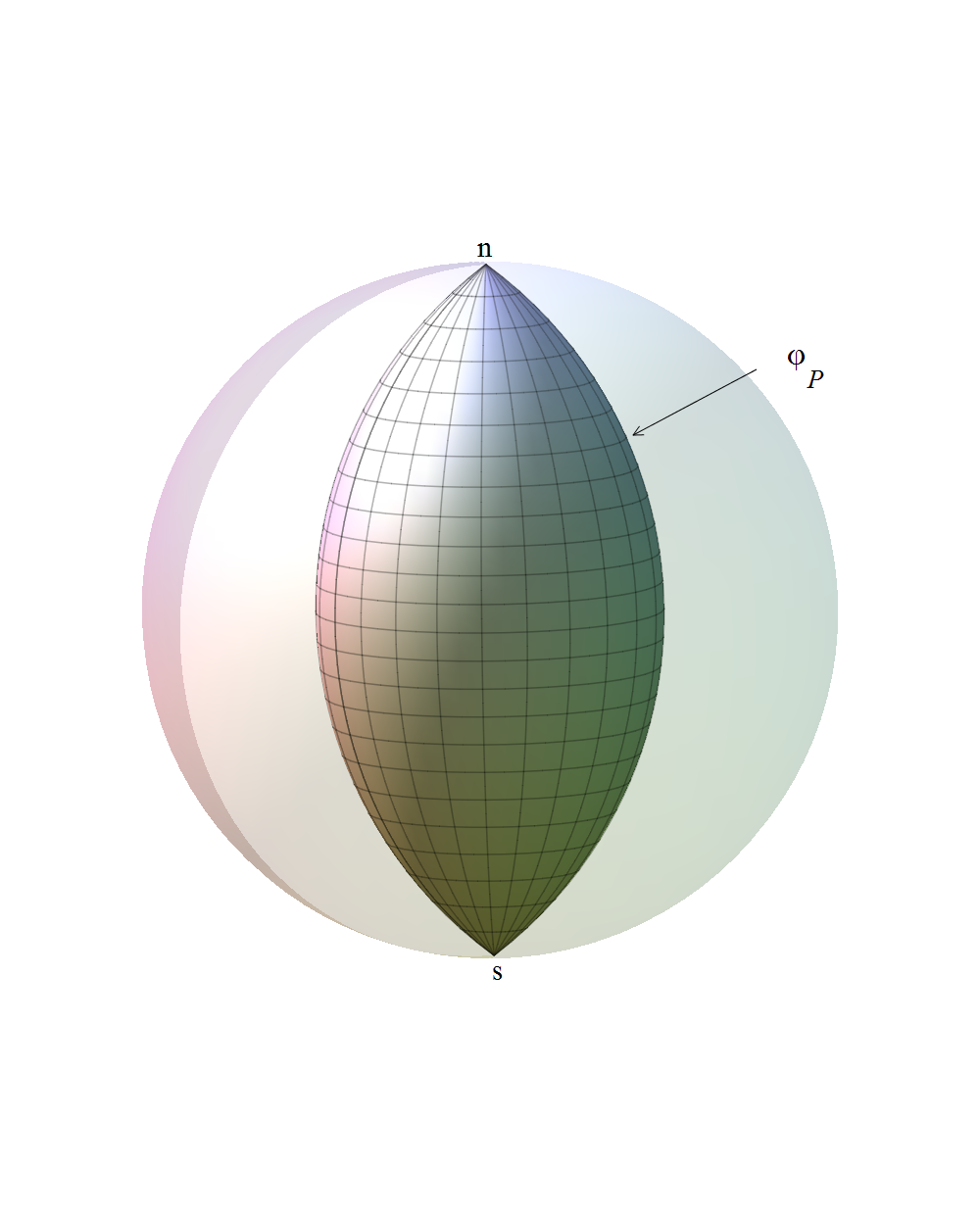}
  \caption{Surface associated to $\sigma=\frac{1}{3}\sqrt{1-z^2}$ in the Poincar\'{e} ball model.}\label{equid2}
\end{figure}

Also, in dimension $m>2$, we can define the conformal metric $g=e^{2\rho}g_{0}$ in $\ov{\mathbb{A}(r)}$, $0<r<\pi/2$, analogously, i.e.,
\begin{equation*}
   e^{2\rho(x_{1},\ldots,x_{m+1})}=\frac{1}{ \sigma^{2}(x_{1},\ldots,x_{m+1}) }=\frac{1}{ 1-x_{m+1}^{2} } \quad\text{for all }(x_{1},\ldots,x_{m+1})\in \ov{\mathbb{A}(r)},
\end{equation*}
but this conformal metric has constant scalar curvature equals to $(m-1)(m-2)>0$. When $m$ is even and $k=m/2$, this conformal metric is a solution of the degenerate $\sigma_{k}$-Yamabe problem in the compact annulus $\ov{ \mathbb{A}(r) }$ with minimal boundary. In the case of the complete annulus with boundary, we have

\begin{theorem}\label{Ch4:Theo.4.5.2}
Let $r\in(0,\pi/2)$, $c\geq0$ be a non-positive constant and $g=e^{2\rho}g_{0}$ be a conformal metric in $\mathbb{ A }\left(r,\frac{\pi}{2}\right)$ that is solution of the following degenerate elliptic problem:
\begin{equation*}
  \left\{
               \begin{array}{cccc}
                 f(\lambda(g))       &=& 0 & \quad\text{in } \mathbb{A}(r,\pi/2),          \\
                 h(g) &=& c & \quad\text{on }\partial \s^{m}_{+},
               \end{array}
      \right.
  \end{equation*}
  If $e^{2\rho}+|\nabla\rho|^{2}:\mathbb{A}(r,\pi/2]\to\r$ is proper then $\lambda(g)$ is not bounded.
\end{theorem}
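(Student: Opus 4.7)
The plan is to argue by contradiction following the same scheme as in Theorem \ref{Ch4:Theo.4.3.6}, with the higher-dimensional horospherical framework of Section \ref{SectLocal} replaced by its two-dimensional analogue sketched at the beginning of Section \ref{2dim}. So assume toward contradiction that $\lambda(g)$ is bounded on $\mathbb{A}(r,\pi/2)$. By the dilation property of Subsection \ref{sec3.6}, after replacing $g$ by $e^{2t_0}g$ for $t_0$ sufficiently large, I may assume both eigenvalues of ${\rm Sch}(g)$ lie strictly below $1/2$, so that the representation formula \eqref{Poincformul} produces a horospherically concave surface $\varphi_P:\mathbb{A}(r,\pi/2)\to\h^3$ in the Poincar\'e ball model.

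The first step is to verify that $\varphi_P$ is proper and, after further parallel translation, properly embedded. The hypothesis that $e^{2\rho}+|\nabla\rho|^2$ is proper together with Theorem \ref{Ch3:proper} (whose proof is dimension-free) yields properness of $\varphi_P$ at the inner boundary $\partial B_r({\bf n})$. Properness is preserved by the parallel flow (Proposition \ref{Ch3:Prop5}), and a further translation along the flow renders the surface embedded thanks to Theorem \ref{Ch3:theo33}. I would then extend $\varphi_P$ continuously across $\partial B_r({\bf n})$ by the identity map, producing a continuous $\Phi:\ov{\mathbb{A}(r)}\to\ov{\b^3}$ whose image stays at positive Euclidean distance from ${\bf n}$.

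Next, foliate $\h^3$ by horospheres $\{H(s)\}_{s\in\r}$ having common ideal point ${\bf n}$, parametrised by signed distance to the origin of the ball. Since ${\rm Im}(\Phi)$ avoids a neighbourhood of ${\bf n}$, there is a largest $s_1$ with $H(s_1)\cap{\rm Im}(\Phi)\neq\emptyset$. The decisive point is to locate this first contact. By Proposition \ref{proposition3.5} (applied in the two-dimensional setting using the Schouten tensor defined in \eqref{Ch4:Conf}), along $\varphi_P(\partial\s^2_+)$ the surface $\Sigma={\rm Im}(\varphi_P)$ meets the equidistant $E(-c)$ at the constant angle whose cosine equals $-c/\sqrt{1+c^2}\leq 0$, while every horosphere $H(s)$ meets $E(-c)$ at an obtuse angle on the ${\bf n}$-side. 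Hence the first contact cannot occur on $\varphi_P(\partial\s^2_+)$ and must be an interior tangency $\varphi_P(x)\in H(s_1)$ with $x\in\mathbb{A}(r,\pi/2)$.

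Finally, choose $r_1\in(r,\pi/2)$ so that the compact set $\varphi_P^{-1}(H(s_1))$ lies inside $\mathbb{A}(r_1)$, and let $\rho_1$ denote the support function of $H(s_1)$ restricted to $\ov{\mathbb{A}(r_1)}$, which itself solves $f(\lambda(\cdot))=0$ since horospheres have vanishing Schouten eigenvalues. Then $\rho\geq\rho_1$ on $\ov{\mathbb{A}(r_1)}$ with equality at $x$ and strict inequality along $\partial\mathbb{A}(r_1)$, so the degenerate comparison principle \cite[Proposition 3.1]{YLi} forces $\rho>\rho_1$ at $x$, a contradiction. The main obstacle is confirming that the whole horospherical machinery of Sections \ref{SectLocal} and \ref{Sec3.4}—the representation formula, parallel-flow invariance, the equidistant boundary description, and above all the comparison principle of \cite{YLi}—remains valid for $m=2$ with the Schouten tensor defined by \eqref{Ch4:Conf}. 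This is essentially the content of Section \ref{2dim}; once granted, the argument above transposes the proof of Theorem \ref{Ch4:Theo.4.3.6} verbatim.
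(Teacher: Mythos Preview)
Your proposal is correct and is precisely the argument the paper has in mind: the paper states Theorem \ref{Ch4:Theo.4.5.2} explicitly ``without proof'' as the two-dimensional analogue of Theorem \ref{Ch4:Theo.4.3.6}, and your write-up is a faithful transposition of that proof using the Schouten tensor defined by \eqref{Ch4:Conf}. One small remark: in your angle discussion you record $\cos\alpha=-c/\sqrt{1+c^2}\le 0$, whereas the proof of Theorem \ref{Ch4:Theo.4.3.6} phrases the same geometric fact as ``the angle between $\Sigma$ and $E(-c)$ is acute''; these are consistent (angle between normals versus angle between tangent planes), but you may want to align the wording to avoid confusion.
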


%%%%%%%%%%%%%%%%%%%%%%%%%%%%%%%%%%%%%%%%%%%%%%%%%%%%%%%%%%
%%%%%%%%%%%%%%%%%%%%%%%%%%%%%%%%%%%%%%%%%%%%%%%%%%%%%%%%%%

\bibliographystyle{amsplain}

\end{document}